\newtheorem{theorem}{Theorem}[section]
\newtheorem{lemma}[theorem]{Lemma}
\newtheorem{corollary}[theorem]{Corollary}
\theoremstyle{definition}
\newtheorem{remark}[theorem]{Remark}
\newcommand{\R}{\mathbb R}%
\newcommand{\C}{\mathbb C}%
\newcommand{\N}{\mathbb N}%
\newcommand{\Sa}{\mathscr A}%
\newcommand{\z}{\mathfrak z}%
\newcommand{\mv}{\mathfrak v}%
\newcommand{\J}{\mathscr J}%
\newcommand{\F}{\mathscr F}%
\numberwithin{equation}{section}
\renewcommand\subsubsection{\@secnumfont}{\bfseries}%
\renewcommand\subsubsection{\@startsection{subsubsection}{3}
  \z@{.5\linespacing\@plus.7\linespacing}{-.5em}%
  {\normalfont\bfseries}}
\begin{document}

\title[Pointwise convergence along general curves]{Pointwise convergence of solutions of the Schr\"odinger equation along general curves on Damek-Ricci spaces}

\author[Utsav Dewan]{Utsav Dewan}
\address{Stat-Math Unit, Indian Statistical Institute, 203 B. T. Rd., Kolkata 700108, India}
\email{utsav97dewan@gmail.com \\
utsav\_r@isical.ac.in}

\subjclass[2020]{Primary 35J10, 43A85; Secondary 22E30, 43A90}

\keywords{Pointwise convergence, Tangential curves, Schr\"odinger equation, Damek-Ricci spaces, Radial functions.}

\begin{abstract}
One of the most celebrated problems in Euclidean Harmonic analysis is the Carleson's problem: determining the optimal regularity of the initial condition $f$ of the
Schr\"odinger equation given by
\begin{equation*}
\begin{cases}
	 i\frac{\partial u}{\partial t} =\Delta u\:,\:  (x,t) \in \mathbb{R}^n \times \mathbb{R} \\
	u(0,\cdot)=f\:, \text{ on } \mathbb{R}^n \:,
	\end{cases}
\end{equation*}
in terms of the index $\beta$ such that $f$ belongs to the inhomogeneous Sobolev space $H^\beta(\mathbb{R}^n)$, so that the solution of the Schr\"odinger operator $u$ converges pointwise to $f$, 
\begin{equation*}
\displaystyle\lim_{t \to 0+} u(x,t)=f(x), \text{ almost everywhere}.
\end{equation*}
Recently, the author considered the Carleson's problem for the Schr\"odinger equation with radial initial data on Damek-Ricci spaces and obtained the sharp bound up to the endpoint $\beta \ge 1/4$.

\medskip

Interpreting the above as convergence along vertical lines, in this article, we consider the problem of pointwise convergence via more general approach paths. By constructing a counter-example on the $3$-dimensional Real Hyperbolic space, we show that the solutions of the Schr\"odinger equation, unlike Harmonic functions or solutions of the Heat equation, do not admit any natural wide approach region. We then study their pointwise convergence properties on Damek-Ricci spaces along general curves that satisfy certain H\"older conditions and bilipschitz conditions in the distance from the identity and again obtain the sharp bound up to the endpoint $\beta \ge 1/4$. Certain Euclidean analogues are also obtained.
\end{abstract}

\maketitle
\tableofcontents

\section{Introduction}
One of the most celebrated problems in Euclidean Harmonic analysis is the Carleson's problem: determining the optimal regularity of the initial condition $f$ of the
Schr\"odinger equation given by
\begin{equation*}
\begin{cases}
	 i\frac{\partial u}{\partial t} =\Delta u\:,\:\:\:  (x,t) \in \mathbb{R}^n \times \mathbb{R}\:, \\
	u(0,\cdot)=f\:, \text{ on } \mathbb{R}^n \:,
	\end{cases}
\end{equation*}
in terms of the index $\beta$ such that $f$ belongs to the inhomogeneous Sobolev space $H^\beta(\mathbb{R}^n)$, so that the solution of the Schr\"odinger operator $u$ converges pointwise to $f$, 
\begin{equation*}
\displaystyle\lim_{t \to 0+} u(x,t)=f(x)\:,\:\text{ almost everywhere}.
\end{equation*}
Such a problem was first studied by Carleson \cite{C} and Dahlberg-Kenig \cite{DK} in dimension $1$, followed by several other experts in the field for arbitrary dimension (see \cite{Cowling, Sjolin, Vega, Bourgain, DGL, DZ} and references therein).

Recently, the author studied the Carleson's problem for solutions of the Schr\"odinger equation corresponding to the Laplace-Beltrami operator in the setting of Damek-Ricci spaces $S$ \cite{Dewan}. They are also known as Harmonic $NA$ groups. These spaces $S$ are non-unimodular, solvable extensions of Heisenberg type groups $N$, obtained by letting $A=\R^+$ act on $N$ by homogeneous dilations. The rank one Riemannian Symmetric spaces of noncompact type are the prototypical examples of (and in fact accounts for a very small subclass of the more general class of) Damek-Ricci spaces \cite[p. 643]{ADY}. Let $\Delta$ be the Laplace-Beltrami operator
on $S$ corresponding to the left-invariant Riemannian metric. Its $L^2$-spectrum is the half line $(-\infty,  -Q^2/4]$, where $Q$ is the homogeneous dimension of $N$. The Schr\"odinger equation on $S$ is given by
\begin{equation} \label{schrodinger}
\begin{cases}
	 i\frac{\partial u}{\partial t} =\Delta u\:,\:\:\:  (x,t) \in S \times \R \:,\\
	u(0,\cdot)=f\:,\: \text{ on } S \:.
	\end{cases}
\end{equation}
Then for a radial function $f$ belonging to the $L^2$-Schwartz class $\mathscr{S}^2(S)_o$ (for the definition, see (\ref{schwartz_defn})),
\begin{equation} \label{schrodinger_soln}
S_t f(x):= \int_{0}^\infty \varphi_\lambda(x)\:e^{it\left(\lambda^2 + \frac{Q^2}{4}\right)}\:\widehat{f}(\lambda)\: {|{\bf c}(\lambda)|}^{-2}\: d\lambda\:,
\end{equation}
is the solution to (\ref{schrodinger}), where $\varphi_\lambda$ are the spherical functions, $\widehat{f}$ is the Spherical Fourier transform of $f$ and ${\bf c}(\cdot)$ denotes the Harish-Chandra's ${\bf c}$-function. 

To quantify the Carleson's problem on $S$, for $\beta \ge 0$, we recall the fractional $L^2$-Sobolev spaces on $S$ for the special case of radial  functions \cite{APV}:
\begin{equation} \label{sobolev_space_defn}
H^\beta(S):=\left\{f \in L^2(S): {\|f\|}_{H^\beta(S)}:= {\left(\int_0^\infty {\left(\lambda^2 + \frac{Q^2}{4}\right)}^\beta {|\widehat{f}(\lambda)|}^2 {|{\bf c}(\lambda)|}^{-2} d\lambda\right)}^{1/2}< \infty\right\}.
\end{equation}

In \cite{Dewan}, we have proved that if $f$ is radial and belongs to the Sobolev space $H^\beta(S)$  for $\beta \ge 1/4$, then the solution to the Schr\"odinger equation with initial data $f$ converges to $f$, that is,
\begin{equation} \label{vertical_convergence}
\displaystyle\lim_{t \to 0+} S_tf(x)=f(x)\:,
\end{equation} 
for almost every $x \in S$, with respect to the left Haar measure. Moreover, the sharpness of the endpoint $\beta=1/4$ has been established in a subsequent paper \cite{DR}. Interpreting the convergence (\ref{vertical_convergence}) as convergence along vertical lines (in the product space $S \times (0,\infty)$), it is natural to ask what happens when the problem of (almost everywhere) pointwise convergence is considered along a wider approach region, for instance, the non-tangential convergence. 

For $\R^n$, this question regarding non-tangential convergence has been already addressed. On $\R^n$, for $\beta >n/2$, by Sobolev imbedding and standard arguments, the non-tangential convergence to the initial data holds. However, Sj\"ogren and Sj\"olin showed in \cite{SS} that the non-tangential convergence fails for $\beta \le n/2$. Now as their counter-example is a non-radial function, it is natural to ask when the initial data is more symmetric, say radial, then can we have pointwise convergence along wider approach regions. We answer this question in the negative by constructing a radial initial data on the $3$-dimensional Real Hyperbolic space $\mathbb{H}^3 \cong SL(2,\C)/SU(2)$, whose Schr\"odinger propagation blows up on geodesic annuli  along wide approach regions: 
\begin{theorem} \label{counter-example}
Assume that $\gamma :[0,\infty) \to [0,\infty)$ is a strictly increasing continuous function with $\gamma(0)=0$. Then given any compact geodesic annulus $\Sa \subset \mathbb{H}^3$, centered at the identity, there exists radial $f \in H^{1/2}(\mathbb{H}^3)$ such that its Schr\"odinger propagation $u$ is continuous in $\Sa \times (0,\infty)$ and
\begin{equation*}
\displaystyle\limsup_{\substack{(y,t) \to (x,0)\\ d(x,y)<\gamma(t),\:t>0}} |u(y,t)| = +\infty \:,\:\text{ for all } x \in \Sa \:.
\end{equation*}
\end{theorem}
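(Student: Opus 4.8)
\emph{Proof proposal.} The plan is to reduce the statement to a one--dimensional oscillatory--integral construction, using the explicit spherical analysis of $\mathbb{H}^3=SL(2,\C)/SU(2)$: here $Q=2$, $\varphi_\lambda(r)=\dfrac{\sin\lambda r}{\lambda\sinh r}$ and $|{\bf c}(\lambda)|^{-2}=c\lambda^2$, so that, with $r=d(o,x)$ and $G(\lambda):=\lambda\,\widehat f(\lambda)$, a radial $f$ satisfies
\begin{equation*}
S_tf(r)=\frac{c\,e^{it}}{\sinh r}\int_0^\infty \sin(\lambda r)\,e^{it\lambda^2}\,G(\lambda)\,d\lambda,\qquad \|f\|_{H^{1/2}(\mathbb{H}^3)}^2=c\int_0^\infty (\lambda^2+1)^{1/2}|G(\lambda)|^2\,d\lambda .
\end{equation*}
On $\Sa=\{R_1\le r\le R_2\}$ the factor $(\sinh r)^{-1}$ is bounded above and below, so it suffices to produce $G$ for which the oscillatory integral blows up near every $r\in[R_1,R_2]$ along the given region while $\int(\lambda^2+1)^{1/2}|G|^2<\infty$. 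The corresponding Cauchy--Schwarz bound involves $\int(\lambda^2+1)^{-1/2}d\lambda$, which diverges \emph{logarithmically}; $1/2$ is critical, and the counter-example must exploit this logarithm.

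The building block, for parameters $t_0>0$, $\rho_0\in[R_1,R_2]$, $1\ll\Lambda<\Lambda'$, is $G_0(\lambda)=\dfrac{a_0}{\lambda}e^{-it_0\lambda^2+i\lambda\rho_0}\chi(\lambda)$, $\chi$ a smooth cutoff $\approx\mathbf 1_{[\Lambda,\Lambda']}$, corresponding to a radial $f_0$ with $\lambda\,\widehat{f_0}=G_0$. The chirp $e^{-it_0\lambda^2}$ cancels the Schr\"odinger phase at time $t_0$, and at distance $\rho_0$ the identity $\sin(\lambda\rho_0)e^{i\lambda\rho_0}=\tfrac1{2i}(e^{2i\lambda\rho_0}-1)$ turns the integral into $\tfrac{a_0}{2i}\big(\int e^{2i\lambda\rho_0}\chi\tfrac{d\lambda}{\lambda}-\int\chi\tfrac{d\lambda}{\lambda}\big)$; since $\rho_0\ge R_1>0$, the first integral is $O(1)$ by non-stationary phase, so $|S_{t_0}f_0(\rho_0)|\approx a_0\log(\Lambda'/\Lambda)$ while $\int(\lambda^2+1)^{1/2}|G_0|^2\approx a_0^2\log(\Lambda'/\Lambda)$: the block has ``efficiency'' $\sqrt{\log(\Lambda'/\Lambda)}$. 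Two companion estimates express that well-separated blocks barely interact: for $t\ne t_0$ the phase derivative on the band exceeds $\tfrac12|t-t_0|\Lambda$, so $|S_tf_0(\rho)|\lesssim a_0\log(\Lambda'/\Lambda)\,|t-t_0|^{-1}\Lambda^{-2}$ for all $\rho\in[R_1,R_2]$; and at time $t_0$ but $\rho\ne\rho_0$ one uses $|\rho\pm\rho_0|\ge\min(2R_1,|\rho-\rho_0|)$ to get $|S_{t_0}f_0(\rho)|\lesssim a_0(\min(2R_1,|\rho-\rho_0|)\Lambda)^{-1}$.

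To force blow-up near \emph{every} $x\in\Sa$ along $\gamma$, I would group the blocks into families $J_m$, $m\in\N$ (writing $m(j)$ for the family of block $j$). Using that $\gamma$ is a strictly increasing continuous bijection near $0$ with $\gamma(0)=0$, pick $\eta_m\downarrow0$ so fast that $\tau_m:=\gamma^{-1}(2\eta_m)$ is defined and $\tau_{m+1}\le\tfrac12\tau_m$; then $\tau_m\downarrow0$, $\gamma(\tau_m)=2\eta_m>\eta_m$, and $|\tau_m-\tau_{m(j)}|\ge\tfrac12\tau_{m(j)}$ for $m\ne m(j)$. Let $P_m\subset[R_1,R_2]$ be an $\eta_m$-net with $N_m\asymp\eta_m^{-1}$ points, pairwise $\ge\eta_m$ apart. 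The family $J_m$ consists of $N_m$ blocks, all with time $t_j=\tau_m$, distances $\rho_j$ running over $P_m$, common amplitude $a_j=\alpha_m$, common logarithmic length $\log(\Lambda_j'/\Lambda_j)=L_m$, with pairwise disjoint bands. Choosing $L_m:=N_m m^2 2^m$ and $\alpha_m:=m/L_m$ gives $|S_{\tau_m}f_j(\rho_j)|\approx m$ at every $j\in J_m$, while $\sum_m N_m\alpha_m^2 L_m=\sum_m 2^{-m}<\infty$, so $f:=\sum_j f_j$ is radial and in $H^{1/2}(\mathbb{H}^3)$. Now place the bands: choose each $\Lambda_j$ in turn, large enough in terms of $(\tau_i)_{i\le m(j)}$, $\eta_{m(j)}$, $a_j$ and $j$, so that — by the two companion estimates together with $|\tau_m-\tau_{m(j)}|\gtrsim\tau_{m(j)}$ — the cross contribution $\sum_{j'\ne j}|S_{\tau_m}f_{j'}(\rho)|$ is $\le 2$ for every $m$ and every $\rho\in P_m$; the same choices make $u:=\sum_j S_tf_j$ locally uniformly convergent, hence continuous, on $\Sa\times(0,\infty)$ (on $\Sa\times[T_1,T_2]$ all but finitely many $j$ have $|t-t_j|\ge T_1/2$; and $u$ is the Schr\"odinger propagation of $f$, the partial sums converging in $H^{1/2}$ and $S_t$ being $L^2$-continuous). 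Then for $x\in\Sa$, picking $j_m\in J_m$ with $|\rho_{j_m}-d(o,x)|\le\eta_m$ and $y_m$ the point of the geodesic ray from $o$ through $x$ at distance $\rho_{j_m}$ from $o$, one has $d(x,y_m)\le\eta_m<\gamma(\tau_m)$, $\tau_m\to0$, and by radiality $|u(y_m,\tau_m)|=|S_{\tau_m}f(\rho_{j_m})|\ge|S_{\tau_m}f_{j_m}(\rho_{j_m})|-2\to\infty$ (the leading value being $\approx m$), establishing the desired $\limsup=+\infty$ at every $x\in\Sa$.

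The phase estimates and the parameter bookkeeping are routine. The main obstacle — the genuinely new point beyond the classical single-point, single-time counter-examples — is the simultaneity: one $H^{1/2}$ function whose propagation blows up near \emph{all} points of $\Sa$ along an \emph{arbitrary} admissible $\gamma$. That is what forces the doubly-indexed scheme: the family times $\tau_m=\gamma^{-1}(2\eta_m)$ are dictated by $\gamma$, the nets $P_m$ must be refined accordingly, and the frequency bands must then be pushed out far enough to keep the $H^{1/2}$ norm summable and all cross terms negligible simultaneously.
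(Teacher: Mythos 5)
Your proposal is correct, and it implements the same underlying idea as the paper — frequency-disjoint ``chirp'' blocks tuned to specific $(\rho_j,t_j)$-pairs, exploiting that in $\mathbb{H}^3$ the spherical analysis is explicit and $H^{1/2}$ sits exactly at the logarithmic threshold $\int \lambda^{-1}\,d\lambda$ — but the construction and the bookkeeping are organized differently in a way worth comparing. The paper uses a \emph{single} countable sequence $\{(x_j,t_j)\}$ with $\{x_j\}$ dense in the fattened annulus $\Sa'$ and $t_j\downarrow 0$, and achieves simultaneous blow-up through a subsequence extraction: given $x\in\Sa$ it successively selects $(x_{j_l},t_{j_l})$ with $d(x,x_{j_l})<\gamma(t_{j_l})$ using only continuity and strict monotonicity of $\gamma$. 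You instead build a two-level scheme of families $J_m$, each with one time $\tau_m=\gamma^{-1}(2\eta_m)$ and an $\eta_m$-net $P_m$ of the annulus, so the matching of blocks to a given $x$ becomes automatic; this is arguably a cleaner way to encode the admissibility condition $d(x,y)<\gamma(t)$. The paper also chooses the amplitude directly as the single function $\lambda^{-1}(\log\lambda)^{-3/4}$ together with bands $R_j=r_j^M$, so that the $j$-th block gives $\gtrsim(\log R_j)^{1/4}$ while the norm integral $\int_3^\infty \lambda^{-1}(\log\lambda)^{-3/2}d\lambda$ converges — i.e.\ the growth and the $H^{1/2}$ summability are forced simultaneously by one formula. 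You instead put a constant amplitude $\alpha_m$ on each block of logarithmic width $L_m$ and tune the pair $(\alpha_m,L_m)$ discretely ($\alpha_m L_m=m$, $\sum N_m\alpha_m^2 L_m<\infty$); the net effect is the same. Finally the paper places the factor $\varphi_\lambda(s_j)$ in $\widehat f$, you place $e^{i\lambda\rho_j}$; both resonate with $\sin(\lambda\rho_j)$ after writing $\sin\lambda\rho_0\,e^{i\lambda\rho_0}=\tfrac{1}{2i}(e^{2i\lambda\rho_0}-1)$, respectively $\sin^2\lambda\rho_0=\tfrac12(1-\cos2\lambda\rho_0)$, and both make the remaining oscillatory piece a non-stationary-phase error. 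The one small imprecision in your write-up is that the first companion estimate $|S_tf_0(\rho)|\lesssim a_0\,|t-t_0|^{-1}\Lambda^{-2}$ is valid only under the implicit constraint $|t-t_0|\Lambda\gg R_2$ (so that the chirp dominates the linear phase on the band), which your ``choose $\Lambda_j$ large enough'' would need to state explicitly, exactly as the paper's condition (3.5); once that is added, your scheme — including the continuity of $u$ via local uniform convergence of the partial sums — goes through in the same way as the paper's.
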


Theorem \ref{counter-example} shows that the solutions of the Schr\"odinger equation, unlike Harmonic functions or solutions of the Heat equation, do not admit a natural wide approach region. Thus it becomes interesting to study the pointwise convergence of solutions along appropriate curves and its connection with the regularity of the initial data. Such problems have received considerable interest in recent years: in dimension one, by Cho-Lee-Vargas \cite{CLV}, Ding-Niu \cite{DN} and in higher dimensions, by Cao-Miao \cite{CM}, Minguill\'on \cite{Minguillon}.

In \cite{CM} and \cite{Minguillon}, the authors have shown that on $\R^n$, the condition $\beta> n/2(n+1)$ is sufficient to obtain almost everywhere pointwise convergence along certain families of curves. Firstly, their arguments crucially use dilation and the explicit knowledge of its interaction with the Fourier transform. In the case of a Riemannian manifold, where the volume measure neither satisfies any doubling property nor does it admit dilation (such as Damek-Ricci spaces), the above mentioned Euclidean machineries work no longer and thus it offers a fresh challenge. Secondly, it is interesting to ask when the initial condition is more symmetric, for instance, radial, whether one can lower the above mentioned regularity threshold and yet have pointwise convergence. 

In this light, we commence our exploration on Damek-Ricci spaces by identifying a natural family of curves to study. For $x \in S$, we consider curves $\gamma_x : \R \to S$ such that $\gamma_x(0)=x$. Now as we will be exclusively dealing with radial initial data, it is both natural and customary to identify all the points $x$ on a geodesic sphere centered at the identity $e$ of $S$, solely by their geodesic distance, say $s$ from $e$, that is, $s=d(e,x) \in [0,\infty)$. Here $d(\cdot,\cdot)$ is the inner metric on $S$ corresponding to the left-invariant Riemannian metric.  Then the solution of the Schr\"odinger equation with initial data $f \in \mathscr{S}^2(S)_o$, along a curve $\gamma_s$ is given by,
\begin{equation*} 
S_t f(\gamma_s(t))=S_t f(d(e,\gamma_s(t)))= \int_{0}^\infty \varphi_\lambda(d(e,\gamma_s(t)))\:e^{it\left(\lambda^2 + \frac{Q^2}{4}\right)}\:\widehat{f}(\lambda)\: {|{\bf c}(\lambda)|}^{-2}\: d\lambda\:.
\end{equation*}
Again keeping the notion of radiality in mind, we consider curves that satisfy the following natural analogues of the conditions considered in \cite{CLV}: there exist constants $C_j\: (j=1,2,3)$, independent of $s,s' \in [0,\infty)\:,\:t,t' \in \R$ and $\alpha \ge 0$, such that
\begin{eqnarray*}
&&(\mathscr{H}_1) \:\:\:\:\:\: \left|\:d(e,\gamma_s(t))-d(e,\gamma_s(t'))\:\right| \le C_1\: {|t-t'|}^\alpha\:, \\
&&(\mathscr{H}_2) \:\:\:\:\:\: C_2\:|s-s'| \le \left|\:d(e,\gamma_s(t))-d(e,\gamma_{s'}(t))\:\right| \le C_3\:|s-s'|\:.
\end{eqnarray*}
We note that by the triangle inequality, $(\mathscr{H}_1)$ is weaker than the condition:
\begin{equation*}
d\left(\gamma_s(t),\gamma_s(t')\right) \le C_1\: {|t-t'|}^\alpha\:,
\end{equation*}
and hence $\alpha$ is essentially the degree of tangential convergence. 

Clearly, the above curves generalize the classical case of vertical lines $\gamma_s(t)=s$, studied in \cite{Dewan} and thus one may expect to require more regularity on the initial data to guarantee pointwise convergence. Our result (Corollary \ref{cor1}) however, shows that the regularity threshold $\beta \ge 1/4$ (as in the case of vertical lines) is again sufficient to obtain almost everywhere pointwise convergence, even along such general curves. In fact, we prove more and this brings us to the definition of the homogeneous Sobolev spaces. For $\beta \ge 0$, the homogeneous Sobolev spaces for the special case of radial  functions (corresponding to the shifted Laplace-Beltrami operator $\tilde{\Delta}:=\Delta + \frac{Q^2}{4}$) are defined as,
\begin{equation*} 
\dot{H}^\beta(S):=\left\{f \in L^2(S): {\|f\|}_{\dot{H}^\beta(S)}:= {\left(\int_0^\infty \lambda^{2\beta}\: {|\widehat{f}(\lambda)|}^2 {|{\bf c}(\lambda)|}^{-2} d\lambda\right)}^{1/2}< \infty\right\}\:.
\end{equation*}
We now state our first result dealing with maximal estimates on annuli (in the following statement $A(\cdot)$ denotes the density function, see section $2$ for more details):
\begin{theorem} \label{thm1}
Let $0<r_1<r_2<\infty$. Assume that $\gamma$ satisfies $(\mathscr{H}_1)$ for $\alpha \in [\frac{1}{2}, 1]$ and $(\mathscr{H}_2)$ for $s,s' \in [0,r_2)$ and $t,t' \in [-T,T]$, for some fixed 
\begin{equation} \label{time_localization}
0<T < {\left(\frac{C_2r_1}{2C_1}\right)}^{1/\alpha}\:.
\end{equation}
Then we have for all $f \in \mathscr{S}^2(S)_o$\:,
\begin{equation} \label{thm1_estimate}
{\left(\bigintssss_{r_1}^{r_2} {\left(\displaystyle\sup_{t \in [-T,T]}\left|S_t f(d(e,\gamma_s(t)))\right|\right)}^2 A(s)\:ds\right)}^{1/2} \le C \: {\|f\|}_{\dot{H}^{1/4}(S)}\:,
\end{equation}
where the positive constant $C$ depends only on $r_1,r_2$ and the dimension of $S$.
\end{theorem}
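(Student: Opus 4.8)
The plan is to linearize the supremum, decompose dyadically in the spherical frequency $\lambda$, reduce the high-frequency part to Euclidean-type oscillatory integral operators by means of the asymptotics of the spherical functions, and then run a $TT^{*}$ argument.

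\textbf{Step 1: Linearization and dyadic decomposition.} First I would fix, by measurable selection, a function $[r_1,r_2]\ni s\mapsto t(s)\in[-T,T]$ realizing the supremum in \eqref{thm1_estimate} up to a factor $2$, so that it suffices to bound the linear operator $Tf(s):=S_{t(s)}f(d(e,\gamma_s(t(s))))$ on $L^2(A(s)\,ds;[r_1,r_2])$. Writing $\rho_s(t):=d(e,\gamma_s(t))$, the time localization \eqref{time_localization} together with $(\mathscr H_1)$ gives $C_1 T^{\alpha}<C_2 r_1/2\le r_1/2$, whence $\rho_s(t)\in[r_1/2,\,2r_2]=:I_0$ for all $s\in[r_1,r_2]$, $t\in[-T,T]$; thus the whole estimate takes place on the fixed compact annulus $I_0$, bounded away from the identity, which is what makes the spherical asymptotics and the density $A$ uniformly controlled. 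Next I would split $f=\sum_{j\ge 0}f_j$ with $\widehat{f_j}$ supported in $[2^{j-1},2^{j+1}]$ for $j\ge1$ and in $[0,2]$ for $j=0$. The piece $j=0$ is immediate: since $|\varphi_\lambda(r)|\le\varphi_0(r)$ is bounded on $I_0$, one gets $\sup_t|Tf_0(s)|\lesssim\int_0^2|\widehat{f_0}(\lambda)|\,|\mathbf c(\lambda)|^{-2}\,d\lambda\lesssim\|f_0\|_{\dot H^{1/4}(S)}$ by Cauchy--Schwarz, using that $|\mathbf c(\lambda)|^{-2}\lesssim\lambda^2\lesssim\lambda^{1/2}$ on $(0,2)$, so that $\int_0^2\lambda^{-1/2}|\mathbf c(\lambda)|^{-2}\,d\lambda<\infty$; this is precisely why the \emph{homogeneous} Sobolev norm, which degenerates at $\lambda=0$, already suffices.

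\textbf{Step 2: Reduction to Euclidean-type oscillatory integral operators.} For $j\ge1$ I would insert the large-$\lambda$ asymptotic expansion of $\varphi_\lambda$ on $I_0$ used in \cite{ADY, Dewan}: for $\lambda\ge1$ and $r\in I_0$ one has $\varphi_\lambda(r)=\sum_{\pm}\mathbf c(\pm\lambda)\,e^{\pm i\lambda r}\,\alpha_{\pm}(\lambda,r)+E(\lambda,r)$, where $\alpha_{\pm}(\lambda,\cdot)$ is comparable to $A(r)^{-1/2}\asymp 1$ with $|\partial_\lambda^N\alpha_{\pm}|\lesssim_N\lambda^{-N}$ and $E$ is a remainder gaining a power of $\lambda$. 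Since $\overline{\mathbf c(\lambda)}=\mathbf c(-\lambda)$ on the real axis, $\mathbf c(\pm\lambda)|\mathbf c(\lambda)|^{-2}=\mathbf c(\mp\lambda)^{-1}$, of modulus $\asymp|\mathbf c(\lambda)|^{-1}$; so, discarding the unimodular factor $e^{itQ^2/4}$ and absorbing the bounded factors, the contribution of $f_j$ to $Tf$ becomes a bounded combination of operators
\[
\mathcal S_{j,\pm}f(s)=\int_{2^{j-1}}^{2^{j+1}}e^{\,i\left(\pm\lambda\rho_s(t(s))+t(s)\lambda^2\right)}\,b_{j,\pm}\!\left(\lambda,\rho_s(t(s))\right)\widehat{f_j}(\lambda)\,d\lambda,
\]
with $b_{j,\pm}(\lambda,\cdot)$ of size $\asymp|\mathbf c(\lambda)|^{-1}$ and symbol-type in $\lambda$, plus a remainder treated identically with more decay to spare. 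The task is then to prove, with constants depending only on $r_1,r_2,\dim S$,
\[
\|\mathcal S_{j,\pm}f\|_{L^2(A\,ds;[r_1,r_2])}^2\ \lesssim\ 2^{j/2}\,\|f_j\|_{L^2(S)}^2\ \asymp\ \|f_j\|_{\dot H^{1/4}(S)}^2 ,
\]
together with the off-diagonal refinement of Step 4.

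\textbf{Step 3: The core maximal estimate (the main obstacle).} This I would attack by $TT^{*}$: for $\mathcal S:=\mathcal S_{j,\pm}$ the kernel of $\mathcal S^{*}\mathcal S$ is
\[
K(\lambda,\mu)=\int_{r_1}^{r_2}e^{\,i\left[t(s)(\lambda^2-\mu^2)+\big(\pm\lambda\mp\mu\big)\rho_s(t(s))\right]}\,b_{j,\pm}(\lambda,\cdot)\,\overline{b_{j,\pm}(\mu,\cdot)}\,A(s)\,ds,
\]
and one needs $\sup_\lambda\int|K(\lambda,\mu)|\,d\mu\lesssim 2^{j/2}$ in the appropriate normalization. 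Here the Hölder hypothesis $(\mathscr H_1)$ with $\alpha\in[\tfrac12,1]$ is exactly calibrated: at frequency $2^j$ the intrinsic time scale is $2^{-2j}$, over which $\rho_s(\cdot)$ moves by at most $C_1 2^{-2\alpha j}\le C_1 2^{-j}$, i.e.\ by at most one wavelength of $e^{\pm i\lambda\rho}$ — so the curve cannot spread the Schr\"odinger evolution beyond what already happens along a vertical line, and one may run the oscillatory-integral argument underlying the one-dimensional Schr\"odinger maximal estimate (Dahlberg--Kenig \cite{DK}; as in the vertical case \cite{Dewan}, cf.\ \cite{CLV}), now with $|\rho_s(t)-\rho_s(t')|\le C_1|t-t'|^{\alpha}$ feeding the kernel bounds and with the bilipschitz lower bound in $(\mathscr H_2)$ supplying the change of variable $s\mapsto\rho_s(t)$ on the fixed annulus $I_0$. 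I expect this to be the principal obstacle: the phase genuinely couples the spatial frequency $\lambda$ and the temporal frequency $\lambda^2$ (which forces precisely the $1/4$ loss), the linearizing function $t(s)$ is only measurable, and $\rho_s(\cdot)$ is only H\"older — so the kernel must be estimated without integrating by parts in $s$, extracting decay instead from the local $L^2$-orthogonality of the modulations $e^{it\lambda^2}$, $\lambda\sim2^j$; the time-localization \eqref{time_localization} is exactly what keeps $\rho_s(t)$ inside $I_0$ so that none of the symbol or density bounds degenerate.

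\textbf{Step 4: Summation at the endpoint.} To reach $\beta=1/4$ rather than $1/4+\varepsilon$, the dyadic pieces must not be summed by the triangle inequality. Instead I would upgrade Step 3 to the almost-orthogonal bound $|\langle\mathcal S_{j,\pm}f,\mathcal S_{k,\pm'}f\rangle|\lesssim 2^{-\varepsilon|j-k|}\big(2^{j/2}\|f_j\|_{L^2(S)}^2+2^{k/2}\|f_k\|_{L^2(S)}^2\big)$, the off-diagonal gain coming from the separation of the temporal frequencies $\lambda^2\sim2^{2j}$ and $2^{2k}$ when $j\ne k$. A Schur/Cauchy--Schwarz summation then gives
\[
\|Tf\|_{L^2(A\,ds;[r_1,r_2])}^2\ \lesssim\ \sum_{j\ge0}2^{j/2}\|f_j\|_{L^2(S)}^2\ \asymp\ \sum_{j}\|f_j\|_{\dot H^{1/4}(S)}^2\ =\ \|f\|_{\dot H^{1/4}(S)}^2 ,
\]
which is \eqref{thm1_estimate}.
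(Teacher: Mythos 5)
Your Step 1 and the initial reduction to the asymptotic expansion of $\varphi_\lambda$ on a fixed annulus are in line with the paper (which uses the expansion \eqref{anker_series_expansion}, the localization Lemma \ref{curve_growth_lemma}, and handles the error term $\mathscr{E}$ by the pointwise bound \eqref{coefficient_estimate} plus Cauchy--Schwarz). The low-frequency observation at the start is also sound. But the heart of your argument, Step 3, takes the $TT^*$ composition in the wrong direction, and I do not see how to repair it. You form $\mathcal{S}^*\mathcal{S}$, whose kernel $K(\lambda,\mu)$ is an integral over $s\in(r_1,r_2)$ of $e^{i[t(s)(\lambda^2-\mu^2)\pm(\lambda-\mu)\rho_s(t(s))]}$ against $A(s)\,ds$. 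To get the Schur bound $\sup_\lambda\int|K(\lambda,\mu)|\,d\mu\lesssim 2^{j/2}$ you need substantial decay in $|\lambda-\mu|$, which has to come from cancellation in the $s$-integral. But $t(\cdot)$ is merely measurable, and hence so is $s\mapsto\rho_s(t(s))$; indeed $(\mathscr{H}_1)$--$(\mathscr{H}_2)$ give $|\rho_s(t(s))-\rho_{s'}(t(s'))|\lesssim|s-s'|+|t(s)-t(s')|^\alpha$, and the second term need not be small for nearby $s,s'$. So there is no integration by parts, no van der Corput, no stationary phase available in $s$, and the pointwise bound $|K(\lambda,\mu)|\lesssim|{\bf c}(\lambda)|^{-1}|{\bf c}(\mu)|^{-1}$ has no decay at all. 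Your remark that one should ``extract decay from the local $L^2$-orthogonality of the modulations'' does not actually identify a mechanism; as written Step 3 fails, and with it the almost-orthogonality in Step 4 which is built on top of it.

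The paper sidesteps this precisely by taking the other composition. After rewriting the main term as an operator $P_\gamma$ from $L^2(d\lambda)$ to $L^2(ds)$ (with $g(\lambda)=\widehat f(\lambda)\lambda^{1/4}|{\bf c}(\lambda)|^{-1}$ absorbing the $\dot H^{1/4}$ weight), it bounds the adjoint $P^*_{\gamma}$, so the relevant kernel $I_N(s,s')$ is an integral over $\lambda$ with phase $\lambda(\rho_{s'}(t(s'))-\rho_s(t(s)))+(t(s')-t(s))(\lambda^2+Q^2/4)$. Here the phase \emph{is} smooth in the integration variable $\lambda$, and the quadratic term gives van der Corput decay; this is exactly Lemma \ref{oscillatory_integral_lemma} (imported from Ding--Niu), which yields $|I_N(s,s')|\lesssim|s-s'|^{-1/2}$ uniformly in the frequency cutoff. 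The resulting bilinear form is then recognized as a one-dimensional Riesz potential $I_{1/2}$, converted to $\int_\R|\xi|^{-1/2}|\tilde h(\xi)|^2\,d\xi$ via \eqref{riesz_identity}, and closed by Pitt's inequality (Lemma \ref{Pitt's_ineq}). No dyadic decomposition and no almost-orthogonal summation are needed, so the endpoint $\beta=1/4$ comes out directly. If you wish to keep a dyadic structure, you would still have to run the $TT^*$ in the spatial (range) variable, not the frequency one, to have any hope of kernel decay; and at that point you would essentially be rederiving the paper's argument piece by piece, with the Riesz--Pitt step replaced by a Schur test on $|s-s'|^{-1/2}$, which Schur alone does not close at the endpoint either. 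One small additional slip: you assert $C_2 r_1/2\le r_1/2$, i.e.\ $C_2\le1$, which is not part of $(\mathscr{H}_2)$; the correct containment is $\rho_s(t)\in[C_2 r_1/2,\,3C_3 r_2/2]$ as in Lemma \ref{curve_growth_lemma}, though this does not affect the substance.
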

The key idea in the proof of Theorem \ref{thm1} is to keep track of the oscillation afforded by both the spherical functions as well as the multiplier corresponding to the Schr\"odinger operator. But this becomes a challenging task as in the generality of Damek-Ricci spaces, no explicit expression of $\varphi_\lambda$ is known. In this regard, we crucially use a variation of the classical Harish-Chandra series expansion of $\varphi_\lambda$ which is only valid in compacts not containing the group identity. This forces us to consider specific time localizations (\ref{time_localization}) and moreover, to take the rather unorthodox approach of obtaining maximal estimates on annuli, in stead of balls. The oscillation of $\varphi_\lambda$ is realized by looking at the leading terms in the series expansion. The corresponding oscillatory parts of the linearized maximal function are taken care of by proceeding with an abstract $TT^*$ argument, followed by an oscillatory integral estimate and then finally applying the Pitt's inequality. The remaining part of the linearized maximal function is then dealt with by the pointwise decay of certain error terms appearing from the series expansion due to Anker et. al., which turns out to be of critical importance.

By standard arguments in the literature (for instance, see the proof of Theorem 5 of \cite{Sjolin}), Theorem \ref{thm1} yields pointwise convergence for solutions of the Schr\"odinger operator on annuli,
\begin{equation*}
A(r_1,r_2):=\left\{x \in S \mid r_1 < d(e,x) <r_2 \right\}\:,
\end{equation*} 
for all $0<r_1<r_2<\infty\:.$ Then as
$$A\left(\frac{1}{N},N\right) \:\big\uparrow \: S \setminus \{e\}\:,\:\text{ as } N \to \infty\:,$$
by a simple application of the Monotone Convergence Theorem, we get the desired result on pointwise convergence:
\begin{corollary}\label{cor1}
Let $\alpha \in [\frac{1}{2},1]$. Suppose that for every $s_0 \in [0,\infty)$, there exists a neighborhood $V$ $\subset [0,\infty) \times \R$ of $(s_0,0)$ such that $(\mathscr{H}_1)$ holds for $(s,t),(s,t') \in V$ and $(\mathscr{H}_2)$ holds for all $(s,t),(s',t) \in V$. Then for all $f \in \dot{H}^{1/4}(S)_o$ and hence for all $f \in H^{\beta}(S)_o$, with $\beta \ge 1/4$, we have
\begin{equation*}
\displaystyle\lim_{t \to 0} S_tf(\gamma_s(t))=f(s)\:,
\end{equation*}
for almost every $s \in [0,\infty)$, which also means
\begin{equation*}
\displaystyle\lim_{t \to 0} S_tf(\gamma_x(t))=f(x)\:,
\end{equation*}
for almost every $x \in S$, with respect to the left Haar measure.
\end{corollary}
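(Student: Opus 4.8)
The plan is to derive Corollary \ref{cor1} from Theorem \ref{thm1} by a localization-and-exhaustion argument, exactly as indicated in the discussion preceding the statement. First I would fix $s_0 \in [0,\infty)$ and use the hypothesis to obtain a neighborhood $V \subset [0,\infty) \times \R$ of $(s_0,0)$ on which $(\mathscr{H}_1)$ and $(\mathscr{H}_2)$ hold. Shrinking $V$, I would find $r_1 < r_2$ with $0 \le r_1 < s_0 < r_2$ (taking $r_1 = 0$ if $s_0$ is near the origin, but since the endpoint $s=0$ corresponds to the single point $e$ which has measure zero, it is harmless to stay away from it), and $T>0$ small enough that $[r_1,r_2] \times [-T,T] \subset V$ and the time-localization constraint \eqref{time_localization} is met; here one uses that the constants $C_1, C_2$ in $(\mathscr{H}_1), (\mathscr{H}_2)$ are the local ones provided on $V$. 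On this configuration Theorem \ref{thm1} applies and gives the maximal estimate \eqref{thm1_estimate}.

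Next I would run the standard reduction from a maximal inequality to almost-everywhere convergence. Since radial functions in $\dot H^{1/4}(S)_o$ (respectively, $H^{\beta}(S)_o$ with $\beta \ge 1/4$, which embeds into $\dot H^{1/4}$ up to the lower-order part handled by the $L^2$-theory of \eqref{vertical_convergence}) can be approximated in the relevant norm by functions in $\mathscr{S}^2(S)_o$, for which \eqref{schrodinger_soln} converges to $f$ as $t \to 0$ by \eqref{vertical_convergence}, the sublevel-set argument in the proof of Theorem 5 of \cite{Sjolin} shows that the set of $s \in (r_1,r_2)$ where $\limsup_{t \to 0}|S_t f(\gamma_s(t)) - f(s)| > 0$ has $A(s)\,ds$-measure zero. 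Concretely, for $\varepsilon > 0$ and $g \in \mathscr{S}^2(S)_o$ with $\|f-g\|_{\dot H^{1/4}(S)}$ small, one bounds the measure of $\{s : \limsup_t |S_t f(\gamma_s(t)) - f(s)| > \varepsilon\}$ by $\varepsilon^{-2}$ times $\big(\sup_t |S_t(f-g)(\gamma_s(t))|\big)^2$ integrated against $A(s)\,ds$ plus the $L^2$-contribution of $\|f-g\|_2$, both controlled by \eqref{thm1_estimate} and the approximation; letting the approximation improve forces this measure to $0$.

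Finally I would globalize. Covering $[0,\infty)$ by countably many such intervals $(r_1,r_2)$ (e.g. running over the exhaustion $A(1/N, N)$ together with the observation that the exceptional set is a countable union of null sets), I conclude that $\lim_{t \to 0} S_t f(\gamma_s(t)) = f(s)$ for a.e. $s \in [0,\infty)$. Translating back to the group via the identification of radial functions with functions of $s = d(e,x)$, and recalling that the left Haar measure of the sphere $\{d(e,x)=s\}$ disintegrates as $A(s)\,ds$ in the radial variable (so a null set in $s$ pulls back to a null set in $S$), yields $\lim_{t \to 0} S_t f(\gamma_x(t)) = f(x)$ for a.e. $x \in S$, and the geodesic-sphere at $s=0$, being a single point, contributes nothing. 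The main obstacle — really the only nontrivial point — is verifying that the local constants furnished by the hypothesis permit a choice of $T$ satisfying \eqref{time_localization} simultaneously with $[r_1,r_2] \times [-T,T] \subset V$; once $r_1$ is bounded away from $0$ this is immediate by taking $T$ small, and all the rest is the routine maximal-function-to-convergence machinery together with Monotone Convergence.
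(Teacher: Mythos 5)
Your proposal is correct and follows exactly the route the paper indicates: apply Theorem~\ref{thm1} to get the maximal estimate on an annulus, run the standard Sj\"olin-style reduction from a maximal inequality to almost-everywhere convergence (using that $S_t g(\gamma_s(t))\to g(s)$ for Schwartz $g$ by joint continuity), and then exhaust $S\setminus\{e\}$ by $A(1/N,N)$ via Monotone Convergence. One tiny simplification: owing to the spectral gap $\lambda^2+Q^2/4\ge Q^2/4$, the inclusion $H^\beta(S)_o\subset\dot H^{1/4}(S)_o$ for $\beta\ge 1/4$ is a direct continuous embedding, so the hedge about handling a ``lower-order part'' separately is unnecessary.
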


\begin{remark} \label{remark1}
The sharpness of the regularity threshold $\beta=1/4$ in (\ref{thm1_estimate}), follows from the sharpness in the special case of vertical lines \cite[proof of Theorem 1.1, statement (i), pp. 21-23]{DR}.
\end{remark}

In this article, we include the case when $N$ is abelian and $\mathfrak{n}$ coincides with its center in the definition of $H$-type groups, as a degenerate case, so that the Real hyperbolic spaces are also included in our discussion (see \cite[pp. 209-210]{CDKR}).

We now return to $\R^n$, to address the issue raised earlier about lowering the required regularity threshold while specializing to radial initial data, for $n \ge 2$. For a radial function $f$ belonging to the Schwartz class $\mathscr{S}(\R^n)_o$, the solution of the Schr\"odinger equation on $\R^n$, 
\begin{equation} \label{schrodinger_euclidean}
\begin{cases}
	 i\frac{\partial u}{\partial t} =\Delta_{\R^n} u\:,\:  (x,t) \in \R^n \times \R \\
	u(0,\cdot)=f\:,\: \text{ on } \R^n \:,
	\end{cases}
\end{equation}
is given by
\begin{equation} \label{schrodinger_soln_euclidean}
\tilde{S}_t f(s):= \int_{0}^\infty \J_{\frac{n-2}{2}}(\lambda s)\:e^{it\lambda^2}\:\F f(\lambda)\: \lambda^{n-1}\: d\lambda\:,
\end{equation}
where $\J_{\frac{n-2}{2}}$ is the modified Bessel function of order $\frac{n-2}{2}$ (see section $2$ for more details), $\F f$ is the Euclidean Spherical Fourier transform of $f$ and $s=\|x\|$, is the distance of $x$ from the origin $o$. In this regard, we also formulate the conditions $(\mathscr{H}_1)$ and $(\mathscr{H}_2)$, in terms of the Euclidean norm $\|\cdot\|$ : there exist constants $C_j\: (j=4,5,6)$, independent of $s,s' \in [0,\infty)\:,\:t,t' \in \R$ and $\alpha \ge 0$, such that
\begin{eqnarray*}
&&(\mathscr{H}_3) \:\:\:\:\:\: \left|\:\|\gamma_s(t)\|-\|\gamma_s(t')\|\:\right| \le C_4\: {|t-t'|}^\alpha\:, \\
&&(\mathscr{H}_4) \:\:\:\:\:\: C_5\:|s-s'| \le \left|\:\|\gamma_s(t)\|-\|\gamma_{s'}(t)\|\:\right| \le C_6\:|s-s'|\:.
\end{eqnarray*}

It is interesting to observe that the arguments in the proof of Theorem \ref{thm1} can be carried out analogously for the special case of $\R^3$, but doing the same in the full generality of $\R^n$ seems to be a rather difficult task (see Remark \ref{special_remark} for more details). Nevertheless, taking a detour using local geometry of Riemannian manifolds and Fourier Analytic tools on Homogeneous spaces, we obtain the following analogue of Theorem \ref{thm1} for small annuli:
\begin{theorem} \label{thm2}
Let $0<\delta_1<\delta_2$ be sufficiently small. Assume that $\gamma$ satisfies $(\mathscr{H}_3)$ for $\alpha \in [\frac{1}{2},1]$ and $(\mathscr{H}_4)$ for $s,s' \in [0,\delta_2)$ and $t,t' \in [-T,T]$ for some fixed $$0<T <{\left(\frac{C_5\delta_1}{2C_4}\right)}^{1/\alpha}\:.$$ Then we have for all $f \in \mathscr{S}(\R^n)_o$,
\begin{equation} \label{thm2_estimate}
{\left(\bigintssss_{\delta_1}^{\delta_2} {\left(\displaystyle\sup_{t \in [-T,T]}\left|\tilde{S}_t f(\|\gamma_s(t)\|)\right|\right)}^2 \:s^{n-1}\:ds\right)}^{1/2} \le C \: {\|f\|}_{\dot{H}^{1/4}(\R^n)}\:,
\end{equation}
where the positive constant $C$ depends only on $\delta_1,\delta_2$ and $n$.
\end{theorem}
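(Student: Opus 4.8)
To prove Theorem~\ref{thm2} the plan is to deduce it from Theorem~\ref{thm1}, applied on the $n$-dimensional real hyperbolic space $\mathbb{H}^n\cong\mathbb{H}^n(\R)$. By the convention adopted above --- whereby an abelian $N$ with $\mathfrak{n}$ equal to its centre is admitted as a degenerate $H$-type group --- the space $\mathbb{H}^n$ is itself a Damek--Ricci space of dimension $n$, with $Q=n-1$, $\rho:=Q/2=(n-1)/2$, and density function $A(\sigma)\asymp(\sinh\sigma)^{n-1}\asymp\sigma^{n-1}$ for $\sigma$ in any fixed compact subset of $(0,\infty)$. Exactly as in the reasoning behind (\ref{time_localization}), the constraint $T<(C_5\delta_1/2C_4)^{1/\alpha}$ together with $(\mathscr{H}_3)$ and $(\mathscr{H}_4)$ forces $\|\gamma_s(t)\|$ to stay in a fixed compact interval $[\rho_1,\rho_2]\subset(0,\infty)$ for all $s\in[\delta_1,\delta_2]$ and $t\in[-T,T]$, and choosing $\delta_1<\delta_2$ small makes $[\rho_1,\rho_2]$ small; all the estimates below are uniform on this interval. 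First I would split $f=f_L+f_H$ by a smooth cutoff of $\F f$ at a frequency $\Lambda_0=\Lambda_0(\delta_1,\delta_2,n)$ so large that $\lambda\sigma\ge1$ on $[\Lambda_0,\infty)\times[\rho_1,\rho_2]$. Since $|\J_{\frac{n-2}{2}}(\lambda\sigma)|\le1$, Cauchy--Schwarz in $\lambda$ (the weight $\lambda^{-1/2}\lambda^{n-1}$ being integrable near $0$) gives $\big\|\sup_{t\in[-T,T]}|\tilde{S}_tf_L(\|\gamma_s(t)\|)|\big\|_{L^2(s^{n-1}\,ds,\,[\delta_1,\delta_2])}\lesssim\|f\|_{\dot{H}^{1/4}(\R^n)}$, so it remains to handle $f=f_H$.

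The heart of the argument is a pointwise comparison, for $\lambda\ge\Lambda_0$ and $\sigma\in[\rho_1,\rho_2]$, between the Euclidean spherical function $\J_{\frac{n-2}{2}}(\lambda\sigma)$ appearing in (\ref{schrodinger_soln_euclidean}) and the hyperbolic spherical function $\varphi_\lambda$ of $\mathbb{H}^n$ at the \emph{same} radius $\sigma$. On one side, $\J_{\frac{n-2}{2}}(\lambda\sigma)$ obeys the classical Bessel asymptotic expansion, with leading term a constant multiple of $(\lambda\sigma)^{-(n-1)/2}\cos\!\big(\lambda\sigma-\tfrac{(n-1)\pi}{4}\big)$ and remainder gaining a factor $(\lambda\sigma)^{-1}$. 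On the other side, on the compact set $[\rho_1,\rho_2]$ --- here $\delta_1>0$ is essential --- one has the Harish--Chandra expansion $\varphi_\lambda(\sigma)={\bf c}(\lambda)\Phi_\lambda(\sigma)+{\bf c}(-\lambda)\Phi_{-\lambda}(\sigma)$ with $\Phi_\lambda(\sigma)=e^{(i\lambda-\rho)\sigma}\sum_{k\ge0}\Gamma_k(\lambda)e^{-k\sigma}$ and $\Gamma_0\equiv1$. Isolating the $k=0$ terms, and using the known asymptotics ${\bf c}(\lambda)\,\lambda^{(n-1)/2}=b_0+O(\lambda^{-1})$ with $b_0\neq0$ and $\arg b_0=-\tfrac{(n-1)\pi}{4}$, together with the same pointwise error estimates for the Harish--Chandra series that enter the proof of Theorem~\ref{thm1} (which give $\sum_{k\ge1}|\Gamma_k(\lambda)|e^{-k\sigma}=O(\lambda^{-1})$ uniformly on $[\rho_1,\rho_2]$), one arrives at
\begin{equation*}
\varphi_\lambda(\sigma)=m(\sigma)\,\J_{\frac{n-2}{2}}(\lambda\sigma)+R_\lambda(\sigma),\qquad m(\sigma):=\frac{2|b_0|}{c_n}\,e^{-\rho\sigma}\sigma^{(n-1)/2},
\end{equation*}
where $c_n>0$ is the normalizing constant in the Bessel asymptotics and the Bessel phase factor $e^{-i(n-1)\pi/4}$ is cancelled precisely against $\arg b_0$; thus $m$ is smooth, real and strictly positive on $[\rho_1,\rho_2]$ (indeed $m(\sigma)\asymp\sigma^{(n-1)/2}$, using $\sinh\sigma=\sigma(1+O(\sigma^2))$), while $R_\lambda(\sigma)=O(\lambda^{-(n+1)/2})$ uniformly in $\sigma\in[\rho_1,\rho_2]$.

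It then remains to assemble the pieces. Dividing the displayed identity by $m$ (legitimate, $m$ being bounded below on $[\rho_1,\rho_2]$) and inserting it into (\ref{schrodinger_soln_euclidean}), one gets, writing $\sigma=\sigma(s,t):=\|\gamma_s(t)\|$,
\begin{equation*}
\tilde{S}_tf(\|\gamma_s(t)\|)=\frac{e^{-itQ^2/4}}{m(\sigma)}\,S_t^{\mathbb{H}^n}g(\sigma)\;-\;\frac{1}{m(\sigma)}\int_0^\infty R_\lambda(\sigma)\,e^{it\lambda^2}\,\F f(\lambda)\,\lambda^{n-1}\,d\lambda,
\end{equation*}
where $S_t^{\mathbb{H}^n}$ is the Schr\"odinger solution operator (\ref{schrodinger_soln}) on $\mathbb{H}^n$ and $g$ is the radial function on $\mathbb{H}^n$ with $\widehat{g}(\lambda)=\F f(\lambda)\,\lambda^{n-1}\,|{\bf c}(\lambda)|^{2}$; since $\F f=\F f_H$ is supported in $[\Lambda_0,\infty)$, where $|{\bf c}(\lambda)|^{-2}\asymp\lambda^{n-1}$ with symbol-type derivative bounds, $g\in\mathscr{S}^2(\mathbb{H}^n)_o$ and $\|g\|_{\dot{H}^{1/4}(\mathbb{H}^n)}\asymp\|f\|_{\dot{H}^{1/4}(\R^n)}$. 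Let $\eta$ be the curve on $\mathbb{H}^n$ determined (in the radial sense) by $d(e,\eta_s(t)):=\|\gamma_s(t)\|$; then $(\mathscr{H}_3)$ and $(\mathscr{H}_4)$ say exactly that $\eta$ satisfies $(\mathscr{H}_1)$ and $(\mathscr{H}_2)$ with the same $\alpha$ and constants $C_1=C_4$, $C_2=C_5$, $C_3=C_6$, and the hypothesis on $T$ is precisely (\ref{time_localization}) for $\eta$ with $r_1=\delta_1$, $r_2=\delta_2$. Hence Theorem~\ref{thm1} on $\mathbb{H}^n$, together with $1/|m|\asymp1$ and $s^{n-1}\lesssim A(s)$ on $[\delta_1,\delta_2]$, bounds the contribution of the first term by a constant times $\|g\|_{\dot{H}^{1/4}(\mathbb{H}^n)}\asymp\|f\|_{\dot{H}^{1/4}(\R^n)}$. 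For the second term, $|R_\lambda(\sigma)|\lesssim\lambda^{-(n+1)/2}$ renders the $\lambda$-integral absolutely convergent, and one further Cauchy--Schwarz (the weight $\lambda^{-5/2}$ being integrable at infinity) bounds it by $\lesssim\|f\|_{\dot{H}^{1/4}(\R^n)}$; should a sharper estimate ever be needed here, one would reuse the $TT^*$/oscillatory-integral/Pitt's-inequality apparatus from the proof of Theorem~\ref{thm1}. Adding the three contributions yields (\ref{thm2_estimate}).

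The step I expect to be the main obstacle is the spherical-function comparison of the second paragraph: one must keep enough terms of the Harish--Chandra/Anker expansion of $\varphi_\lambda$ and of the Bessel asymptotics of $\J_{\frac{n-2}{2}}(\lambda\sigma)$, and exploit the precise leading behaviour of ${\bf c}(\lambda)$ --- in particular that $\arg{\bf c}(\lambda)\to-\tfrac{(n-1)\pi}{4}$, which matches the Bessel phase shift --- to see that the two leading terms agree up to the smooth, strictly positive factor $m$, so that the discrepancy $R_\lambda$ is a full power of $\lambda$ smoother than the main term. Everything else is either the trivial low-frequency estimate or a direct appeal to Theorem~\ref{thm1} on $\mathbb{H}^n(\R)$.
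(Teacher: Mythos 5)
Your overall strategy mirrors the paper's: split $\F f$ at a frequency threshold, dispose of the low-frequency piece by Cauchy--Schwarz, and, for the high-frequency piece, convert the Euclidean problem to a problem on an $n$-dimensional Damek--Ricci space (your choice is the degenerate one, $\mathbb{H}^n$), define $g$ by $\widehat{g}(\lambda)=\lambda^{n-1}\,|{\bf c}(\lambda)|^{2}\,\F f(\lambda)$ exactly as the paper does, transport the curve while preserving radial distances, verify $\|g\|_{\dot{H}^{1/4}}\asymp\|f\|_{\dot{H}^{1/4}}$, and appeal to Theorem~\ref{thm1}. All of that is sound and coincides with the paper's proof of Lemma~\ref{euclidean_large_freq_lemma}.

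The genuine problem is in the ``heart of the argument'', the spherical-function comparison. You try to re-derive the relation between $\varphi_\lambda(\sigma)$ and $\J_{\frac{n-2}{2}}(\lambda\sigma)$ by matching the leading term of the Harish--Chandra series with the leading term of the Bessel large-argument asymptotics, and you arrive at $m(\sigma)=\frac{2|b_0|}{c_n}e^{-\rho\sigma}\sigma^{(n-1)/2}$. This formula is incorrect: the exact multiplicative factor is $m(\sigma)=c_0\bigl(\sigma^{n-1}/A(\sigma)\bigr)^{1/2}$, which on $\mathbb{H}^n$ equals a constant times $(\sigma/\sinh\sigma)^{(n-1)/2}$ and is therefore $\asymp 1$ on $[\rho_1,\rho_2]$ (it does \emph{not} vanish like $\sigma^{(n-1)/2}$ as you assert). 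The discrepancy comes from mixing the classical Harish--Chandra normalization $\Phi_\lambda(\sigma)=e^{(i\lambda-\rho)\sigma}\sum\Gamma_k e^{-k\sigma}$, which carries the prefactor $e^{-\rho\sigma}$, with the variant used in the paper's expansion~(\ref{anker_series_expansion}), which carries the prefactor $A(\sigma)^{-1/2}=(\sinh\sigma)^{-(n-1)/2}$; the associated ${\bf c}$-normalizations and $\Gamma_k$-coefficients differ accordingly. This is not a cosmetic slip. If one inserts your $m$, the quantity $R_\lambda(\sigma)=\varphi_\lambda(\sigma)-m(\sigma)\J_{\frac{n-2}{2}}(\lambda\sigma)$ contains the term $(m_{\text{correct}}-m)(\sigma)\J_{\frac{n-2}{2}}(\lambda\sigma)$, which is of size $\lambda^{-(n-1)/2}$, not $\lambda^{-(n+1)/2}$; then the weight in your final Cauchy--Schwarz becomes $\lambda^{-1/2}$ at infinity, and the integral diverges, so the error term is \emph{not} controlled by $\|f\|_{\dot{H}^{1/4}(\R^n)}$.

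The repair --- and what the paper actually does --- is to dispense with the asymptotic matching altogether and invoke the Bessel-series expansion of $\varphi_\lambda$ near the identity, Lemma~\ref{bessel_series_expansion} (Astengo), with $M=0$: this gives at once $\varphi_\lambda(\sigma)=c_0\bigl(\sigma^{n-1}/A(\sigma)\bigr)^{1/2}\J_{\frac{n-2}{2}}(\lambda\sigma)+E_1(\lambda,\sigma)$ with $|E_1(\lambda,\sigma)|\lesssim \sigma^2(\lambda\sigma)^{-(n+1)/2}$ on $\lambda\sigma>1$, which is precisely the identity and the remainder size that your argument needs. The paper packages exactly this in Corollary~\ref{small_annuli_cor}: Theorem~\ref{thm1} controls $T_\gamma g$, the pointwise error estimate controls the Bessel-series remainder $T_{\gamma,5}g$, and hence the main Bessel piece $T_{\gamma,4}g$ (which, after feeding in $\widehat{g}|{\bf c}|^{-2}=\lambda^{n-1}\F f$, is a bounded multiple of $\tilde T_\gamma f$) is controlled --- without any need to work out the phase of ${\bf c}(\lambda)$ or the constant $b_0$. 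If you replace your second paragraph with a direct citation of Lemma~\ref{bessel_series_expansion} (keeping your $m(\sigma)=c_0(\sigma^{n-1}/A(\sigma))^{1/2}$), the rest of your argument goes through and becomes essentially the paper's.
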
 
For the proof of Theorem \ref{thm2}, we first consider the cases when the Fourier transform of the initial condition is supported in a neighborhood of the origin and when it is supported away from the origin. In the first case, the linearized maximal function is quite straightforward to control. The latter case however, is quite involved. In the large frequency regime, viewing $\R^n$ as the tangent space at the identity of a suitable Damek-Ricci space, we form a connection with Theorem \ref{thm1}:
\begin{itemize}
\item Using the fact that the Riemannian exponential map on a connected, simply-connected, complete, non-positively curved Riemannian manifold is a bijective local radial isometry, we can pushforward the curves in the small annuli (in $\R^n$) to the non-flat manifold, preserving the local geometric conditions.
\item The connection between the initial data and their Schr\"odinger propagations for $\R^n$ and $S$, is then obtained by means of the Bessel series expansion of $\varphi_\lambda$, repeated applications of the Abel transform, followed by utilizing a Schwartz multiplier corresponding to the ratio of the weights of the respective Plancherel measures.
\end{itemize}
Then Theorem \ref{thm2} follows by taking a resolution of identity. 

\begin{remark} \label{remark2}
The sharpness of the regularity threshold $\beta=1/4$ in (\ref{thm2_estimate}), follows from the sharpness in the special case of vertical lines \cite[pp. 55-58]{Sjolin2}.
\end{remark}
As an application of Theorem \ref{thm2}, we look at the family of curves whose norms satisfy,
\begin{equation} \label{curve_norm_condition}
\|\gamma_s(t)\|=s+C_7t^{1/2}\:,
\end{equation}
for some $C_7 \ge 0$, in some neighborhood of every $(s_0,0)$. Suitable dilations of annuli and the invariance of the norms of the curves (\ref{curve_norm_condition}) under a certain parabolic dilation in space and time, make it possible to apply Theorem \ref{thm2} to obtain the following result on pointwise convergence, almost everywhere on $\R^n$: 
\begin{theorem} \label{thm3}
Suppose that for every $s_0 \in [0,\infty)$, there exists a neighborhood $V \subset [0,\infty) \times \R$ of $(s_0,0)$ such that (\ref{curve_norm_condition}) holds for all $(s,t) \in V$. Then for all $f \in \dot{H}^{1/4}(\R^n)_o$ and hence for all $f \in H^{\beta}(\R^n)_o$, with $\beta \ge 1/4$, we have
\begin{equation*}
\displaystyle\lim_{t \to 0} S_tf(\gamma_s(t))=f(s)\:,
\end{equation*}
for almost every $s \in [0,\infty)$, which also means
\begin{equation*}
\displaystyle\lim_{t \to 0} S_tf(\gamma_x(t))=f(x)\:,
\end{equation*}
for almost every $x \in \R^n$.
\end{theorem}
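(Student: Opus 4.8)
The plan is to derive Theorem~\ref{thm3} from Theorem~\ref{thm2} by a parabolic rescaling argument together with the standard reduction of almost everywhere convergence to a local maximal estimate; all of the genuine analysis is already contained in Theorem~\ref{thm2}, so what remains is essentially a bookkeeping of scalings.

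First I would record that the normalization~(\ref{curve_norm_condition}) is designed precisely so that, on the neighbourhood $V$ supplied by the hypothesis, the family $\gamma_s$ satisfies $(\mathscr{H}_3)$ and $(\mathscr{H}_4)$ with $\alpha=1/2$: indeed $\big|\,\|\gamma_s(t)\|-\|\gamma_s(t')\|\,\big|=C_7\,|t^{1/2}-t'^{1/2}|\le C_7\,|t-t'|^{1/2}$, so $(\mathscr{H}_3)$ holds with $C_4=C_7$, while $\big|\,\|\gamma_s(t)\|-\|\gamma_{s'}(t)\|\,\big|=|s-s'|$, so $(\mathscr{H}_4)$ holds with $C_5=C_6=1$. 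Next, since radial Schwartz functions are dense in $\dot{H}^{1/4}(\R^n)_o$ and, for $g\in\mathscr{S}(\R^n)_o$, $\tilde{S}_tg(\|\gamma_s(t)\|)\to g(s)$ as $t\to0$ (because $\tilde{S}_tg\to g$ uniformly and $\|\gamma_s(t)\|\to s$), the usual argument — as in the deduction of Corollary~\ref{cor1} from Theorem~\ref{thm1}, or in \cite{Sjolin} — reduces Theorem~\ref{thm3} to the following statement: for each $s_0>0$ there exist $\epsilon,\tau>0$ and a finite constant $C_{s_0}$ such that, for all $f\in\mathscr{S}(\R^n)_o$,
\begin{equation*}
\left(\int_{s_0-\epsilon}^{s_0+\epsilon}\Big(\sup_{|t|\le\tau}\big|\tilde{S}_tf(\|\gamma_s(t)\|)\big|\Big)^2 s^{n-1}\,ds\right)^{1/2}\le C_{s_0}\,\|f\|_{\dot{H}^{1/4}(\R^n)}.
\end{equation*}
Indeed, writing $(0,\infty)=\bigcup_{k\ge1}[1/k,k]$ and covering each compact annulus by finitely many such intervals gives convergence almost everywhere on $(0,\infty)$, hence on $\R^n$ (the origin being a null set); and $H^\beta(\R^n)_o\subset\dot{H}^{1/4}(\R^n)_o$ for $\beta\ge1/4$ by comparing $|\xi|^{1/2}$ with $(1+|\xi|^2)^\beta$.

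To prove the displayed estimate I would rescale parabolically. Putting $f_\rho(x):=f(\rho x)$, the change of variable $\lambda\mapsto\rho\lambda$ in~(\ref{schrodinger_soln_euclidean}) together with $\F f_\rho(\lambda)=\rho^{-n}\F f(\lambda/\rho)$ gives $\tilde{S}_{\tilde t}f_\rho(\tilde r)=\tilde{S}_{\rho^2\tilde t}f(\rho\tilde r)$, and — this is the crucial point — the relation~(\ref{curve_norm_condition}) is invariant: if $\tilde\gamma_{\tilde s}(\tilde t):=\rho^{-1}\gamma_{\rho\tilde s}(\rho^2\tilde t)$ then $\|\tilde\gamma_{\tilde s}(\tilde t)\|=\tilde s+C_7\tilde t^{1/2}$. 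Fix $s_0>0$ and shrink the neighbourhood from the hypothesis to $[s_0-\epsilon,s_0+\epsilon]\times[-\tau,\tau]$ with $\epsilon<s_0$ and $\tau<(s_0-\epsilon)^2/(4C_7^2)$ (if $C_7=0$ the curve is a vertical line and we are in the setting of \cite{Dewan}). Choose $\rho$ so large that $\delta_2:=(s_0+\epsilon)/\rho$ is as small as Theorem~\ref{thm2} requires, set $\delta_1:=(s_0-\epsilon)/\rho$, and extend $\tilde\gamma$ to all $\tilde s\in[0,\delta_2)$ by demanding $\|\tilde\gamma_{\tilde s}(\tilde t)\|=\tilde s+C_7\tilde t^{1/2}$ there — harmless, since the Schr\"odinger propagation of a radial function depends only on the distance, so only the norms $\|\tilde\gamma_{\tilde s}(\tilde t)\|$ enter — whence $\tilde\gamma$ satisfies $(\mathscr{H}_3)$ and $(\mathscr{H}_4)$ with the same constants. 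The inequality $\tau<(s_0-\epsilon)^2/(4C_7^2)$ is precisely the time-localization condition $0<\tau/\rho^2<(C_5\delta_1/(2C_4))^{1/\alpha}$, so Theorem~\ref{thm2} applies to $f_\rho$ and $\tilde\gamma$ on $[\delta_1,\delta_2]$ with time window $\tau/\rho^2$. Undoing the dilation via $s=\rho\tilde s$, $t=\rho^2\tilde t$ — under which $\tilde s^{n-1}\,d\tilde s=\rho^{-n}s^{n-1}\,ds$, the supremum over $|\tilde t|\le\tau/\rho^2$ becomes the supremum over $|t|\le\tau$, and $\tilde{S}_{\tilde t}f_\rho(\|\tilde\gamma_{\tilde s}(\tilde t)\|)=\tilde{S}_tf(\|\gamma_s(t)\|)$ — together with the homogeneity $\|f_\rho\|_{\dot{H}^{1/4}(\R^n)}=\rho^{1/4-n/2}\|f\|_{\dot{H}^{1/4}(\R^n)}$, yields exactly the displayed estimate with $C_{s_0}=C\,\rho^{1/4}$, where $C$ is the constant of Theorem~\ref{thm2}. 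The factor $\rho^{1/4}$ is harmless because $\rho$ is fixed once $s_0,\epsilon,\tau$ have been chosen.

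I do not anticipate a real obstacle here, since all of the hard work is done in Theorem~\ref{thm2}. The only points requiring care are the verification that the time-localization inequality of Theorem~\ref{thm2} survives the rescaling — which is exactly why one must be free to localize in time, i.e.\ to shrink $\tau$ — and the observation that, because Theorem~\ref{thm2} governs only annuli bounded away from the origin, one works with the intervals $[s_0-\epsilon,s_0+\epsilon]$ with $\epsilon<s_0$, rather than with a single neighbourhood of the whole half-line, and then passes to the countable union.
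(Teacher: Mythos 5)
Your proposal is correct and follows essentially the same route as the paper: verify $(\mathscr{H}_3)$ with $\alpha=1/2$ and $(\mathscr{H}_4)$ from~(\ref{curve_norm_condition}), exploit the parabolic-dilation invariance of the curve condition together with the scaling identity $\tilde{S}_{t}f(\|\gamma_s(t)\|)=\tilde{S}_{t/\rho^2}f_\rho(\|\gamma_{s/\rho}(t/\rho^2)\|)$ to transport the maximal estimate from a small annulus (where Theorem~\ref{thm2} applies) to an arbitrary annulus, and close with the homogeneity $\|f_\rho\|_{\dot H^{1/4}}=\rho^{1/4-n/2}\|f\|_{\dot H^{1/4}}$. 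The only cosmetic difference is that you localize around each $s_0$ and pass to a countable cover, whereas the paper phrases the reduction directly in terms of annuli $(r_1,r_2)$; the substance is identical, and you correctly check that the time-localization constraint of Theorem~\ref{thm2} is preserved under rescaling.
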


\begin{remark}
Our Euclidean results (Theorems \ref{thm2} and \ref{thm3}) can be interpreted as variations of results in \cite{CM} and \cite{Minguillon}, as imposing more symmetry on the intial data allows us to work with much lower regularity. 
\end{remark}

This article is organized as follows. In section $2$, we recall certain aspects of Euclidean Fourier Analysis, the essential preliminaries about Damek-Ricci spaces and Spherical Fourier Analysis thereon and a useful oscillatory integral estimate. Theorem \ref{counter-example} is proved in section $3$. Theorem \ref{thm1} is proved in section $4$. Theorems \ref{thm2} and \ref{thm3} are proved in section $5$. Finally, we conclude by making some remarks and posing some new problems in section $6$.

Throughout, the symbols `c' and `C' will denote positive constants whose values may change on each occurrence. The enumerated constants $c_1,c_2, \dots$ or $C_1,C_2, \dots$ will however be fixed throughout. $\N$ will denote the set of positive integers. For a set $E$, $\chi_E$ will denote the indicator function on $E$,
\begin{equation*}
\chi_E(x):=\begin{cases}
	 1\:  \text{ if } x \in E  \\
	0\:  \text{ if } x \notin E \:.
	\end{cases}
\end{equation*}

 Two non-negative functions $f_1$ and $f_2$ will be said to satisfy, 
\begin{itemize}
\item $f_1 \lesssim f_2$ (resp. $f_1 \gtrsim f_2$) if there exists a constant $C \ge 1$, so that
\begin{equation*}
f_1 \le C f_2 \:,\text{ (resp. } f_1 \ge C f_2 \text{)}\:.
\end{equation*}
\item $f_1 \asymp f_2$ if there exist constants $C,C'>0$, so that
\begin{equation*}
C f_1 \le f_2 \le C' f_1\:.
\end{equation*}
\end{itemize}

\section{Preliminaries}
In this section, we recall some preliminaries and fix our notations.
\subsection{Fourier Analysis on $\R^n$:}
In this subsection, we recall some Euclidean Fourier Analysis, most of which can be found in \cite{Grafakos, SW}.
On $\R$, for ``nice" functions $f$, the Fourier transform $\tilde{f}$ is defined as
\begin{equation*}
\tilde{f}(\xi)= \int_\R f(x)\: e^{-ix\xi}\:dx\:.
\end{equation*}
An important inequality in one-dimensional Fourier Analysis is the Pitt's inequality:
\begin{lemma} \cite[p. 489]{Stein} \label{Pitt's_ineq}
One has the inequality
\begin{equation*}
{\left(\int_\R {\left|\tilde{f}(\xi)\right|}^2 \:{|\xi|}^{-2 \beta} d\xi\right)}^{1/2} \lesssim {\left(\int_\R {\left|f(x)\right|}^p\: {|x|}^{\beta_1 p} dx\right)}^{1/p}\:,
\end{equation*}
where $\beta_1 = \beta + \frac{1}{2}- \frac{1}{p}$ and the following two conditions are satisfied:
\begin{equation*}
0 \le \beta_1 < 1 - \frac{1}{p}\:,\: \text{ and } 0 \le \beta < \frac{1}{2}\:.
\end{equation*}
\end{lemma}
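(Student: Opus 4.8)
The plan is to reduce to the dual, more tractable formulation of Pitt's inequality, namely the weighted $(L^{p'}, L^2)$-estimate for the Fourier transform acting between power-weighted spaces, and then prove it by a combination of Stein--Weiss fractional integration and interpolation. First I would rewrite the claimed inequality $\|\,|\xi|^{-\beta}\tilde f\,\|_{L^2} \lesssim \|\,|x|^{\beta_1}f\,\|_{L^p}$, with $\beta_1 = \beta + \tfrac12 - \tfrac1p$, by absorbing the weights: setting $g(x) = |x|^{\beta_1} f(x)$, the right side becomes $\|g\|_{L^p}$, and the left side becomes $\|\,|\xi|^{-\beta}\,\widehat{(|x|^{-\beta_1}g)}\,\|_{L^2}$. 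Thus the inequality is equivalent to the boundedness of the operator $g \mapsto |\xi|^{-\beta}\,\mathcal{F}(|x|^{-\beta_1}g)$ from $L^p(\R)$ to $L^2(\R)$. The exponent bookkeeping $\beta_1 p < p-1$ is exactly the condition $|x|^{-\beta_1} \in L^{p'}_{\mathrm{loc}}$-type scaling that makes the composition well-defined on Schwartz functions, and dimensional analysis (both sides scale the same way under $f(x) \mapsto f(\lambda x)$) forces the relation $\beta_1 = \beta + \tfrac12 - \tfrac1p$, so there is a unique candidate exponent.

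Next I would dualize: the $L^p \to L^2$ bound above is equivalent, by $L^2$-self-duality and duality of the Fourier transform, to the estimate $\|\,|x|^{-\beta_1}\,\mathcal{F}(|\xi|^{-\beta} h)\,\|_{L^{p'}} \lesssim \|h\|_{L^2}$. Writing $\mathcal{F}(|\xi|^{-\beta}h) = \mathcal{F}(|\xi|^{-\beta}) * \mathcal{F}(h)$ is not quite clean because $|\xi|^{-\beta}$ is only a tempered distribution, so instead I would argue directly via the Hardy--Littlewood--Sobolev / Stein--Weiss inequality. Recall that $\mathcal{F}(|\xi|^{-\beta})$ is, up to a constant, $|x|^{\beta - 1}$ on $\R$. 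So formally the operator is $h \mapsto |x|^{-\beta_1}\big(|x|^{\beta-1} * \hat h\big)$, i.e. a fractional integration operator of order $1-\beta \in (\tfrac12, 1]$ with an extra power weight $|x|^{-\beta_1}$ on the outside, applied to $\hat h \in L^2$ (by Plancherel). The Stein--Weiss inequality states precisely that the map $F \mapsto |x|^{-a}\big(|x|^{-\gamma} * F\big)$ is bounded $L^q \to L^r$ whenever $0 \le a$, $0 < \gamma < 1$ (in dimension one), $a + \gamma/1 < 1/q' \cdot$(appropriate normalization) and $1/r = 1/q + (a + \gamma)/1 - 1$, with the endpoint constraints $a < 1/r$ and $a + \gamma < 1$. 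Matching parameters: here $q = 2$, $\gamma = 1-\beta$, $a = \beta_1$, and one checks $1/r = 1/2 + (\beta_1 + 1 - \beta) - 1 = 1/2 + (\tfrac12 - \tfrac1p) = 1 - \tfrac1p = 1/p'$, which is exactly what we need; the constraint $a < 1/r$ becomes $\beta_1 < 1/p'$, i.e. $\beta_1 p' < 1$... wait, $\beta_1 < 1 - 1/p$, which is the stated hypothesis, and $a + \gamma < 1$ becomes $\beta_1 + 1 - \beta < 1$, i.e. $\beta_1 < \beta$, i.e. $\tfrac12 - \tfrac1p < 0$, which for $p < 2$ is automatic and is encoded in $\beta < \tfrac12$ together with $\beta_1 \ge 0$. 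So the hypotheses of the Lemma are exactly the hypotheses needed to invoke Stein--Weiss.

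The remaining step is simply to assemble: by Plancherel, $\|\hat h\|_{L^2} = \|h\|_{L^2}$ up to a constant, and the Riesz-potential representation $\widehat{|\xi|^{-\beta}} = c_\beta |x|^{\beta - 1}$ (valid as tempered distributions for $0 < \beta < 1$, which holds since $0 \le \beta < \tfrac12$; the degenerate case $\beta = 0$ is just Hausdorff--Young / Plancherel and is handled separately or by a limiting argument). Then the chain $\|\,|x|^{-\beta_1}\mathcal{F}(|\xi|^{-\beta}h)\,\|_{L^{p'}} = c\,\|\,|x|^{-\beta_1}(\,|x|^{\beta-1} * \hat h)\,\|_{L^{p'}} \le C\|\hat h\|_{L^2} = C\|h\|_{L^2}$ closes the argument, and undoing the dualization gives the stated inequality. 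The main obstacle I anticipate is purely technical bookkeeping: making the distributional convolution $|x|^{\beta-1} * \hat h$ rigorous on the dense class of Schwartz functions (where everything is an absolutely convergent integral) and then extending by density, and being careful that the endpoint exclusions in Stein--Weiss line up precisely with the stated strict inequalities $\beta_1 < 1 - 1/p$ and $\beta < 1/2$ rather than being off by an endpoint. Since the paper only needs this as a black-box tool and cites Stein \cite{Stein}, I would in fact keep the write-up short and simply reference the Stein--Weiss inequality, noting the parameter identifications above; a fully self-contained proof would reproduce the one-dimensional Stein--Weiss argument, which is itself a dyadic decomposition plus Schur test, but that is standard and I would not grind through it here.
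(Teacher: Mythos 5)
The paper does not prove Lemma~\ref{Pitt's_ineq} at all: it is cited as a black-box result from Stein~\cite[p.~489]{Stein}, so there is no ``paper proof'' to compare against. Your sketch via duality plus Stein--Weiss is the standard textbook route and is essentially correct for the range $p\le 2$ (which includes the only case the paper actually uses, $p=2$, $\beta=\beta_1=1/4$). However, three technical points deserve care.

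First, a nomenclature slip: the kernel $|x|^{\beta-1}=|x|^{-(1-\beta)}$ corresponds to a Riesz potential of \emph{order} $\beta$, not $1-\beta$; what equals $1-\beta$ is the exponent of the kernel singularity. This is harmless since you plug the kernel exponent correctly into the Stein--Weiss bookkeeping.

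Second, and more seriously, the purported constraint ``$a+\gamma<1$'' is not a genuine hypothesis of the Stein--Weiss inequality, and you should not have kept it. Your own computation shows it is equivalent to $\beta_1<\beta$, i.e.\ $p<2$, which \emph{fails} at $p=2$ --- exactly the case the paper applies the lemma to. If that inequality were really required, your argument would not cover the paper's application. Fortunately the genuine Stein--Weiss hypotheses are only the scaling relation, $0<\gamma<n$, the nonnegativity $\delta+\sigma\ge 0$, and the strict one-sided weight bounds $\delta<n/r$ and $\sigma<n/q'$; in your normalization these become precisely $0\le\beta_1$, $\beta_1<1-1/p$, and (automatically) $0<1/2$, so the lemma's stated conditions are the correct ones and nothing extra is needed. (Note also that, given the scaling identity $\beta_1=\beta+\tfrac12-\tfrac1p$, the two conditions $\beta_1<1-1/p$ and $\beta<1/2$ are equivalent; the paper states both for clarity.)

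Third, at $p=2$ the dualized estimate has $p_{\mathrm{SW}}=q_{\mathrm{SW}}=2$, the boundary of the range $p_{\mathrm{SW}}\le q_{\mathrm{SW}}$; there the inequality is not Hardy--Littlewood--Sobolev but a Hardy-type weighted $L^2\to L^2$ bound for fractional integration with $\delta+\sigma=\lambda$, which holds under $\delta<n/2$, $\sigma<n/2$, i.e.\ $\beta<1/2$. Your write-up should make explicit that the endpoint $p=2$ is included in this way. For $p>2$ the dualization runs $L^2\to L^{p'}$ with $p'<2$ and the Stein--Weiss framework does not apply directly, so your proposal would not cover that sub-range of the stated lemma; but since the paper only invokes $p=2$, this does not affect the paper. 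With these corrections, citing Stein--Weiss (or Stein's original proof by complex interpolation between the Hausdorff--Young endpoints with power weights) is the appropriate and complete justification.
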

A $C^\infty$ function $f$ on $\R$ is called a Schwartz class function if 
\begin{equation*}
\left|{\left(\frac{d}{dx}\right)}^M f(x)\right| \lesssim {(1+|x|)}^{-N} \:, \text{ for any } M, N \in \N \cup \{0\}\:.
\end{equation*}
We denote by $\mathscr{S}(\R)$ the class of all such functions and $\mathscr{S}(\R)_{e}$ will denote the collection of all even Schwartz class functions on $\R$.

Let $\beta \in \C$ with $Re(\beta)>0$. The Riesz potential of order $\beta$ is the operator
\begin{equation*}
I_\beta= {(-\Delta_{\R})}^{-\beta/2}\:.
\end{equation*}
One can also write as,
\begin{equation*}
I_\beta(f)(x)=C_\beta \int_\R f(y)\: {|x-y|}^{\beta -1}\:dy\:,
\end{equation*}
for some $C_\beta>0$, whenever $f \in \mathscr{S}(\R)$. For $\beta >0$, the Fourier transform of the Riesz potential of $f \in \mathscr{S}(\R)$ satisfies the following identity:
\begin{equation} \label{riesz_identity}
(I_\beta(f))^\sim(\xi)= C_\beta {|\xi|}^{-\beta}\tilde{f}(\xi)\:.
\end{equation}

For a radial function $f$ in $\R^n$ (for $n \ge 2$), the Euclidean Spherical Fourier transform is defined as
\begin{equation*}
\mathscr{F}f(\lambda):= \int_0^\infty f(s) \J_{\frac{n-2}{2}}(\lambda s)\: s^{n-1}\: ds\:, 
\end{equation*}
where for all $\mu \ge 0$,
\begin{equation*}
\J_\mu(z)= 2^\mu \: \pi^{1/2} \: \Gamma\left(\mu + \frac{1}{2} \right) \frac{J_\mu(z)}{z^\mu},
\end{equation*}
and $J_\mu$ are the Bessel functions \cite[p. 154]{SW}.

The class of radial Schwartz class functions on $\R^n$, denoted by $\mathscr{S}(\R^n)_{o}$, is defined to be the collection of $f \in C^\infty(\R^n)_{o}$ such that 
\begin{equation*} 
\left|{\left(\frac{d}{ds}\right)}^M f(s)\right| \lesssim {(1+s)}^{-N} \:, \text{ for any } M, N \in \N \cup \{0\}\:,
\end{equation*}
where $s=\|x\|$, is the distance of $x$ from the origin. $\mathscr{F}: \mathscr{S}(\R^n)_{o} \to \mathscr{S}(\R)_{e}$
defines a topological isomorphism.

\subsection{Damek-Ricci spaces and spherical Fourier Analysis thereon:}
In this section, we will explain the notations and state relevant results on Damek-Ricci spaces. Most of these results can be found in \cite{ADY, APV, A}.

Let $\mathfrak n$ be a two-step real nilpotent Lie algebra equipped with an inner product $\langle, \rangle$. Let $\mathfrak{z}$ be the center of $\mathfrak n$ and $\mathfrak v$ its orthogonal complement. We say that $\mathfrak n$ is an $H$-type algebra if for every $Z\in \mathfrak z$ the map $J_Z: \mathfrak v \to \mathfrak v$ defined by
\begin{equation*}
\langle J_z X, Y \rangle = \langle [X, Y], Z \rangle, \:\:\:\: X, Y \in \mathfrak v
\end{equation*}
satisfies the condition $J_Z^2 = -|Z|^2I_{\mathfrak v}$, $I_{\mathfrak v}$ being the identity operator on $\mathfrak v$. A connected and simply connected Lie group $N$ is called an $H$-type group if its Lie algebra is $H$-type. Since $\mathfrak n$ is nilpotent, the exponential map is a diffeomorphism
and hence we can parametrize the elements in $N = \exp \mathfrak n$ by $(X, Z)$, for $X\in \mathfrak v, Z\in \mathfrak z$. It follows from the Baker-Campbell-Hausdorff formula that the group law in $N$ is given by
\begin{equation*}
\left(X, Z \right) \left(X', Z' \right) = \left(X+X', Z+Z'+ \frac{1}{2} [X, X']\right), \:\:\:\: X, X'\in \mathfrak v; ~ Z, Z'\in \mathfrak z.
\end{equation*}
The group $A = \R^+$ acts on an $H$-type group $N$ by nonisotropic dilation: $(X, Z) \mapsto (\sqrt{a}X, aZ)$. Let $S = NA$ be the semidirect product of $N$ and $A$ under the above action. Thus the multiplication in $S$ is given by
\begin{equation*}
\left(X, Z, a\right)\left(X', Z', a'\right) = \left(X+\sqrt aX', Z+aZ'+ \frac{\sqrt a}{2} [X, X'], aa' \right),
\end{equation*}
for $X, X'\in \mathfrak v; ~ Z, Z'\in \mathfrak z; a, a' \in \R^+$.
Then $S$ is a solvable, connected and simply connected Lie group having Lie algebra $\mathfrak s = \mathfrak v \oplus \mathfrak z \oplus \R$ with Lie bracket
\begin{equation*}
\left[\left(X, Z, l \right), \left(X', Z', l' \right)\right] = \left(\frac{1}{2}lX' - \frac{1}{2} l'X, lZ'-lZ + [X, X'], 0\right).
\end{equation*}

We write $na = (X, Z, a)$ for the element $\exp(X + Z)a, X\in \mathfrak v, Z \in \mathfrak z, a\in A$. We note
that for any $Z \in \mathfrak z$ with $|Z| = 1$, $J_Z^2 = -I_{\mathfrak v}$; that is, $J_Z$ defines a complex structure
on $\mathfrak v$ and hence $\mathfrak v$ is even dimensional. $m_\mv$ and $m_z$ will denote the dimension of $\mv$ and $\z$ respectively. Let $n$ and $Q$ denote dimension and the homogenous dimension of $S$ respectively:
\begin{equation*}
n=m_{\mv}+m_\z+1 \:\: \textit{ and } \:\: Q = \frac{m_\mv}{2} + m_\z.
\end{equation*}

The group $S$ is equipped with the left-invariant Riemannian metric induced by
\begin{equation*}
\langle (X,Z,l), (X',Z',l') \rangle = \langle X, X' \rangle + \langle Z, Z' \rangle + ll'
\end{equation*}
on $\mathfrak s$. For $x \in S$, we denote by $s=d(e,x)$, that is, the geodesic distance of $x$ from the identity $e$. Then the left Haar measure $dx$ of the group $S$ may be normalized so that
\begin{equation*}
dx= A(s)\:ds\:d\sigma(\omega)\:,
\end{equation*}
where $A$ is the density function given by,
\begin{equation} \label{density_function}
A(s)= 2^{m_\mv + m_\z} \:{\left(\sinh (s/2)\right)}^{m_\mv + m_\z}\: {\left(\cosh (s/2)\right)}^{m_\z} \:,
\end{equation}
and $d\sigma$ is the surface measure of the unit sphere.

A function $f: S \to \C$ is said to be radial if, for all $x$ in $S$, $f(x)$ depends only on the geodesic distance of $x$ from the identity $e$. If $f$ is radial, then
\begin{equation*}
\int_S f(x)~dx=\int_{0}^\infty f(s)~A(s)~ds\:.
\end{equation*}

We now recall the spherical functions on Damek-Ricci spaces. The spherical functions $\varphi_\lambda$ on $S$, for $\lambda \in \C$ are the radial eigenfunctions of the Laplace-Beltrami operator $\Delta$, satisfying the following normalization criterion
\begin{equation*}
\begin{cases}
 & \Delta \varphi_\lambda = - \left(\lambda^2 + \frac{Q^2}{4}\right) \varphi_\lambda  \\
& \varphi_\lambda(e)=1 \:.
\end{cases}
\end{equation*}
For all $\lambda \in \R$ and $x \in S$, the spherical functions satisfy
\begin{equation*}
\varphi_\lambda(x)=\varphi_\lambda(s)= \varphi_{-\lambda}(s)\:.
\end{equation*}
It also satisfies for all $\lambda \in \R$ and all $s \ge 0$:
\begin{equation} \label{phi_lambda_bound}
\left|\varphi_\lambda(s)\right| \le 1\:.
\end{equation}

The spherical functions are crucial as they help us define the Spherical Fourier transform of a ``nice" radial function $f$ (on $S$) in the following way:
\begin{equation*}
\widehat{f}(\lambda):= \int_S f(x) \varphi_\lambda(x) dx = \int_0^\infty f(s) \varphi_\lambda(s) A(s) ds\:.
\end{equation*}
The Harish-Chandra ${\bf c}$-function is defined as
\begin{equation*}
{\bf c}(\lambda)= \frac{2^{(Q-2i\lambda)} \Gamma(2i\lambda)}{\Gamma\left(\frac{Q+2i\lambda}{2}\right)} \frac{\Gamma\left(\frac{n}{2}\right)}{\Gamma\left(\frac{m_\mv + 4i\lambda+2}{4}\right)}\:,
\end{equation*}
for all $\lambda \in \R$. We will need the following pointwise estimates (see \cite[Lemma 4.8]{RS}):
\begin{equation} \label{plancherel_measure}
{|{\bf c}(\lambda)|}^{-2} \asymp \:{|\lambda|}^2 {\left(1+|\lambda|\right)}^{n-3}\:,
\end{equation}
and for $j \in \N \cup \{0\}$, the
derivative estimates (\cite[Lemma 4.2]{A}):
\begin{equation}\label{c-fn_derivative_estimates}
\left|\frac{d^j}{d \lambda^j}{|{\bf c}(\lambda)|}^{-2}\right| \lesssim_j {(1+|\lambda|)}^{n-1-j} \:, \:\: \lambda \in \R.
\end{equation}

One has the following inversion formula (when valid) for radial functions:
\begin{equation*}
f(x)= C \int_{0}^\infty \widehat{f}(\lambda)\varphi_\lambda(x) {|{\bf c}(\lambda)|}^{-2}
d\lambda\:,
\end{equation*}
where $C$ depends only on $m_\mv$ and $m_\z$. Moreover, the Spherical Fourier transform extends to an isometry from the space of radial $L^2$ functions on $S$ onto $L^2\left((0,\infty),C{|{\bf c}(\lambda)|}^{-2} d\lambda\right)$. 

The class of radial $L^2$-Schwartz class functions on $S$, denoted by $\mathscr{S}^2(S)_{o}$, is defined to be the collection of $f \in C^\infty(S)_{o}$ such that 
\begin{equation} \label{schwartz_defn}
\left|{\left(\frac{d}{ds}\right)}^M f(s)\right| \lesssim {(1+s)}^{-N} e^{-\frac{Q}{2}s}\:, \text{ for any } M, N \in \N \cup \{0\}\:,
\end{equation}
(see \cite[p. 652]{ADY}). One has the following commutative diagram, where every map is a topological isomorphism:
\begin{itemize}
\item $\mathscr{A}_{S,\R}$ is the Abel transform defined from $\mathscr{S}^2(S)_{o}$ to ${\mathscr{S}(\R)}_{e}$\:.
\item $\wedge$ denotes the Spherical Fourier transform from $\mathscr{S}^2(S)_{o}$ to ${\mathscr{S}(\R)}_{e}$\:.
\item $\sim$ denotes the 1-dimensional Euclidean Fourier transform from ${\mathscr{S}(\R)}_{e}$ to itself.
\end{itemize}

\[
\begin{tikzcd}[row sep=1.4cm,column sep=1.4cm]
\mathscr{S}^2(S)_{o}\arrow[r,"\mathscr{A}_{S,\R}"] \arrow[dr,swap,"\wedge"] & {\mathscr{S}(\R)}_{e}
\arrow[d,"\sim"] \\
&  {\mathscr{S}(\R)}_{e}&
\end{tikzcd}
\]

For more details regarding the Abel transform, we refer to \cite[p. 652-653]{ADY}. These have been generalized to the setting of Ch\'ebli-Trim\`eche Hypergroups \cite{BX}, whose simplest case is that of radial functions on $\R^n$. 

For the purpose of our article, we will require both the Bessel series expansion as well as another series expansion (similar to the Harish-Chandra series expansion) of the spherical functions. We first see an expansion of $\varphi_\lambda$ in terms of Bessel functions for points near the identity. But before that, let us define the following normalizing constant in terms of the Gamma functions,
\begin{equation*}
c_0 = 2^{m_\z}\: \pi^{-1/2}\: \frac{\Gamma(n/2)}{\Gamma((n-1)/2)}\:.
\end{equation*}
\begin{lemma}\cite[Theorem 3.1]{A} \label{bessel_series_expansion}
There exist $R_0, 2<R_0<2R_1$, such that for any $0 \le s \le R_0$, and any integer $M \ge 0$, and all $\lambda \in \R$, we have
\begin{equation*}
\varphi_\lambda(s)= c_0 {\left(\frac{s^{n-1}}{A(s)}\right)}^{1/2} \displaystyle\sum_{l=0}^M a_l(s)\J_{\frac{n-2}{2}+l}(\lambda s) s^{2l} + E_{M+1}(\lambda,s)\:,
\end{equation*}
where
\begin{equation*}
a_0 \equiv 1\:,\: |a_l(s)| \le C {(4R_1)}^{-l}\:,
\end{equation*}
and the error term has the following behaviour
\begin{equation*}
	\left|E_{M+1}(\lambda,s) \right| \le C_M \begin{cases}
	 s^{2(M+1)}  & \text{ if  }\: |\lambda s| \le 1 \\
	s^{2(M+1)} {|\lambda s|}^{-\left(\frac{n-1}{2} + M +1\right)} &\text{ if  }\: |\lambda s| > 1 \:.
	\end{cases}
\end{equation*}
Moreover, for every $0 \le s <2$, the series
\begin{equation*}
\varphi_\lambda(s)= c_0 {\left(\frac{s^{n-1}}{A(s)}\right)}^{1/2} \displaystyle\sum_{l=0}^\infty a_l(s)\J_{\frac{n-2}{2}+l}(\lambda s) s^{2l}\:,
\end{equation*}
is absolutely convergent.
\end{lemma}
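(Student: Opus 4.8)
The plan is to prove Lemma~\ref{bessel_series_expansion} by comparing the radial eigenvalue equation for $\varphi_\lambda$ with the Euclidean (Bessel) one and propagating that comparison as a convergent series. Recall that $\varphi_\lambda$ is the unique solution, smooth at the origin and normalised by $\varphi_\lambda(0)=1$, of the radial equation
\[
\varphi_\lambda''+\frac{A'(s)}{A(s)}\,\varphi_\lambda'+\Bigl(\lambda^{2}+\tfrac{Q^{2}}{4}\Bigr)\varphi_\lambda=0 .
\]
By \eqref{density_function} one has $A(s)=s^{\,n-1}\mathcal B(s)$ with $\mathcal B(s)=\bigl(2\sinh(s/2)/s\bigr)^{m_{\mv}+m_{\z}}\bigl(\cosh(s/2)\bigr)^{m_{\z}}$, an even function, holomorphic and zero-free on a disc centred at $0$ whose radius is governed by the nearest zeros of $\sinh(s/2)$ and $\cosh(s/2)$; the constants $R_{0},R_{1}$ in the statement, with $2<R_{0}<2R_{1}$, are fixed constants determined by this analyticity. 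First I would perform the Liouville substitution $\varphi_\lambda=c_{0}\,\mathcal B^{-1/2}\psi$, which removes the drift produced by $\mathcal B$ and turns the equation into
\[
L_{0}\psi+\lambda^{2}\psi+W(s)\psi=0,\qquad L_{0}:=\partial_{s}^{2}+\tfrac{n-1}{s}\partial_{s},
\]
where $W=\tfrac{Q^{2}}{4}-\tfrac14 b^{2}-\tfrac12 b'-\tfrac{n-1}{2}\tfrac{b}{s}$ with $b:=(\log\mathcal B)'$. Since $\mathcal B$ is even with $\mathcal B(0)=1$, the function $b$ is odd and $b(0)=0$, so $b/s$ is holomorphic; hence $W$ is even and holomorphic near $0$ and — crucially — carries no $s^{-2}$ term, so the perturbation does not shift the Bessel index $\mu:=\tfrac{n-2}{2}$, for which $L_{0}v+\lambda^{2}v=0$ has the regular solution $\J_{\mu}(\lambda s)$ with value $1$ at $0$.

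Next I would plug in the ansatz $\psi(s)=\sum_{l\ge0}a_{l}(s)\,s^{2l}\J_{\mu+l}(\lambda s)$ and use the standard Bessel contiguity relations. Writing $g_{l}:=s^{2l}\J_{\mu+l}(\lambda s)$, a computation with $z\,\J_{\nu}'(z)=(2\nu-1)\J_{\nu-1}(z)-2\nu\,\J_{\nu}(z)$ and the Bessel equation gives the key identities (for $l\ge1$)
\[
(L_{0}+\lambda^{2})\,g_{l}=2l\,(2\mu+2l-1)\,g_{l-1},\qquad g_{l}'=-\tfrac{2\mu}{s}g_{l}+(2\mu+2l-1)s\,g_{l-1},
\]
both with $\lambda$-independent coefficients. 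Consequently $(L_{0}+\lambda^{2})(a_{l}g_{l})$ is a $\lambda$-free combination of $g_{l}$ and $g_{l-1}$ alone, while $Wa_{l}g_{l}$ stays at ``level $l$''; collecting the coefficient of $g_{l}$ in $L_{0}\psi+\lambda^{2}\psi+W\psi=0$ yields the triangular recursion $a_{0}\equiv1$ and, for $l\ge0$,
\[
\bigl(s^{\,l+1}a_{l+1}\bigr)'=\frac{s^{l}}{2(2\mu+2l+1)}\Bigl(-W a_{l}-a_{l}''-\tfrac{3-n}{s}a_{l}'\Bigr),
\]
which, as each $a_{l}$ is even with $a_{l}'(0)=0$, determines $a_{l+1}$ by one integration from $0$. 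By induction the $a_{l}$ are holomorphic near $0$, and a Cauchy majorant-series estimate — dominating the holomorphic coefficients by geometric majorants with radius tied to that of $\mathcal B^{-1/2}$, and exploiting the gain $1/(2(2\mu+2l+1))$ at each step — gives the bound $|a_{l}(s)|\le C(4R_{1})^{-l}$ on the relevant interval, with $a_{0}\equiv1$ as asserted.

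With $|a_{l}(s)|\le C(4R_{1})^{-l}$ in hand, absolute convergence of $\sum_{l}a_{l}(s)s^{2l}\J_{\mu+l}(\lambda s)$ for $0\le s<2$ and all real $\lambda$ follows at once from the uniform estimate $|\J_{\mu+l}(\lambda s)|\lesssim(1+l)^{-1/2}$ together with the geometric factor $(s^{2}/4R_{1})^{l}$. For the error term I would truncate and use a Duhamel representation rather than summing the tail directly: with $\Phi_{M}(\lambda,s):=c_{0}\mathcal B(s)^{-1/2}\sum_{l=0}^{M}a_{l}(s)s^{2l}\J_{\mu+l}(\lambda s)$, the recursion shows that the radial operator $\partial_{s}^{2}+\tfrac{A'}{A}\partial_{s}+\lambda^{2}+\tfrac{Q^{2}}{4}$ applied to $\Phi_{M}$ produces a single residual term of the form $(\text{bounded holomorphic})\cdot s^{2M}\J_{\mu+M}(\lambda s)$; hence $E_{M+1}=\varphi_\lambda-\Phi_{M}$ solves the inhomogeneous radial equation with this source and vanishing Cauchy data at $s=0$, and one integration against the Green's kernel built from two homogeneous solutions — combined with $|\J_{\nu}(z)|\lesssim1$ for $|z|\le1$ and $|\J_{\nu}(z)|\lesssim_{\nu}|z|^{-\nu-1/2}$ for $|z|\ge1$ — gains the extra power $s^{2}$ in the regime $|\lambda s|\le1$ and the Bessel decay in the regime $|\lambda s|>1$, reproducing after elementary estimates the two-case bound stated for $E_{M+1}$.

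I expect the main obstacle to be the pair of \emph{uniform} quantitative estimates. First, proving $|a_{l}(s)|\le C(4R_{1})^{-l}$ with the constant $4R_{1}$ genuinely tied to the domain of holomorphy of $\mathcal B^{-1/2}$: a naive iteration of Cauchy estimates collapses the domain after infinitely many steps, so one really needs a majorant/generating-function argument to keep the constant geometric in $l$. Second, controlling the error uniformly in $\lambda$, which forces one to track $\J_{\mu+M}(\lambda s)$ through the oscillatory, transitional and exponentially small Bessel regimes rather than relying on crude asymptotics. A less self-contained alternative is to note that $\varphi_\lambda$ is the Jacobi function $\varphi_\lambda^{(\alpha,\beta)}$ with parameters $\alpha=\tfrac{m_{\mv}+m_{\z}-1}{2}=\tfrac{n-2}{2}$ and $\beta=\tfrac{m_{\z}-1}{2}$, and to invoke the classical expansions of Jacobi functions into Bessel series near the origin; Lemma~\ref{bessel_series_expansion} is precisely the quantitative, uniform-in-$\lambda$ form of such an expansion carried out in \cite{A}.
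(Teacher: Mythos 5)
The paper does not prove this lemma; it imports it verbatim from Astengo \cite[Theorem 3.1]{A}, which in turn follows the Stanton--Tomas \cite{ST} expansion of spherical functions and Koornwinder's Bessel expansion of Jacobi functions, so there is no in-paper argument to compare against. Your reconstruction correctly recovers that strategy: writing $A(s)=s^{n-1}\mathcal B(s)$ and substituting $\varphi_\lambda=c_0\mathcal B^{-1/2}\psi$ does reduce the radial equation to $(L_0+\lambda^2+W)\psi=0$ with $L_0=\partial_s^2+\frac{n-1}{s}\partial_s$ and an even, holomorphic $W$ carrying no $s^{-2}$ singularity; the contiguity identities $(L_0+\lambda^2)g_l=2l(2\mu+2l-1)g_{l-1}$ and $g_l'=-\frac{2\mu}{s}g_l+(2\mu+2l-1)s\,g_{l-1}$ for $g_l=s^{2l}\J_{\mu+l}(\lambda s)$ check out, and they do produce the stated first-order triangular recursion $\bigl(s^{l+1}a_{l+1}\bigr)'=\frac{s^l}{2(2\mu+2l+1)}\bigl(-Wa_l-a_l''-\frac{3-n}{s}a_l'\bigr)$ with $a_0\equiv1$. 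You have also correctly identified the Jacobi parameters $\alpha=\frac{n-2}{2}$, $\beta=\frac{m_\z-1}{2}$ and, more importantly, the two genuinely nontrivial quantitative steps that Astengo's proof must (and does) address: the geometric bound $|a_l(s)|\le C(4R_1)^{-l}$ requires a genuine majorant/generating-function argument since iterated Cauchy estimates on shrinking disks will not preserve a geometric rate, and the uniform-in-$\lambda$ bound on $E_{M+1}$ requires tracking $\J_{\mu+M}$ through both Bessel regimes, which your Duhamel/Green's-kernel scheme accomplishes. In short, the outline is correct and is essentially the argument of the cited reference, with the hard work concentrated exactly where you say it is.
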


For the asymptotic behaviour of the spherical functions when the distance from the identity is large, we look at the following series expansion \cite[pp. 735-736]{APV}:
\begin{equation} \label{anker_series_expansion}
\varphi_\lambda(s)= 2^{-m_\z/2} {A(s)}^{-1/2} \left\{{\bf c}(\lambda)  \displaystyle\sum_{\mu=0}^\infty \Gamma_\mu(\lambda) e^{(i \lambda-\mu) s} + {\bf c}(-\lambda) \displaystyle\sum_{\mu=0}^\infty \Gamma_\mu(-\lambda) e^{-(i\lambda + \mu) s}\right\}\:.
\end{equation}
The above series converges for $\lambda \in \R \setminus \{0\}$, uniformly on compacts not containing the group identity, where $\Gamma_0 \equiv 1$ and for $\mu \in \N$, one has the recursion formula,
\begin{equation*}
(\mu^2-2i\mu\lambda) \Gamma_\mu(\lambda) = \displaystyle\sum_{j=0}^{\mu -1}\omega_{\mu -j}\Gamma_{j}(\lambda)\:.
\end{equation*}
Then one has the following estimate on the coefficients \cite[Lemma 1]{APV}, for constants $C>0, d \ge 0$:
\begin{equation} \label{coefficient_estimate}
\left|\Gamma_\mu(\lambda)\right| \le C \mu^d {\left(1+|\lambda|\right)}^{-1}\:,
\end{equation}
for all $\lambda \in \R \setminus \{0\}, \mu \in \N$.

The relevant preliminaries for the degenerate case of the Real hyperbolic spaces can be found in \cite{Anker, ST, AP}. 

\subsection{Estimate of an Oscillatory integral}
We will also require the following oscillatory integral estimate, which can be obtained by proceeding exactly as in the proof of Lemma $2.1$ in \cite{DN}: 
\begin{lemma} \label{oscillatory_integral_lemma}
Let $0<\delta_1 <\delta_2< \infty$. Let $\kappa(\cdot,\cdot) : [0,\infty) \times \R \to [0,\infty)$ satisfy
\begin{eqnarray*}
&&|\kappa(s,t)-\kappa(s,t')| \lesssim \: {|t-t'|}^\alpha \:,\\
&&|\kappa(s,t)-\kappa(s',t)| \: \asymp \: {|s-s'|} \:,
\end{eqnarray*}
for $\alpha \in [\frac{1}{2},1]$, $s,s' \in [0,\delta_2)$ and $t,t' \in [-T,T]$ with some fixed $T>0$. Assume that $t(\cdot): (\delta_1,\delta_2) \to [-T,T]$ is a measurable function. If $\mu \in C^\infty_c[0,\infty)$, then for any $C>0$,
\begin{equation*}
\left|\int_0^\infty e^{i\left\{\lambda\left(\kappa(s',t(s'))-\kappa(s,t(s))\right)+(t(s')-t(s))\left(\lambda^2+C\right)\right\}}\: \lambda^{-\frac{1}{2}}\: \mu\left(\frac{\lambda}{N}\right)\:d\lambda \right| \lesssim \frac{1}{{|s-s'|}^{1/2}}\:,
\end{equation*}
for all $s,s' \in (\delta_1,\delta_2)$ and $N \in \N$. The implicit constant in the conclusion depends only on $\mu$. 
\end{lemma}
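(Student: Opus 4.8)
\textbf{Proof strategy for Lemma \ref{oscillatory_integral_lemma}.}

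The plan is to follow the template of Lemma 2.1 in \cite{DN} and reduce matters to a clean one-dimensional stationary/non-stationary phase estimate in the frequency variable $\lambda$. Fix $s,s' \in (\delta_1,\delta_2)$ and write $\tau = t(s') - t(s)$ and $\rho = \kappa(s',t(s')) - \kappa(s,t(s))$, so the integral is
\begin{equation*}
I(s,s') = \int_0^\infty e^{i(\lambda \rho + \tau \lambda^2 + \tau C)}\, \lambda^{-1/2}\, \mu\!\left(\frac{\lambda}{N}\right)\, d\lambda\:,
\end{equation*}
where the constant factor $e^{i\tau C}$ has modulus one and may be discarded. The key geometric input is the two hypotheses on $\kappa$: combining $|\kappa(s,t)-\kappa(s',t)| \asymp |s-s'|$ with the H\"older bound $|\kappa(s,t)-\kappa(s,t')| \lesssim |t-t'|^\alpha$ and the triangle inequality, we can compare $|\rho|$ with $|s-s'|$ up to an additive error of size $|\tau|^\alpha$. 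I would split into two regimes according to whether the quadratic term dominates; the threshold $\alpha \ge 1/2$ is exactly what makes the book-keeping close.

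In the first regime, where $|\tau|$ is small relative to $|s-s'|$ (more precisely $|\tau|^{1/2} \lesssim |s-s'|$, using $\alpha\ge 1/2$ so that $|\tau|^\alpha \le |\tau|^{1/2}\,(2T)^{\alpha - 1/2}$ on the bounded time interval), the phase derivative $\rho + 2\tau\lambda$ is bounded below by a constant multiple of $|s-s'|$ on the support of $\mu(\cdot/N)$, since the $2\tau\lambda$ contribution is $O(|\tau| N)$ while $\mu$ is compactly supported — here one uses that $|\rho| \gtrsim |s-s'| - C|\tau|^\alpha \gtrsim |s-s'|$ after possibly shrinking the relevant constants, so non-stationary phase (one integration by parts, controlling the derivative of $\lambda^{-1/2}\mu(\lambda/N)$) yields $|I(s,s')| \lesssim |s-s'|^{-1}$, which is stronger than the claimed $|s-s'|^{-1/2}$ on the bounded annulus $(\delta_1,\delta_2)$ where $|s-s'|^{-1} \asymp |s-s'|^{-1/2} \cdot |s-s'|^{-1/2}$ is bounded by a constant times $|s-s'|^{-1/2}$; I should be slightly careful here and instead note that $|I| \lesssim \min(1, |s-s'|^{-1})$ suffices. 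In the complementary regime $|\tau|^{1/2} \gtrsim |s-s'|$, the quadratic term governs: by van der Corput's lemma (the second-derivative version, since $\partial_\lambda^2(\lambda\rho+\tau\lambda^2) = 2\tau$) one gets $|I(s,s')| \lesssim |\tau|^{-1/2}$ modulo handling the weight $\lambda^{-1/2}$, which near $\lambda = 0$ is integrable and away from $0$ is smooth with controlled derivatives; invoking $|\tau|^{-1/2} \lesssim |s-s'|^{-1}$ (hence $\lesssim |s-s'|^{-1/2}$ after again absorbing a bounded factor) — or more honestly, combining the van der Corput output $|\tau|^{-1/2}$ with the regime hypothesis $|s-s'| \lesssim |\tau|^{1/2}$ gives exactly $|I| \lesssim |s-s'|^{-1}$, so once more the bound is at least as good as $|s-s'|^{-1/2}$.

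The main obstacle is the weight $\lambda^{-1/2}$, which is singular at the origin and prevents a naive integration by parts or a direct application of van der Corput on all of $[0,\infty)$. I would handle this by a dyadic decomposition in $\lambda$: split $[0,\infty)$ into $\{\lambda \le |s-s'|\}$ (or $\{\lambda \le |\tau|^{-1/2}\}$, whichever is the natural scale) where one estimates trivially using $\int_0^{A}\lambda^{-1/2}\,d\lambda = 2A^{1/2}$, and the dyadic pieces $\lambda \sim 2^j$ above that scale, on each of which $\lambda^{-1/2}\mu(\lambda/N)$ behaves like a symbol of order $-1/2$ with $|\partial_\lambda^k(\lambda^{-1/2}\mu(\lambda/N))| \lesssim 2^{-j(1/2+k)}$; on these pieces van der Corput or repeated integration by parts applies cleanly, and summing the resulting geometric series in $j$ reproduces the claimed bound with implicit constant depending only on $\mu$ (through finitely many of its derivatives and the size of its support). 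Uniformity in $N$ is automatic because every estimate above is monotone in, or independent of, the cutoff scale $N$ — enlarging the support of $\mu(\cdot/N)$ only adds more dyadic pieces with smaller contributions. This is precisely the argument of \cite[Lemma 2.1]{DN}, so beyond organizing the two regimes and the dyadic weight decomposition there is nothing genuinely new to prove.
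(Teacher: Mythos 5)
Your skeleton---split by whether $|\tau|^{1/2}\lesssim|s-s'|$ or $|\tau|^{1/2}\gtrsim|s-s'|$, use $\alpha\ge 1/2$ to compare $|\rho|:=|\kappa(s',t(s'))-\kappa(s,t(s))|$ with $|s-s'|$, and tame the $\lambda^{-1/2}$ singularity by a frequency-threshold split---is the right one and matches the cited argument of Ding--Niu. But both regimes, as you have written them, contain errors that break the estimate.

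In the first regime you claim the phase derivative $\rho+2\tau\lambda$ is $\gtrsim|s-s'|$ on the support of $\mu(\cdot/N)$ because ``the $2\tau\lambda$ contribution is $O(|\tau|N)$ while $\mu$ is compactly supported.'' This is false: the support of $\mu(\cdot/N)$ has length $\asymp N$, so $|\tau|N$ is unbounded in $N$, and when $\tau\rho<0$ there is a genuine stationary point $\lambda_0=-\rho/(2\tau)$ which can sit anywhere in that support. Non-stationary phase does not apply across it. The correct bound in this regime is $|I|\lesssim|\rho|^{-1/2}$, obtained by splitting at $\lambda\asymp 1/|\rho|$: the low piece $\int_0^{1/|\rho|}\lambda^{-1/2}\,d\lambda\asymp|\rho|^{-1/2}$ is estimated trivially; the non-stationary part of the high piece by one integration by parts (first-derivative van der Corput, $|\rho+2\tau\lambda|\gtrsim|\rho|$); and a neighborhood $[\lambda_0/2,2\lambda_0]$ of the stationary point by second-derivative van der Corput, which contributes $|\tau|^{-1/2}\lambda_0^{-1/2}\asymp|\rho|^{-1/2}$. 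Then $|\rho|\gtrsim|s-s'|-C|\tau|^\alpha\gtrsim|s-s'|$, exactly as you set up, closes this regime with the correct exponent $-1/2$, not $-1$.

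In the second regime you write that van der Corput gives $|I|\lesssim|\tau|^{-1/2}$ and, combining with $|s-s'|\lesssim|\tau|^{1/2}$, conclude $|I|\lesssim|s-s'|^{-1}$, which is ``at least as good as $|s-s'|^{-1/2}$.'' This inequality goes the wrong way: for $|s-s'|<1$, which is the typical case on $(\delta_1,\delta_2)$, one has $|s-s'|^{-1}>|s-s'|^{-1/2}$, so the bound you state is strictly weaker than what the Lemma asserts. Moreover the raw second-derivative van der Corput bound $|\tau|^{-1/2}$ is not available here, because the total variation of $\lambda^{-1/2}\mu(\lambda/N)$ near $\lambda=0$ is infinite. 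What actually works is the very thing your dyadic paragraph points at: split at $A=|\tau|^{-1/2}$, estimate the low piece by $\int_0^A\lambda^{-1/2}\,d\lambda\asymp A^{1/2}=|\tau|^{-1/4}$ and the high piece by van der Corput times the BV-norm on $[A,\infty)$, which is $\lesssim|\tau|^{-1/2}A^{-1/2}=|\tau|^{-1/4}$, so $|I|\lesssim|\tau|^{-1/4}$; then the regime hypothesis $|\tau|\gtrsim|s-s'|^2$ gives $|\tau|^{-1/4}\lesssim|s-s'|^{-1/2}$. Your written chain of inequalities contradicts this and needs to be redone with these exponents.
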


\section{A counter-example for wide approach regions}
In this section we make use of several standard notions in Harmonic Analysis on Riemannian Symmetric spaces. For unexplained terminologies and more details, we refer to \cite{Helgason}.

Let $G=SL(2,\C)$ and $K$ be its maximal compact subgroup $SU(2)$. Here,
\begin{equation*}
A = \left\{a_t=
\begin{pmatrix}
e^t &0 \\
0 &e^{-t}
\end{pmatrix} : t \in \R
\right\}\:,
\end{equation*}
and $\Sigma_+$ consists of a single root $\alpha$ that occurs with multiplicity $2$.  We normalize $\alpha$ so that $\alpha(log\: a_t)=t$. Every $\lambda \in \C$ can be identified with an element in $\mathfrak{a}^*_{\C}$ by $\lambda = \lambda \alpha$. We see that in this identification (the half-sum of positive roots counted with multiplicity) $\rho=1,\: Q=2\rho=2$ and
\begin{equation*}
G/K \cong \mathbb{H}^3(-1)\:,
\end{equation*}
where $\mathbb{H}^3(-1)$ is the $3$-dimensional Real Hyperbolic space with constant sectional curvature $-1$. Then by \cite[p. 432, Theorem 5.7]{Helgason} we get the following information for $G/K$:
\begin{itemize}
\item For $\lambda \in (0,\infty)$, the Spherical function $\varphi_\lambda$ is given by,
\begin{equation*}
\varphi_\lambda(s) = \frac{\sin(\lambda s)}{\lambda \sinh(s)}\:.
\end{equation*}
\item Plancherel measure: $\lambda^2 \: d\lambda$, where $d\lambda$ is the Lebesgue measure on $(0,\infty)$.
\end{itemize}
We now present the proof of Theorem \ref{counter-example}.
\begin{proof}[Proof of Theorem \ref{counter-example}]
The compact geodesic annulus $\Sa$ is of the form,
\begin{equation*}
\Sa = \left\{x \in \mathbb{H}^3 \:|\: c_1 \le d(e,x)\le c_2\right\}\:,
\end{equation*}
for some positive constants $c_1<c_2$. Then depending on $\gamma, c_1$ and $c_2$, there exist positive constants $c_3,c_4,c_5$ such that $c_3 \in (0,1)$ and 
\begin{equation*}
0<c_4<c_1<c_2<c_5\:,
\end{equation*}
so that
\begin{equation*}
\displaystyle\bigcup_{x \in \Sa} \left\{y \in \mathbb{H}^3 \:|\: d(x,y) < \gamma(t)\:,\:0<t<c_3\right\} \subset  \left\{z \in \mathbb{H}^3 \:|\: c_4 \le d(e,z) \le c_5\right\}=:\Sa'\:. 
\end{equation*}
Let $\{x_j\}_{j=1}^\infty$ be a countable dense subset of $\Sa'$ and $\{t_j\}_{j=1}^\infty$ be any sequence of positive real numbers such that 
\begin{equation*}
c_3 > t_1 > t_2 > \dots > 0 \:,\: \text{ and } t_j \downarrow 0\:.
\end{equation*}
We claim that given any $x \in \Sa$, there exists a subsequence $\{(x_{j_l},t_{j_l})\}_{l=1}^\infty$ such that
\begin{equation} \label{counter-eq1}
(x_{j_l},t_{j_l}) \to (x,0)\text{ as } l \to \infty\:,\text{ while } d(x,x_{j_l}) < \gamma(t_{j_l})\:.
\end{equation}
Indeed, fixing $x \in \Sa$, we set $t_{j_1}:=t_1$ and then note that there exists $x_{j_1}(\ne x) \in \mathscr{B}(x,\gamma(t_{j_1}))$, the geodesic ball with center $x$ and radius $\gamma(t_{j_1})$. Then as $\gamma$ is continuous and strictly increasing, there exists $t_{j_1}' \in (0,t_{j_1})$ such that $\gamma(t_{j_1}')=d(x,x_{j_1})$. We now set $t_{j_2}:=\max\{t_j < t_{j_1}'\}$ and again note that there exists $x_{j_2}(\ne x) \in \mathscr{B}(x,\gamma(t_{j_2}))$. Then again as $\gamma$ is continuous and strictly increasing, there exists $t_{j_2}' \in (0,t_{j_2})$ such that $\gamma(t_{j_2}')=d(x,x_{j_2})$. Proceeding like this, we get a subsequence $\{(x_{j_l},t_{j_l})\}_{l=1}^\infty$ such that 
\begin{equation*}
d(x,x_{j_l}) < \gamma(t_{j_l}) \:,
\end{equation*}
and combining it with the fact that $t_{j_l} \downarrow 0$, it follows that 
\begin{equation*}
(x_{j_l},t_{j_l}) \to (x,0)\text{ as } l \to \infty\:.
\end{equation*}
This completes the proof of the claim (\ref{counter-eq1}).

We next let $\{r_j\}_{j=1}^\infty$ and $\{R_j\}_{j=1}^\infty$ be two sequences such that 
\begin{equation*}
3=r_1<R_1<r_2<R_2<\dots \:,
\end{equation*} 
and set
\begin{equation*}
\Omega_j:= \left\{\lambda \in (0,\infty)\:\mid\:r_j < \lambda < R_j\right\}\:.
\end{equation*}
Corresponding to the enumeration $\{x_j\}_{j=1}^\infty$, we consider radial $f \in L^2(\mathbb{H}^3)$, with the spherical Fourier transform
\begin{equation*}
\widehat{f}(\lambda)= \displaystyle\sum_{j=1}^\infty \chi_{\Omega_j}(\lambda)\:\lambda^{-1}\:{\left(\log \lambda\right)}^{-3/4}\:\varphi_\lambda(s_j)\:e^{-it_j(\lambda^2+\rho^2)}\:,
\end{equation*}
where $s_j=d(e,x_j)$. Then using the expression of $\varphi_\lambda$ and the fact that $s_j \ge c_4$, for all $j$, we get that
\begin{eqnarray*}
{\|f\|}^2_{H^{1/2}(\mathbb{H}^3)}= \int_0^\infty {\left(\lambda^2 + \rho^2\right)}^{1/2} {|\widehat{f}(\lambda)|}^2 \:\lambda^2\: d\lambda \lesssim \int_3^\infty \frac{d\lambda}{\lambda (\log \lambda)^{3/2}} < +\infty\:.
\end{eqnarray*}
Therefore $f \in H^{1/2}(\mathbb H^3)$. We now make our choice of  $\{r_j\}_{j=1}^\infty$ and $\{R_j\}_{j=1}^\infty$ more precise. We first choose $r_1=3,\:R_1=5$. Given $r_1,R_1,\dots, r_{j-1},R_{j-1}$, we then choose $r_j > R_{j-1}$ such that for $k<j$, one has for some $c_6>0$,
\begin{equation} \label{counter-eq2}
r^2_j \ge \frac{c_6 2^j}{|t_k-t_j|}\:,
\end{equation}
and
\begin{equation} \label{counter-eq3}
|t_k-t_j|r_j > s_j+s_k+1\:.
\end{equation}
We also set $R_j=r^M_j$, where $M>0$ is chosen large. For $m \in \N$, now let
\begin{equation*}
u_m(s,t):= \int_3^{R_m} \varphi_\lambda(s)\:e^{it(\lambda^2+\rho^2)}\:\widehat{f}(\lambda)\:\lambda^2\:d\lambda\:.
\end{equation*}
Now using the explicit expression of $\varphi_\lambda$, we have for all $(x,t) \in \Sa' \times (0,\infty)$,
\begin{eqnarray} \label{counter-eq4}
u_m(x,t)=u_m(s,t)&=& \displaystyle\sum_{j=1}^m \int_{\Omega_j} \lambda^{-1}\:{\left(\log \lambda\right)}^{-3/4}\:\varphi_\lambda(s)\:\varphi_\lambda(s_j)\:e^{i(t-t_j)(\lambda^2+\rho^2)}\:\lambda^2\:d\lambda \nonumber \\
&=& -\frac{1}{4} \displaystyle\sum_{j=1}^m \frac{e^{i(t-t_j)\rho^2}}{\sinh(s)\sinh(s_j)}\int_{\Omega_j} \lambda^{-1}\:{\left(\log \lambda\right)}^{-3/4}\:e^{i\lambda(s+s_j)}\:e^{i(t-t_j)\lambda^2}\:d\lambda \nonumber\\
&& +\frac{1}{4} \displaystyle\sum_{j=1}^m \frac{e^{i(t-t_j)\rho^2}}{\sinh(s)\sinh(s_j)}\int_{\Omega_j} \lambda^{-1}\:{\left(\log \lambda\right)}^{-3/4}\:e^{i\lambda(s-s_j)}\:e^{i(t-t_j)\lambda^2}\:d\lambda \nonumber\\
&& +\frac{1}{4} \displaystyle\sum_{j=1}^m \frac{e^{i(t-t_j)\rho^2}}{\sinh(s)\sinh(s_j)}\int_{\Omega_j} \lambda^{-1}\:{\left(\log \lambda\right)}^{-3/4}\:e^{-i\lambda(s-s_j)}\:e^{i(t-t_j)\lambda^2}\:d\lambda \nonumber\\
&& -\frac{1}{4} \displaystyle\sum_{j=1}^m \frac{e^{i(t-t_j)\rho^2}}{\sinh(s)\sinh(s_j)}\int_{\Omega_j} \lambda^{-1}\:{\left(\log \lambda\right)}^{-3/4}\:e^{-i\lambda(s+s_j)}\:e^{i(t-t_j)\lambda^2}\:d\lambda \nonumber\\
&=& u_{m,1}(s,t) + u_{m,2}(s,t) + u_{m,3}(s,t) + u_{m,4}(s,t)\:.
\end{eqnarray}
For each $k \in \N$ and $m>k$, our goal is to estimate $\left|u_m(x_k,t_k)\right|$. We first focus on
\begin{equation*}
u_{m,1}(s_k,t_k) = -\frac{1}{4} \displaystyle\sum_{j=1}^m \frac{e^{i(t_k-t_j)\rho^2}}{\sinh(s_k)\sinh(s_j)} A_j(s_k,t_k)\:.
\end{equation*}
As $s_j,s_k \ge c_4$, for all $j,k$, we first have
\begin{eqnarray*}
\left|\displaystyle\sum_{j=1}^{k-1} \frac{e^{i(t_k-t_j)\rho^2}}{\sinh(s_k)\sinh(s_j)} A_j(s_k,t_k)\right| & \lesssim & \displaystyle\sum_{j=1}^{k-1} \int_{\Omega_j} \lambda^{-1}\:{\left(\log \lambda\right)}^{-3/4}\:d\lambda \\
&\le & \int_3^{R_{k-1}} \lambda^{-1}\:{\left(\log \lambda\right)}^{-3/4}\:d\lambda \\
&\lesssim & {\left(\log r_k\right)}^{1/4} \:.
\end{eqnarray*}
Next, an integration by parts and an application of the Fundamental theorem of calculus yield
\begin{eqnarray*}
\left|A_k(s_k,t_k)\right| &=& \left|\int_{\Omega_k}\lambda^{-1}\:{\left(\log \lambda\right)}^{-3/4}\:e^{2i\lambda s_k}\:d\lambda\right| \\
& \lesssim & r^{-1}_k\:{\left(\log r_k\right)}^{-3/4} + \int_{\Omega_k} \left|\frac{d}{d\lambda}\left(\lambda^{-1}\:{\left(\log \lambda\right)}^{-3/4}\right)\right|\:d\lambda \\
&=& 2 r^{-1}_k\:{\left(\log r_k\right)}^{-3/4} - R^{-1}_k\:{\left(\log R_k\right)}^{-3/4} \\
& \lesssim & r^{-1}_k\:{\left(\log r_k\right)}^{-3/4}\:.
\end{eqnarray*} 
For $j>k$, we note that
\begin{equation*}
A_j(s_k,t_k)= \int_{\Omega_j} \lambda^{-1}\:{\left(\log \lambda\right)}^{-3/4}\:e^{iF_1(\lambda)}\:d\lambda\:,
\end{equation*}
where, 
\begin{equation*}
F_1(\lambda)= (s_k+s_j)\lambda + (t_k-t_j)\lambda^2\:.
\end{equation*}
Thus
\begin{eqnarray} \label{counter-eq5}
&&F'_1(\lambda)= s_k+s_j + 2(t_k-t_j)\lambda\:,\nonumber\\
&&F''_1(\lambda)= 2(t_k-t_j)\:.
\end{eqnarray}
Next using (\ref{counter-eq3}), for $\lambda \in \Omega_j$, it follows that
\begin{equation} \label{counter-eq6}
\left|F'_1(\lambda)\right| \ge  2|t_k-t_j|\lambda - (s_k+s_j) > |t_k-t_j|\lambda > |t_k-t_j|r_j\:.  
\end{equation}
Now an integration by parts yields
\begin{eqnarray*}
&&\int_{\Omega_j} \lambda^{-1}\:{\left(\log \lambda\right)}^{-3/4}\:e^{iF_1(\lambda)}\:d\lambda \\ &=& \int_{r_j}^{R_j} \frac{1}{\lambda\:{\left(\log \lambda\right)}^{3/4} iF'_1(\lambda)}iF'_1(\lambda)\:e^{iF_1(\lambda)}\:d\lambda \\
&=& \left[\frac{1}{\lambda\:{\left(\log \lambda\right)}^{3/4} iF'_1(\lambda)}\:e^{iF_1(\lambda)}\right]^{R_j}_{r_j} - \int_{r_j}^{R_j} \frac{d}{d\lambda}\left(\frac{1}{\lambda\:{\left(\log \lambda\right)}^{3/4} iF'_1(\lambda)}\right) \:e^{iF_1(\lambda)}\:d\lambda \\
&=& I-J\:.
\end{eqnarray*}
Now invoking (\ref{counter-eq2}) and (\ref{counter-eq6}), it follows that
\begin{equation*}
|I| \le \frac{2}{|t_k-t_j|r^2_j} \le \left(\frac{2}{c_6}\right)2^{-j}\:.
\end{equation*}
Using (\ref{counter-eq5}) and (\ref{counter-eq6}), we also have
\begin{eqnarray*}
\left|\frac{d}{d\lambda}\left(\frac{1}{\lambda\:{\left(\log \lambda\right)}^{3/4} iF'_1(\lambda)}\right)\right| \le \frac{2}{\lambda^2 \left|F'_1(\lambda)\right|} + \frac{\left|F''_1(\lambda)\right|}{\lambda {\left|F'_1(\lambda)\right|}^2} \le \frac{4}{|t_k-t_j|\lambda^3}\:.
\end{eqnarray*}
Hence,
\begin{equation*}
|J| \le \frac{2}{|t_k-t_j|r^2_j} \le \left(\frac{2}{c_6}\right)2^{-j}\:.
\end{equation*}
Thus setting, $c_7=4/c_6$, we note that for $j>k$,
\begin{equation*} 
\left|A_j(s_k,t_k)\right| \le c_7 2^{-j}\:.
\end{equation*}
Therefore for $m>k$,
\begin{equation*} 
\left|u_{m,1}(s_k,t_k)\right| \lesssim {\left(\log r_k\right)}^{1/4} + r^{-1}_k\:{\left(\log r_k\right)}^{-3/4} + \displaystyle\sum_{j=k+1}^m 2^{-j} \lesssim {\left(\log r_k\right)}^{1/4}\:.
\end{equation*}
Similarly, we get that
\begin{equation*} 
\left|u_{m,4}(s_k,t_k)\right| \lesssim {\left(\log r_k\right)}^{1/4}\:.
\end{equation*}
We now consider
\begin{equation*}
u_{m,2}(s_k,t_k)=\frac{1}{4} \displaystyle\sum_{j=1}^m \frac{e^{i(t_k-t_j)\rho^2}}{\sinh(s_k)\sinh(s_j)}B_j(s_k,t_k)\:.
\end{equation*}
Then again proceeding as in the case of $u_{m,1}$, we get that
\begin{eqnarray*}
\left|\displaystyle\sum_{j=1}^{k-1} \frac{e^{i(t_k-t_j)\rho^2}}{\sinh(s_k)\sinh(s_j)} B_j(s_k,t_k)\right| 
&\lesssim & {\left(\log r_k\right)}^{1/4} \:.
\end{eqnarray*}
Next, we see that for $M>0$ sufficiently large,
\begin{eqnarray*}
B_k(s_k,t_k)&=& \int_{r_k}^{R_k} \lambda^{-1}\:{\left(\log \lambda\right)}^{-3/4}\:d\lambda \\
&=& 4\left({\left(\log R_k\right)}^{1/4}- {\left(\log r_k\right)}^{1/4}\right) \\
&=& 4\left(1-\frac{1}{M^{1/4}}\right) {\left(\log R_k\right)}^{1/4} \\
&\gtrsim& {\left(\log R_k\right)}^{1/4}\:.
\end{eqnarray*}
For $j>k$, we note that
\begin{equation*}
B_j(s_k,t_k)= \int_{\Omega_j} \lambda^{-1}\:{\left(\log \lambda\right)}^{-3/4}\:e^{iF_2(\lambda)}\:d\lambda\:,
\end{equation*}
where, 
\begin{equation*}
F_2(\lambda)= (s_k-s_j)\lambda + (t_k-t_j)\lambda^2\:.
\end{equation*}
Thus
\begin{eqnarray*} 
&&F'_2(\lambda)= (s_k-s_j) + 2(t_k-t_j)\lambda\:,\nonumber\\
&&F''_2(\lambda)= 2(t_k-t_j)\:.
\end{eqnarray*}
So for $j>k$, either $s_k=s_j$ or $s_k \ne s_j$. But in either case, 
\begin{equation*} 
\left|F'_2(\lambda)\right| > |t_k-t_j|\lambda > |t_k-t_j|r_j\:.  
\end{equation*}
The above is trivial if $s_k=s_j$. If $s_k \ne s_j$ proceeding as before using (\ref{counter-eq3}), we get the above inequality. Then again proceeding as before using the above inequality and (\ref{counter-eq2}), we get that
\begin{equation*} 
\left|B_j(s_k,t_k)\right| \le c_7 2^{-j}\:.
\end{equation*}
Similarly, writing 
\begin{equation*}
u_{m,3}(s_k,t_k)=\frac{1}{4} \displaystyle\sum_{j=1}^m \frac{e^{i(t_k-t_j)\rho^2}}{\sinh(s_k)\sinh(s_j)}D_j(s_k,t_k)\:,
\end{equation*}
we would get that
\begin{eqnarray*}
&&\left|\displaystyle\sum_{j=1}^{k-1} \frac{e^{i(t_k-t_j)\rho^2}}{\sinh(s_k)\sinh(s_j)} D_j(s_k,t_k)\right| 
\lesssim  {\left(\log r_k\right)}^{1/4} \:,\\
&& D_k(s_k,t_k) \gtrsim {\left(\log R_k\right)}^{1/4} \:,\\
&& \left|D_j(s_k,t_k)\right| \le c_7 2^{-j}\:,\text{ for } j>k\:.
\end{eqnarray*}
Thus for $m>k$ and $M>0$ sufficiently large, using the decomposition (\ref{counter-eq4}), the fact that all $s_j,s_k \in [c_4,c_5]$ and the above estimates, we get that
\begin{eqnarray} \label{counter-eq7}
\left|u_m(x_k,t_k)\right| &\ge & \left|u_{m,2}(s_k,t_k)+ u_{m,3}(s_k,t_k)\right| - \left|u_{m,1}(s_k,t_k)\right| - \left|u_{m,4}(s_k,t_k)\right| \nonumber\\
& \ge & \frac{1}{4 \sinh^2(s_k)}\left(B_k(s_k,t_k)+D_k(s_k,t_k)\right) - \frac{1}{4}\left|\displaystyle\sum_{j=1}^{k-1} \frac{e^{i(t_k-t_j)\rho^2}}{\sinh(s_k)\sinh(s_j)} B_j(s_k,t_k)\right| \nonumber\\
&& - \frac{1}{4}\left|\displaystyle\sum_{j=1}^{k-1} \frac{e^{i(t_k-t_j)\rho^2}}{\sinh(s_k)\sinh(s_j)} D_j(s_k,t_k)\right|  - \frac{1}{4}\left|\displaystyle\sum_{j=k+1}^{m} \frac{e^{i(t_k-t_j)\rho^2}}{\sinh(s_k)\sinh(s_j)} B_j(s_k,t_k)\right| \nonumber\\
&& - \frac{1}{4}\left|\displaystyle\sum_{j=k+1}^{m} \frac{e^{i(t_k-t_j)\rho^2}}{\sinh(s_k)\sinh(s_j)} D_j(s_k,t_k)\right| - \left|u_{m,1}(s_k,t_k)\right| - \left|u_{m,4}(s_k,t_k)\right| \nonumber\\
&\gtrsim& {\left(\log R_k\right)}^{1/4} - {\left(\log r_k\right)}^{1/4} -1 \nonumber\\
&\gtrsim& {\left(\log R_k\right)}^{1/4}\:.
\end{eqnarray}
We now show that for all $m \in \N$, $u_m$ is continuous on $\Sa' \times (0,\infty)$. For $(s,t) \in [c_4,c_5] \times (0,\infty)$, let $(\varepsilon,h) \in \R^2$ be such that $s + \varepsilon \ge c_4/2$. We write,
\begin{equation*}
\left|u_m(s+\varepsilon,t+h)-u_m(s,t)\right| \le \left|u_m(s+\varepsilon,t+h)-u_m(s,t+h)\right| + \left|u_m(s,t+h)-u_m(s,t)\right| \:.
\end{equation*}
Then as $[3,R_m]$ is compact, by the Dominated Convergence Theorem, we get that
\begin{equation*}
\left|u_m(s+\varepsilon,t+h)-u_m(s,t+h)\right| \le \int_3^{R_m} \left|\varphi_\lambda(s+\varepsilon)- \varphi_\lambda(s)\right| \left|\widehat{f}(\lambda)\right|\:\lambda^2\:d\lambda \to 0 \text{ as } |\varepsilon| \to 0\:,
\end{equation*}
and also
\begin{equation*}
\left|u_m(s,t+h)-u_m(s,t)\right| \le \int_3^{R_m} \left|e^{ih(\lambda^2+\rho^2)}- 1\right| \left|\widehat{f}(\lambda)\right|\:\lambda^2\:d\lambda \to 0 \text{ as } |h| \to 0\:.
\end{equation*}
Now if we can show that $u_m \to u$ locally uniformly on $\Sa' \times (0,\infty)$, then it follows that $u$ is also continuous on $\Sa' \times (0,\infty)$. To see the local uniform convergence, let $L$ be a compact subset of $\Sa' \times (0,\infty)$. We now claim that there exists $j_0 \in \N$ such that for all $j>j_0$ and all $(x,t) \in L$, \begin{equation} \label{counter-eq8}
r^2_j \ge \frac{c_6 2^j}{|t-t_j|}\:,
\end{equation}
and
\begin{equation} \label{counter-eq9}
|t-t_j|r_j > s_j+s+1\:,
\end{equation}
which basically means that $(s_k,t_k)$ in (\ref{counter-eq2}) and (\ref{counter-eq3}) can be replaced by $(s,t)$ corresponding to $(x,t) \in L$, when $j$ is large enough. To see the above claims, we note that by compactness of $L$, there exists $j' \in \N$ such that for all $j \ge j'$,
\begin{equation*}
t_j < \inf \:\{t| (x,t) \in L\}\:.
\end{equation*}
Then as the sequence $\{t_j\}_{j=1}^\infty$ is monotonically decreasing, combining it with (\ref{counter-eq2}), we get that for all $j>j'$ and all $(x,t) \in L$,
\begin{equation*}
|t-t_j| > |t_{j'}-t_j| \ge \frac{c_6 2^j}{r^2_j}\:.
\end{equation*}
and also 
\begin{equation*}
\frac{|t-t_j|}{s_j+s+1} > \frac{t-t_{j'}}{2c_5+1}\:.
\end{equation*}
Now as $r_j \uparrow \infty$, there exists $j'' \in \N$ such that for all $j > j''$,
\begin{equation*}
\frac{t-t_{j'}}{2c_5+1} > \frac{1}{r_j}\:.
\end{equation*}
Thus setting $j_0:=\max\{j',j''\}$, the claims (\ref{counter-eq8}) and (\ref{counter-eq9}) are established. Then using (\ref{counter-eq8}) and (\ref{counter-eq9}), proceeding as before, we get that there exists $c_8>0$, such that for all $j>j_0$ and all $(x,t) \in L$,
\begin{equation*}
\left| \int_{\Omega_j} \varphi_\lambda(s)\:e^{it(\lambda^2+\rho^2)}\:\widehat{f}(\lambda)\:\lambda^2\:d\lambda\right| \le c_8 2^{-j}\:.
\end{equation*}
Thus for all $m>j_0$ and all $(x,t) \in L$,
\begin{eqnarray*}
\left|u_m(x,t)-u(x,t)\right| &=& \left| \int_{R_m}^\infty \varphi_\lambda(s)\:e^{it(\lambda^2+\rho^2)}\:\widehat{f}(\lambda)\:\lambda^2\:d\lambda\right| \\
&\le & \displaystyle\sum_{j=m+1}^\infty
\left| \int_{\Omega_j} \varphi_\lambda(s)\:e^{it(\lambda^2+\rho^2)}\:\widehat{f}(\lambda)\:\lambda^2\:d\lambda\right| \\
& \le & c_8 \displaystyle\sum_{j=m+1}^\infty 2^{-j} = c_8 2^{-m}\:.
\end{eqnarray*}
This gives the desired local uniform convergence and consequently, the continuity of $u$ on $\Sa' \times (0,\infty)$. Finally for any $(x_k,t_k)$ and sufficiently large $m_k$, we have by the above inequality and (\ref{counter-eq8}),
\begin{equation*}
\left|u(x_k,t_k)\right| \ge \left|u_{m_k}(x_k,t_k)\right| -c_8 \:\gtrsim\: {\left(\log R_k\right)}^{1/4}\:. 
\end{equation*} 
Now choose and fix $x \in \Sa$. We then consider the subsequence $\{(x_{j_l},t_{j_l})\}_{l=1}^\infty$ satisfying the property (\ref{counter-eq1}) and obtain
\begin{equation*}
\displaystyle\limsup_{\substack{(y,t) \to (x,0)\\ d(x,y)<\gamma(t),\:t>0}} |u(y,t)| \ge \displaystyle\lim_{l \to \infty} \left|u(x_{j_l},t_{j_l})\right| \gtrsim\: \displaystyle\lim_{l \to \infty} {\left(\log R_{j_l}\right)}^{1/4}\:=\: +\infty\:.
\end{equation*}
As $x \in \Sa$ was arbitrary, this completes the proof of Theorem \ref{counter-example}.
\end{proof}

\section{Results on Damek-Ricci spaces}
In this section, we aim to prove Theorem \ref{thm1}. But first we see the following lemma.
\begin{lemma} \label{curve_growth_lemma}
Under the hypothesis of Theorem \ref{thm1}, we have for all $s \in (r_1,r_2)$ and $t \in [-T,T]$,
\begin{equation*}
\frac{C_2}{2}s \:< \: d(e,\gamma_s(t)) \: < \frac{3C_3}{2}s\:.
\end{equation*}
\end{lemma}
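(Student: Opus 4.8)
The plan is to show that over the short time window $[-T,T]$ the distance $d(e,\gamma_s(t))$ cannot stray far from its initial value $d(e,\gamma_s(0))=s$, and then to read off both inequalities from this. I would begin with two preliminary observations. First, $\gamma_s(0)$ lies on the geodesic sphere of radius $s$ about $e$, so $d(e,\gamma_s(0))=s$ for every $s\ge 0$. Second, evaluating $(\mathscr{H}_2)$ at $t=0$ and using this gives $C_2|s-s'|\le |s-s'|\le C_3|s-s'|$ for all distinct $s,s'\in[0,r_2)$, hence $C_2\le 1\le C_3$. This mild normalization of the Lipschitz constants is exactly what lets the factors $C_2/2$ and $3C_3/2$ in the statement absorb the elementary estimates below.

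Next I would apply $(\mathscr{H}_1)$ with $t'=0$, which gives $|d(e,\gamma_s(t))-s|\le C_1|t|^\alpha\le C_1T^\alpha$ for all $s\in[0,r_2)$ and $t\in[-T,T]$ (since $\alpha>0$, the map $x\mapsto x^\alpha$ is increasing on $[0,\infty)$, so $|t|\le T$ forces $|t|^\alpha\le T^\alpha$). The time localization (\ref{time_localization}) says precisely $T^\alpha<C_2 r_1/(2C_1)$, so $C_1T^\alpha<C_2 r_1/2$, and for $s\in(r_1,r_2)$ this is strictly less than $C_2 s/2\le s/2$. Hence $|d(e,\gamma_s(t))-s|<C_2 s/2$ for all $s\in(r_1,r_2)$ and all $t\in[-T,T]$.

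Unwinding this two-sided bound finishes the proof: on one side, $d(e,\gamma_s(t))>s-C_2 s/2=(1-C_2/2)s\ge (C_2/2)s$ because $C_2\le 1$; on the other, $d(e,\gamma_s(t))<s+C_2 s/2=(1+C_2/2)s\le (3/2)s\le (3C_3/2)s$ because $C_2\le 1$ and $C_3\ge 1$. There is no real obstacle here; the lemma is essentially a two-line computation, and the only point worth a moment's care is noticing that the consistency of $(\mathscr{H}_2)$ at $t=0$ already forces $C_2\le 1\le C_3$, since otherwise the claimed constants would not be large enough to absorb the elementary estimates.
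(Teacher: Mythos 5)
Your proof is correct but follows a genuinely different route from the paper's. The paper compares $\gamma_s(t)$ to the reference curve $\gamma_0$: it applies $(\mathscr{H}_2)$ with $s'=0$ to get $C_2 s \le |d(e,\gamma_s(t))-d(e,\gamma_0(t))| \le C_3 s$, then applies $(\mathscr{H}_1)$ to $\gamma_0$ (using $d(e,\gamma_0(0))=0$) to get $d(e,\gamma_0(t))\le C_1 T^\alpha$, and combines these via the triangle inequality to obtain $C_2 s - C_1 T^\alpha \le d(e,\gamma_s(t)) \le C_3 s + C_1 T^\alpha$; the final step uses only $C_2 \le C_3$. You instead apply $(\mathscr{H}_1)$ directly to $\gamma_s$, exploiting $d(e,\gamma_s(0))=s$ to obtain the centered two-sided bound $|d(e,\gamma_s(t))-s|\le C_1 T^\alpha$, and you extract the sharper normalization $C_2 \le 1 \le C_3$ by evaluating $(\mathscr{H}_2)$ at $t=0$. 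Both approaches rest on the parametrization convention $\gamma_x(0)=x$; the paper only needs it at $s=0$ (via $\gamma_0(0)=e$), whereas you use it at general $s$, which is what buys you the cleaner single-reference-point computation and the constraint $C_2\le 1\le C_3$. Your argument is shorter and avoids introducing $\gamma_0$ as an auxiliary curve, at the cost of requiring this slightly stronger use of the normalization. The elementary arithmetic in your final step checks out: $1-C_2/2\ge C_2/2$ since $C_2\le 1$, and $1+C_2/2\le 3/2\le 3C_3/2$ since $C_2\le 1\le C_3$.
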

\begin{proof}
We first note by $(\mathscr{H}_2)$ that
\begin{equation} \label{curve_growth_lemma_eq1}
C_2s\: \le \: \left|d(e,\gamma_s(t)) - d(e,\gamma_0(t))\right|\: \le \: C_3s\:.
\end{equation}
Next by $(\mathscr{H}_1)$, we have
\begin{equation} \label{curve_growth_lemma_eq2}
d(e,\gamma_0(t)) \:=\:\left|d(e,\gamma_0(t)) - d(e,\gamma_0(0))\right|\: \le \: C_1{|t|}^\alpha \:\le\: C_1T^\alpha\:.
\end{equation}
Therefore, combining (\ref{curve_growth_lemma_eq1}) and (\ref{curve_growth_lemma_eq2}), we get that
\begin{equation*}
C_2s-C_1T^\alpha \:\le\: d(e,\gamma_s(t)) \:\le\: C_3s+C_1T^\alpha\:.
\end{equation*}
Now as $0<T < {\left(\frac{C_2r_1}{2C_1}\right)}^{1/\alpha}$, we see that
\begin{equation*}
C_2s-C_1T^\alpha >  C_2 \left(s-\frac{r_1}{2}\right) > \frac{C_2}{2}s\:. 
\end{equation*}
Similarly, 
\begin{equation*}
C_3s+C_1T^\alpha < C_3s+ C_2 \left(\frac{r_1}{2}\right) \le C_3 \left(s+\frac{r_1}{2}\right) < \frac{3C_3}{2}s\:.
\end{equation*}
This completes the proof.
\end{proof}

\begin{proof}[Proof of Theorem \ref{thm1}]
In order to prove the theorem, it suffices to prove the following estimate in terms of the linearized maximal function,
\begin{equation} \label{thm1_pf_eq1}
{\left(\bigintssss_{r_1}^{r_2} {\left|T_\gamma f(s)\right|}^2 A(s)\:ds\right)}^{1/2} \lesssim  {\|f\|}_{\dot{H}^{1/4}(S)}\:,
\end{equation}
where,
\begin{equation} \label{linearized_max_fn}
T_\gamma f(s) := \int_{0}^\infty \varphi_\lambda(d(e,\gamma_s(t(s))))\:e^{it(s)\left(\lambda^2 + \frac{Q^2}{4}\right)}\:\widehat{f}(\lambda)\: {|{\bf c}(\lambda)|}^{-2}\: d\lambda \:,
\end{equation} 
and $t(\cdot): (r_1,r_2) \to [-T,T]$ is a measurable function. Now by Lemma \ref{curve_growth_lemma}, we have for all $s \in (r_1,r_2)$, 
\begin{equation*}
\frac{C_2}{2}r_1 < \frac{C_2}{2}s < d(e,\gamma_s(t(s))) < \frac{3C_3}{2}s < \frac{3C_3}{2}r_2   \:.
\end{equation*}
Then by invoking the  series expansion (\ref{anker_series_expansion}) of $\varphi_\lambda$, for $s \in (r_1, r_2)$ and $\lambda > 0$, we get that
\begin{eqnarray} \label{thm1_pf_eq2}
\varphi_\lambda (d(e,\gamma_s(t(s)))) &=& 2^{-m_\z/2} {A(d(e,\gamma_s(t(s))))}^{-1/2} \left\{{\bf c}(\lambda)  e^{i \lambda d(e,\gamma_s(t(s)))} + {\bf c}(-\lambda) e^{-i\lambda  d(e,\gamma_s(t(s)))}\right\} \nonumber\\
&& + \mathscr{E}(\lambda,d(e,\gamma_s(t(s))))\:,
\end{eqnarray}
where
\begin{eqnarray*}
\mathscr{E}(\lambda,d(e,\gamma_s(t(s))))&=& 2^{-m_\z/2} {A(d(e,\gamma_s(t(s))))}^{-1/2} \\
&& \times  \left\{{\bf c}(\lambda)  \displaystyle\sum_{\mu=1}^\infty \Gamma_\mu(\lambda) e^{(i \lambda-\mu) d(e,\gamma_s(t(s)))} + {\bf c}(-\lambda) \displaystyle\sum_{\mu=1}^\infty \Gamma_\mu(-\lambda) e^{-(i\lambda + \mu) d(e,\gamma_s(t(s)))}\right\} \:,
\end{eqnarray*}
and thus again using Lemma \ref{curve_growth_lemma}, the local growth asymptotics of the density function (\ref{density_function}) and the estimate (\ref{coefficient_estimate}) on the coefficients $\Gamma_\mu$, it follows that
\begin{equation} \label{thm1_pf_eq3}
\left|\mathscr{E}(\lambda,d(e,\gamma_s(t(s))))\right| \lesssim {A(s)}^{-1/2} \left|{\bf c}(\lambda)\right| {(1+\lambda)}^{-1} \:.
\end{equation}
Hence for $ s \in (r_1,r_2)$ and $\lambda >0$, in accordance to (\ref{thm1_pf_eq2}), we decompose $T_\gamma$ as,
\begin{eqnarray*}
&&T_\gamma f(s) \\
&=& 2^{-m_\z/2} {A(d(e,\gamma_s(t(s))))}^{-1/2} \bigintssss_0^\infty  {\bf c}(\lambda) \: e^{i\left\{\lambda d(e,\gamma_s(t(s))) + t(s)\left(\lambda^2 + \frac{Q^2}{4}\right)\right\}} \:\widehat{f}(\lambda)\: {|{\bf c}(\lambda)|}^{-2}\: d\lambda \\
&&+ 2^{-m_\z/2} {A(d(e,\gamma_s(t(s))))}^{-1/2} \bigintssss_0^\infty  {\bf c}(-\lambda) \: e^{i\left\{-\lambda d(e,\gamma_s(t(s))) + t(s)\left(\lambda^2 + \frac{Q^2}{4}\right)\right\}} \:\widehat{f}(\lambda)\: {|{\bf c}(\lambda)|}^{-2}\: d\lambda \\
&&+ \bigintssss_0^\infty \mathscr{E}(\lambda,d(e,\gamma_s(t(s)))) \:e^{it(s)\left(\lambda^2 + \frac{Q^2}{4}\right)}\:\widehat{f}(\lambda)\: {|{\bf c}(\lambda)|}^{-2}\: d\lambda \\
&=& T_{\gamma,1}f(s)\:+\: T_{\gamma,2}f(s)\: + \:T_{\gamma,3}f(s)\:.
\end{eqnarray*}
The arguments for $T_{\gamma,1}$ and $T_{\gamma,2}$ are similar and hence we work out the details only for $T_{\gamma,1}$. We aim to show that
\begin{equation} \label{thm1_pf_eq4}
{\left(\bigintssss_{r_1}^{r_2} {\left|T_{\gamma,1} f(s)\right|}^2 A(s)\:ds\right)}^{1/2} \lesssim  {\left(\int_0^\infty \lambda^{1/2}\: {|\widehat{f}(\lambda)|}^2 {|{\bf c}(\lambda)|}^{-2} d\lambda\right)}^{1/2}\:.
\end{equation}
Now
\begin{eqnarray*}
&& T_{\gamma,1} f(s)\: A(s)^{1/2} \\
&=& 2^{-m_\z/2} {A(d(e,\gamma_s(t(s))))}^{-1/2} A(s)^{1/2} \bigintssss_0^\infty  {\bf c}(\lambda)  e^{i\left\{\lambda d(e,\gamma_s(t(s))) + t(s)\left(\lambda^2 + \frac{Q^2}{4}\right)\right\}} \widehat{f}(\lambda) {|{\bf c}(\lambda)|}^{-2} d\lambda \\
&=& 2^{-m_\z/2} {A(d(e,\gamma_s(t(s))))}^{-1/2} A(s)^{1/2} \bigintssss_0^\infty  {\bf c}(\lambda)  e^{i\left\{\lambda d(e,\gamma_s(t(s))) + t(s)\left(\lambda^2 + \frac{Q^2}{4}\right)\right\}} g(\lambda)\lambda^{-1/4}{|{\bf c}(\lambda)|}^{-1} d\lambda,
\end{eqnarray*}
where
\begin{equation*}
g(\lambda)= \widehat{f}(\lambda)\:\lambda^{1/4}\:{|{\bf c}(\lambda)|}^{-1}\:.
\end{equation*}
Then setting,
\begin{eqnarray*}
P_\gamma g(s):={A(d(e,\gamma_s(t(s))))}^{-1/2} {A(s)}^{1/2} \bigintssss_{0}^\infty  {\bf c}(\lambda) e^{i\left\{\lambda d(e,\gamma_s(t(s)))) + t(s)\left(\lambda^2 + \frac{Q^2}{4}\right)\right\}}g(\lambda) \lambda^{-1/4} {|{\bf c}(\lambda)|}^{-1} d\lambda \:,
\end{eqnarray*}
that is,
\begin{equation*}
2^{m_\z/2}\: T_{\gamma,1}f(s)\: A(s)^{1/2} = P_\gamma g(s)\:,
\end{equation*}
we note that proving (\ref{thm1_pf_eq4}) is equivalent to proving
\begin{equation} \label{thm1_pf_eq5}
{\left(\int_{r_1}^{r_2} {\left|P_\gamma g(s)\right|}^2\:ds\right)}^{1/2} \lesssim {\left(\int_0^\infty  {|g(\lambda)|}^2 \:d\lambda\right)}^{1/2}\:.
\end{equation}
Taking $\rho \in C^\infty_c[0,\infty)$ real-valued such that $\rho(\lambda)=1$ if $\lambda \le 1$ and $\rho(\lambda)=0$ if $\lambda \ge 2$, for $N>2$ and $s \in (r_1,r_2)$, we set
\begin{eqnarray*}
P_{\gamma,N} g(s)&:=& {A(d(e,\gamma_s(t(s))))}^{-1/2} {A(s)}^{1/2} \\
&&\times \bigintssss_{0}^\infty  {\bf c}(\lambda) e^{i\left\{\lambda d(e,\gamma_s(t(s)))) + t(s)\left(\lambda^2 + \frac{Q^2}{4}\right)\right\}}\rho\left(\frac{\lambda}{N}\right)g(\lambda) \lambda^{-1/4} {|{\bf c}(\lambda)|}^{-1} d\lambda \:.
\end{eqnarray*}
Now for $\eta \in C^\infty_c(r_1,r_2)$ and $\lambda >0$, setting
\begin{eqnarray*}
P^*_{\gamma,N} \eta(\lambda)&:=& \overline{{\bf c}(\lambda)} \:\rho\left(\frac{\lambda}{N}\right) \lambda^{-1/4}\:{|{\bf c}(\lambda)|}^{-1} \\
&&\times \bigintssss_{r_1}^{r_2}  {A(d(e,\gamma_s(t(s))))}^{-1/2} {A(s)}^{1/2}  e^{-i\left\{\lambda d(e,\gamma_s(t(s)))) + t(s)\left(\lambda^2 + \frac{Q^2}{4}\right)\right\}} \eta(s) ds\:,
\end{eqnarray*}
it is easy to see that
\begin{equation*}
\int_{r_1}^{r_2} P_{\gamma,N}\psi(s)\: \overline{\eta(s)}\: ds = \int_0^\infty \psi(\lambda)\: \overline{P^*_{\gamma,N} \eta(\lambda)}\: d\lambda\:,
\end{equation*}
holds for all $\eta \in C^\infty_c(r_1,r_2)$ and $\psi \in L^2(0,\infty)$ having suitable decay at infinity. Thus it suffices to prove that 
\begin{equation} \label{thm1_pf_eq6}
{\left(\int_0^\infty {\left|P^*_{\gamma,N} h(\lambda)\right|}^2 d\lambda\right)}^{1/2} \lesssim {\left(\int_{r_1}^{r_2} {|h(s)|}^2\: ds\right)}^{1/2}\:,
\end{equation}
for all $h \in C^\infty_c(r_1,r_2)$, with the implicit constant independent of $N$, as then letting $N \to \infty$, we can obtain the estimate (\ref{thm1_pf_eq5}). Now by Fubini's theorem,
\begin{eqnarray*}
&& \int_0^\infty {\left|P^*_{\gamma,N} h(\lambda)\right|}^2 d\lambda \\
&=& \int_{r_1}^{r_2}\int_{r_1}^{r_2} I_N(s,s')
{A(d(e,\gamma_s(t(s))))}^{-\frac{1}{2}} {A(s)}^{\frac{1}{2}} {A(d(e,\gamma_{s'}(t(s'))))}^{-\frac{1}{2}} {A(s')}^{\frac{1}{2}} h(s) \overline{h(s')} ds\: ds',
\end{eqnarray*}
where 
\begin{equation*}
I_N(s,s') =  \int_{0}^\infty e^{i\left\{\lambda\left(d(e,\gamma_{s'}(t(s')))-d(e,\gamma_{s}(t(s)))\right)+(t(s')-t(s))\left(\lambda^2 + \frac{Q^2}{4}\right)\right\}} \lambda^{-1/2} \rho\left(\frac{\lambda}{N}\right)^2\: d\lambda \:.
\end{equation*}
Then by Lemmata \ref{oscillatory_integral_lemma}, \ref{curve_growth_lemma} and the local growth asymptotics of the density function, we get that
\begin{equation*}
\int_0^\infty {\left|P^*_{\gamma,N} h(\lambda)\right|}^2 d\lambda \lesssim  \int_{r_1}^{r_2}\int_{r_1}^{r_2} \frac{1}{{|s-s'|}^{1/2}}\: |h(s)|\: |h(s')|\: ds\: ds',
\end{equation*}
with the implicit constant independent of $N$. As $h \in C^\infty_c(r_1,r_2)$, we can think of it as an even $C^\infty_c$ function supported in $(-r_2,-r_1) \sqcup (r_1,r_2)$. We can therefore write the last integral as an one dimensional Riesz potential and apply (\ref{riesz_identity}) to get that for some positive number $c$, 
\begin{eqnarray*}
\int_{r_1}^{r_2}\int_{r_1}^{r_2} \frac{1}{{|s-s'|}^{1/2}}\: |h(s)|\: |h(s')|\: ds\: ds' &=& \int_{0}^{\infty}\int_0^\infty \frac{1}{{|s-s'|}^{1/2}}\: |h(s)|\: |h(s')|\: ds\: ds' \\
&=& c\int_{0}^\infty I_{1/2} (|h|)(s)\:|h(s)|\:ds \\
&=& c \int_{\R} {|\xi|}^{-\frac{1}{2}}\: {\left|\tilde{h}(\xi)\right|}^2\: d\xi \:.
\end{eqnarray*}
Then by Pitt's inequality (as in our case $\beta=\frac{1}{4}, p=2$ and hence $\beta_1= \beta + \frac{1}{2} - \frac{1}{p} =\frac{1}{4}$, both the conditions of Lemma \ref{Pitt's_ineq} are satisfied),
\begin{eqnarray*}
 \int_{\R} {|\xi|}^{-\frac{1}{2}}\: {\left|\tilde{h}(\xi)\right|}^2\: d\xi
& \lesssim & \int_{\R} {|h(x)|}^2\: {|x|}^{\frac{1}{2}}\: dx \\
&=& c \int_{r_1}^{r_2} {|h(s)|}^{2}\: s^{\frac{1}{2}}\: ds \\
&\le & c \:r^{1/2}_2 \: {\|h\|}^2_{{L^2}(r_1,r_2)}\:.
\end{eqnarray*}
Thus we get (\ref{thm1_pf_eq4}). Similarly, 
\begin{equation} \label{thm1_pf_eq7}
{\left(\bigintssss_{r_1}^{r_2} {\left|T_{\gamma,2} f(s)\right|}^2 A(s)\:ds\right)}^{1/2} \lesssim  {\left(\int_0^\infty \lambda^{1/2}\: {|\widehat{f}(\lambda)|}^2 {|{\bf c}(\lambda)|}^{-2} d\lambda\right)}^{1/2}\:.
\end{equation}
Finally for $T_{\gamma,3}$, using the estimate on the error term (\ref{thm1_pf_eq3}) and Cauchy-Schwarz inequality, we obtain
\begin{eqnarray*}
\left|T_{\gamma,3} f(s)\right| & \lesssim & {A(s)}^{-1/2}\: \int_0^\infty  {(1+\lambda)}^{-1}\: \left|\widehat{f}(\lambda)\right|\: {|{\bf c}(\lambda)|}^{-1}\: d\lambda \\
& \le & {A(s)}^{-1/2}\: \left[\int_0^1 \left|\widehat{f}(\lambda)\right|\: {|{\bf c}(\lambda)|}^{-1}\: d\lambda + \int_1^\infty \lambda^{-1}\:\left|\widehat{f}(\lambda)\right|\: {|{\bf c}(\lambda)|}^{-1}\: d\lambda  \right] \\
& \le & {A(s)}^{-1/2}\: {\|f\|}_{\dot{H}^{1/4}(S)} \: \left[{\left(\int_0^1 \frac{d\lambda}{\lambda^{1/2}}\right)}^{1/2} + {\left(\int_1^\infty \frac{d\lambda}{\lambda^{5/2}}\right)}^{1/2}\right] \\
& \lesssim & {A(s)}^{-1/2}\: {\|f\|}_{\dot{H}^{1/4}(S)} \:. 
\end{eqnarray*}
Thus
\begin{equation} \label{thm1_pf_eq8}
{\left(\bigintssss_{r_1}^{r_2} {\left|T_{\gamma,3} f(s)\right|}^2 A(s)\:ds\right)}^{1/2} \lesssim {(r_2-r_1)}^{1/2}\:{\|f\|}_{\dot{H}^{1/4}(S)}  \:.
\end{equation}
Then combining (\ref{thm1_pf_eq4}), (\ref{thm1_pf_eq7}) and (\ref{thm1_pf_eq8}), we complete the proof.
\end{proof}

\begin{remark} \label{special_remark}
In the special case of $\mathbb{H}^3(-1)$, the $SL(2,\C)$-invariant Riemannian measure on $\mathbb{H}^3(-1)$ is $\sinh^2 s \: ds \: dk$, where $ds$ and $dk$ are the Lebesgue measure on $(0,\infty)$ and the Haar measure on $SU(2)$ respectively. As noted in section $3$, in this case, the spherical function is given by the explicit expression:
\begin{equation*}
\varphi_\lambda(s)=\frac{1}{2i \sinh(s)}\left(\frac{e^{i\lambda s}}{\lambda}+\frac{e^{-i\lambda s}}{\lambda}\right)\:,
\end{equation*}
and hence the proof of Theorem \ref{thm1} can be carried out with relative ease (as the corresponding error term is zero). The situation is exactly similar in the classical $3$-dimensional Euclidean space, as the spherical function on $\R^3$ is given by,
\begin{equation*}
\frac{\sin(\lambda s)}{\lambda s}= \frac{1}{2i s}\left(\frac{e^{i\lambda s}}{\lambda}+\frac{e^{-i\lambda s}}{\lambda}\right)\:.
\end{equation*}   
In the full generality of $\R^n$ however, the spherical function is somewhat lesser explicit and is given by $\J_{\frac{n-2}{2}}$, the modified Bessel function of order $\frac{n-2}{2}$. Although the oscillatory nature of $\J_{\frac{n-2}{2}}$ is well-studied, yet proceeding analogously as in the arguments of the proof of Theorem \ref{thm1} does not seem to be an easy task.
\end{remark}

Aiming to eventually obtain some Euclidean results, we now focus on `small annuli'. Let 
\begin{equation*}
0<\delta_1<\delta_2 \le (2R_0)/(3C_3)\:,
\end{equation*}
where $R_0$ is as in Lemma \ref{bessel_series_expansion}. As in Theorem \ref{thm1}, we also assume that $\gamma$ satisfies $(\mathscr{H}_1)$ for $\alpha \in [\frac{1}{2}, 1]$ and $(\mathscr{H}_2)$ for $s,s' \in [0,\delta_2)$ and $t,t' \in [-T,T]$, for some fixed $T>0$, such that 
\begin{equation*}
T^\alpha < (C_2\delta_1)/(2C_1)\:.
\end{equation*}
Then by Lemma \ref{curve_growth_lemma}, we have for all $s \in (\delta_1,\delta_2)$ and $t \in [-T,T]$,
\begin{equation*}
d(e,\gamma_s(t)) < \frac{3C_3}{2}s < \frac{3C_3}{2}\delta_2 \le R_0\:. 
\end{equation*}
Thus for $f \in \mathscr{S}^2(S)_o$, we can also decompose the linearized maximal function (\ref{linearized_max_fn}) in terms of the Bessel series expansion of $\varphi_\lambda$ (by taking $M=0$ in Lemma \ref{bessel_series_expansion}),
\begin{eqnarray*} 
T_\gamma f(s) &=& \int_{0}^\infty \varphi_\lambda(d(e,\gamma_s(t(s))))\:e^{it(s)\left(\lambda^2 + \frac{Q^2}{4}\right)}\:\widehat{f}(\lambda)\: {|{\bf c}(\lambda)|}^{-2}\: d\lambda \nonumber\\
&=& c_0 {\left(\frac{d(e,\gamma_s(t(s)))^{n-1}}{A(d(e,\gamma_s(t(s))))}\right)}^{1/2} \int_0^\infty \J_{\frac{n-2}{2}}(\lambda d(e,\gamma_s(t(s))))\:e^{it(s)\left(\lambda^2 + \frac{Q^2}{4}\right)}\:\widehat{f}(\lambda)\: {|{\bf c}(\lambda)|}^{-2}\: d\lambda \nonumber\\
&& + \int_{0}^\infty E(\lambda, d(e,\gamma_s(t(s))))\:e^{it(s)\left(\lambda^2 + \frac{Q^2}{4}\right)}\:\widehat{f}(\lambda)\: {|{\bf c}(\lambda)|}^{-2}\: d\lambda \nonumber\\
&=& T_{\gamma,4} f(s) + T_{\gamma,5} f(s)\:.
\end{eqnarray*} 

We obtain the following boundedness result for $T_{\gamma,4}$, which will be crucial for the proof of Theorem \ref{thm2} in the next section:

\begin{corollary} \label{small_annuli_cor}
Let $0<\delta_1<\delta_2 \le (2R_0)/(3C_3)$, where $R_0$ is as in Lemma \ref{bessel_series_expansion}. Assume that $\gamma$ satisfies $(\mathscr{H}_1)$ for $\alpha \in [\frac{1}{2}, 1]$ and $(\mathscr{H}_2)$ for $s,s' \in [0,\delta_2)$ and $t,t' \in [-T,T]$, for some fixed $T>0$, such that $T^\alpha < (C_2\delta_1)/(2C_1)$. Then we have for all $f \in \mathscr{S}^2(S)_o$\:,
\begin{equation*} 
{\left(\bigintssss_{\delta_1}^{\delta_2} {\left| T_{\gamma,4} f(s)\right|}^2 A(s)\:ds\right)}^{1/2} \lesssim \: {\|f\|}_{\dot{H}^{1/4}(S)}\:,
\end{equation*}
where the implicit constant depends only on $\delta_1,\delta_2$ and the dimension of $S$.
\end{corollary}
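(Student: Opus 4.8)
The plan is to avoid re-running the oscillatory machinery of Theorem~\ref{thm1} and instead to obtain the bound for $T_{\gamma,4}$ by subtraction. The hypotheses imposed on $\gamma$ in the corollary are precisely those of Theorem~\ref{thm1} with $r_1=\delta_1$ and $r_2=\delta_2$ (note that $T^\alpha<(C_2\delta_1)/(2C_1)$ is the same as $0<T<((C_2\delta_1)/(2C_1))^{1/\alpha}$), and by the definition \eqref{linearized_max_fn} of the linearized maximal function one has, for every measurable $t(\cdot)\colon(\delta_1,\delta_2)\to[-T,T]$ and every $s\in(\delta_1,\delta_2)$,
\begin{equation*}
|T_\gamma f(s)|=\bigl|S_{t(s)}f(d(e,\gamma_s(t(s))))\bigr|\le\sup_{t\in[-T,T]}\bigl|S_tf(d(e,\gamma_s(t)))\bigr|\:.
\end{equation*}
Hence Theorem~\ref{thm1} already gives $\bigl(\int_{\delta_1}^{\delta_2}|T_\gamma f(s)|^2A(s)\,ds\bigr)^{1/2}\lesssim\|f\|_{\dot{H}^{1/4}(S)}$. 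Since $T_\gamma f=T_{\gamma,4}f+T_{\gamma,5}f$ is an exact identity on $(\delta_1,\delta_2)$ — it is Lemma~\ref{bessel_series_expansion} with $M=0$, applicable because $d(e,\gamma_s(t(s)))<\tfrac{3C_3}{2}\delta_2\le R_0$ by Lemma~\ref{curve_growth_lemma} — the triangle inequality reduces the corollary to proving
\begin{equation*}
\Bigl(\int_{\delta_1}^{\delta_2}|T_{\gamma,5}f(s)|^2\,A(s)\,ds\Bigr)^{1/2}\lesssim\|f\|_{\dot{H}^{1/4}(S)}\:.
\end{equation*}

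For the error term I would fix $s\in(\delta_1,\delta_2)$ and write $\kappa_s:=d(e,\gamma_s(t(s)))$; Lemma~\ref{curve_growth_lemma} gives $\tfrac{C_2}{2}\delta_1<\kappa_s<\tfrac{3C_3}{2}\delta_2$, so $\kappa_s\asymp1$ uniformly in $s$. Feeding the $M=0$ error bound of Lemma~\ref{bessel_series_expansion} into the definition of $T_{\gamma,5}$ and splitting the $\lambda$-integral at $\lambda\kappa_s=1$ gives
\begin{equation*}
|T_{\gamma,5}f(s)|\lesssim\int_0^{1/\kappa_s}|\widehat{f}(\lambda)|\,{|{\bf c}(\lambda)|}^{-2}\,d\lambda+\int_{1/\kappa_s}^\infty\lambda^{-\left(\frac{n-1}{2}+1\right)}|\widehat{f}(\lambda)|\,{|{\bf c}(\lambda)|}^{-2}\,d\lambda\:.
\end{equation*}
In the first integral $\lambda$ ranges over a bounded set; Cauchy--Schwarz against the $\lambda^{1/2}$-weight defining $\dot{H}^{1/4}(S)$, together with ${|{\bf c}(\lambda)|}^{-2}\asymp\lambda^2(1+\lambda)^{n-3}$ from \eqref{plancherel_measure}, bounds it by $\|f\|_{\dot{H}^{1/4}(S)}$, using $\int_0^C\lambda^{-1/2}{|{\bf c}(\lambda)|}^{-2}\,d\lambda<\infty$. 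In the second integral the same Cauchy--Schwarz reduces matters to the convergence of $\int_c^\infty\lambda^{-(n-1)-\frac52}{|{\bf c}(\lambda)|}^{-2}\,d\lambda$, whose integrand is $\asymp\lambda^{-5/2}$ for large $\lambda$ by \eqref{plancherel_measure}; so this term is $\lesssim\|f\|_{\dot{H}^{1/4}(S)}$ as well. Thus $|T_{\gamma,5}f(s)|\lesssim\|f\|_{\dot{H}^{1/4}(S)}$ uniformly in $s\in(\delta_1,\delta_2)$, and since $\int_{\delta_1}^{\delta_2}A(s)\,ds<\infty$ the desired $L^2(A\,ds)$ bound for $T_{\gamma,5}$ follows, completing the argument.

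The conceptual point — and the only genuine obstacle — is to recognize that one should peel off $T_{\gamma,4}$ from the already-controlled $T_\gamma$ rather than attack $T_{\gamma,4}$ head on. A direct attack would require substituting the large-argument asymptotics $\J_{\frac{n-2}{2}}(z)\sim z^{-(n-1)/2}\bigl(\Psi_+(z)e^{iz}+\Psi_-(z)e^{-iz}\bigr)$ with $\Psi_\pm$ symbols of order $0$, checking that $\lambda^{-(n-1)/2}{|{\bf c}(\lambda)|}^{-1}$ is again a symbol of order $0$ (via \eqref{c-fn_derivative_estimates}), and then re-running the $TT^*$, oscillatory-integral (Lemma~\ref{oscillatory_integral_lemma}) and Pitt's-inequality steps of Theorem~\ref{thm1} — feasible but heavier, and essentially the difficulty singled out in Remark~\ref{special_remark} for the genuinely Euclidean setting, where no analogue of Theorem~\ref{thm1} is available to subtract from. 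Beyond that, the only quantitative item needing care is the convergence of the second error integral above: it works because the large-frequency exponent $\tfrac{n-1}{2}+1$ in the error estimate of Lemma~\ref{bessel_series_expansion} strictly exceeds $\tfrac{n-1}{2}$, which supplies just enough extra decay to absorb the $\lambda^{n-1}$ growth of ${|{\bf c}(\lambda)|}^{-2}$.
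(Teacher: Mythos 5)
Your proposal is correct and takes essentially the same route as the paper: the paper likewise reduces via the decomposition $T_\gamma = T_{\gamma,4}+T_{\gamma,5}$ and Theorem~\ref{thm1} to bounding $T_{\gamma,5}$, and then controls $T_{\gamma,5}$ pointwise via the $M=0$ error estimate of Lemma~\ref{bessel_series_expansion}, Cauchy--Schwarz against the $\lambda^{1/2}$ weight, and the asymptotics (\ref{plancherel_measure}) of ${|{\bf c}(\lambda)|}^{-2}$ on the two frequency ranges. The only cosmetic difference is that the paper tracks the powers of $d(e,\gamma_s(t(s)))\asymp s$ explicitly rather than absorbing them at once via $\kappa_s\asymp 1$.
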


\begin{proof}
By Theorem \ref{thm1}, it suffices to prove that
\begin{equation} \label{cor4.1_eq1}
{\left(\bigintssss_{\delta_1}^{\delta_2} {\left| T_{\gamma,5} f(s)\right|}^2 A(s)\:ds\right)}^{1/2} \lesssim \: {\|f\|}_{\dot{H}^{1/4}(S)}\:,
\end{equation}
where the implicit constant depends only on $\delta_1,\delta_2$ and the dimension of $S$. For $\lambda \ge 0$, we recall the estimates on the error term in Lemma \ref{bessel_series_expansion} (for $M=0$),
\begin{equation*}
	\left|E(\lambda,d(e,\gamma_s(t(s)))) \right| \lesssim \begin{cases}
	 d(e,\gamma_s(t(s)))^2  & \text{ if  }\: \lambda d(e,\gamma_s(t(s))) \le 1 \\
	d(e,\gamma_s(t(s)))^2\: \{\lambda d(e,\gamma_s(t(s)))\}^{-\left(\frac{n+1}{2}\right)} &\text{ if  }\: \lambda d(e,\gamma_s(t(s))) > 1 \:.
	\end{cases}
\end{equation*}
Then using the above pointwise bounds, Lemma \ref{curve_growth_lemma} and Cauchy-Schwarz inequality, we obtain
\begin{eqnarray*}
\left| T_{\gamma,5} f(s)\right| & \lesssim & d(e,\gamma_s(t(s)))^2 \bigintssss_0^{1/d(e,\gamma_s(t(s)))} \left|\widehat{f}(\lambda)\right|\: {|{\bf c}(\lambda)|}^{-2}\: d\lambda \\
&& + \frac{1}{(d(e,\gamma_s(t(s))))^{\left(\frac{n-3}{2}\right)}} \bigintssss_{1/d(e,\gamma_s(t(s)))}^\infty \lambda^{-\left(\frac{n+1}{2}\right)} \left|\widehat{f}(\lambda)\right|\: {|{\bf c}(\lambda)|}^{-2}\: d\lambda \\
&\lesssim & {\|f\|}_{\dot{H}^{1/4}(S)} \left[s^2 I_1(s)^{1/2} + \frac{1}{s^{\left(\frac{n-3}{2}\right)}} I_2(s)^{1/2} \right]\:,
\end{eqnarray*} 
where 
\begin{equation*}
I_1(s) = \bigintssss_0^{1/d(e,\gamma_s(t(s)))} \frac{{|{\bf c}(\lambda)|}^{-2}\: d\lambda}{\lambda^{1/2}} \:,\:\: \text{ and } \:\: I_2(s) = \bigintssss_{1/d(e,\gamma_s(t(s)))}^\infty \frac{{|{\bf c}(\lambda)|}^{-2}\: d\lambda}{\lambda^{n+\frac{3}{2}}} \:. 
\end{equation*}
Now first using the asymptotics of ${|{\bf c}(\lambda)|}^{-2}$ given in (\ref{plancherel_measure}) for small frequency and Lemma \ref{curve_growth_lemma}, we get that 
\begin{equation*}
I_1(s) \lesssim \bigintssss_0^{1/d(e,\gamma_s(t(s)))} \lambda^{3/2}\: d\lambda \: \lesssim \: \frac{1}{s^{5/2}}\:.
\end{equation*}
Similarly the asymptotics of ${|{\bf c}(\lambda)|}^{-2}$ given in (\ref{plancherel_measure}) for large frequency and Lemma \ref{curve_growth_lemma} yield
\begin{equation*}
I_2(s) \lesssim \bigintssss_{1/d(e,\gamma_s(t(s)))}^\infty \:\:\frac{d\lambda}{\lambda^{5/2}} \: \lesssim \: s^{3/2} \:.
\end{equation*}
Hence, for $s \in (\delta_1, \delta_2)$,
\begin{equation*}
\left| T_{\gamma,5} f(s)\right| \lesssim {\|f\|}_{\dot{H}^{1/4}(S)} \: s^{\left(\frac{9-2n}{4}\right)}\:, 
\end{equation*}
and thus using the local growth asymptotics of the density function, we obtain
\begin{equation*}
\bigintssss_{\delta_1}^{\delta_2} {\left| T_{\gamma,5} f(s)\right|}^2 A(s)\:ds \lesssim  {\|f\|}^2_{\dot{H}^{1/4}(S)}\: \bigintssss_{\delta_1}^{\delta_2} s^{7/2}\:ds \:\lesssim_{\delta_1,\delta_2} \: {\|f\|}^2_{\dot{H}^{1/4}(S)}\:.
\end{equation*}
This completes the proof of Corollary \ref{small_annuli_cor}.
\end{proof}

\section{Results on $\R^n$}
Our aim in this section is to prove Theorems \ref{thm2} and \ref{thm3}.

For $f \in \mathscr{S}(\R^n)_o$, with $n \ge 2$, the linearization of the maximal function appearing in Theorem \ref{thm2} is given by, 
\begin{equation} \label{linearized_max_fn_Rn}
\tilde{T}_\gamma f(s) := \int_{0}^\infty \J_{\frac{n-2}{2}}(\lambda \|\gamma_s(t(s))\|)\:e^{it(s)\lambda^2}\:\mathscr{F}f(\lambda)\: \lambda^{n-1}\: d\lambda \:,
\end{equation} 
where $t(\cdot): (\delta_1,\delta_2) \to [-T,T]$ is a measurable function. We first consider the case when $\mathscr{F}f$ is supported near the origin.
\begin{lemma} \label{euclidean_small_freq_lemma}
Under the hypothesis of Theorem \ref{thm2}, we have for all $f \in \mathscr{S}(\R^n)_o$ with $Supp\left(\mathscr{F}f\right) \subset [0,\Lambda]$ for some $\Lambda>0$,
\begin{equation*} 
{\left(\bigintssss_{\delta_1}^{\delta_2} {\left|\tilde{T}_\gamma f(s)\right|}^2 \:s^{n-1}\:ds\right)}^{1/2} \lesssim \: {\|f\|}_{\dot{H}^{1/4}(\R^n)}\:,
\end{equation*}
where the implicit constant depends only on $\Lambda,\delta_1,\delta_2$ and $n$.
\end{lemma}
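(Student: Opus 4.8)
The plan is to exploit the compactness of the frequency support and reduce the estimate to a soft application of the Cauchy--Schwarz inequality: since $\mathscr{F}f$ is supported in $[0,\Lambda]$, neither the time localization $(\mathscr{H}_3)$ nor the geometric hypothesis $(\mathscr{H}_4)$ on the curve $\gamma$ plays any role, and one may simply discard all the oscillation in (\ref{linearized_max_fn_Rn}).

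First I would record the elementary bound $|\J_{\frac{n-2}{2}}(z)| \le C_n$ for all $z \ge 0$, which holds because, for $\mu = \frac{n-2}{2} \ge 0$, the modified Bessel function $\J_\mu$ extends continuously to $z=0$ and decays like $z^{-\mu-1/2}$ as $z \to \infty$. Inserting this together with $|e^{it(s)\lambda^2}| = 1$ into (\ref{linearized_max_fn_Rn}) and using that the integral runs effectively over $[0,\Lambda]$ gives, for every $s \in (\delta_1,\delta_2)$,
\[
\bigl|\tilde{T}_\gamma f(s)\bigr| \le C_n \int_0^\Lambda \bigl|\mathscr{F}f(\lambda)\bigr|\,\lambda^{n-1}\,d\lambda .
\]
Next I would write $\lambda^{n-1} = \bigl(\lambda^{1/4}\lambda^{(n-1)/2}\bigr)\bigl(\lambda^{-1/4}\lambda^{(n-1)/2}\bigr)$ and apply Cauchy--Schwarz in the Plancherel measure $\lambda^{n-1}\,d\lambda$:
\[
\int_0^\Lambda \bigl|\mathscr{F}f(\lambda)\bigr|\,\lambda^{n-1}\,d\lambda \le \left(\int_0^\infty \lambda^{1/2}\,\bigl|\mathscr{F}f(\lambda)\bigr|^2\,\lambda^{n-1}\,d\lambda\right)^{1/2}\left(\int_0^\Lambda \lambda^{\,n-3/2}\,d\lambda\right)^{1/2} .
\]
Here the first factor equals $\|f\|_{\dot{H}^{1/4}(\R^n)}$ and the second is a finite constant $C_{\Lambda,n}$ since $n - 3/2 > -1$ for $n \ge 2$; hence $|\tilde{T}_\gamma f(s)| \lesssim_{\Lambda,n} \|f\|_{\dot{H}^{1/4}(\R^n)}$ uniformly in $s$. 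Squaring this and integrating against $s^{n-1}\,ds$ over the bounded interval $(\delta_1,\delta_2)$ introduces only the finite factor $\int_{\delta_1}^{\delta_2} s^{n-1}\,ds$ and yields the asserted inequality, with a constant depending only on $\Lambda$, $\delta_1$, $\delta_2$ and $n$.

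I do not anticipate any genuine obstacle: the entire analytic weight of Theorem \ref{thm2} lies in the complementary high-frequency regime, where $\mathscr{F}f$ is supported away from the origin and one transfers the problem to a suitable Damek--Ricci space in order to invoke Corollary \ref{small_annuli_cor}, whereas the low-frequency piece handled by this lemma is purely soft. The only point demanding the slightest care is that the bound on $\J_{\frac{n-2}{2}}$ be taken uniformly over $[0,\infty)$; alternatively, by the Euclidean analogue of Lemma \ref{curve_growth_lemma} the argument $\lambda\|\gamma_s(t(s))\|$ stays in a fixed compact set for $s \in (\delta_1,\delta_2)$ and $\lambda \in [0,\Lambda]$, so continuity of $\J_{\frac{n-2}{2}}$ alone already suffices.
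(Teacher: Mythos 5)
Your proposal is correct and matches the paper's proof essentially line for line: both use the uniform boundedness of $\J_{\frac{n-2}{2}}$ to reduce to $\int_0^\Lambda |\mathscr{F}f(\lambda)|\lambda^{n-1}\,d\lambda$, then split $\lambda^{n-1} = \lambda^{1/4}\lambda^{(n-1)/2}\cdot\lambda^{-1/4}\lambda^{(n-1)/2}$ and apply Cauchy--Schwarz to extract $\|f\|_{\dot{H}^{1/4}(\R^n)}$ against the finite constant $\bigl(\int_0^\Lambda \lambda^{n-3/2}\,d\lambda\bigr)^{1/2}$. Your added observations (that $(\mathscr{H}_3)$, $(\mathscr{H}_4)$ play no role here, and that one could alternatively use compactness of the argument range) are correct but not needed.
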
   
\begin{proof}
The Lemma follows from the boundedness of the modified Bessel functions and Cauchy-Schwarz inequality. Indeed, for $s \in (\delta_1,\delta_2)$,
\begin{equation*}
\left|\tilde{T}_\gamma f(s)\right| \lesssim \int_0^\Lambda \left|\mathscr{F}f(\lambda)\right|\: \lambda^{n-1}\: d\lambda \le {\|f\|}_{\dot{H}^{1/4}(\R^n)}{\left(\int_0^\Lambda \lambda^{n-\frac{3}{2}}\: d\lambda\right)}^{1/2} \lesssim_{\Lambda} {\|f\|}_{\dot{H}^{1/4}(\R^n)} \:.
\end{equation*}
Hence, the result follows.
\end{proof}

We now turn our attention to the situation when $\mathscr{F}f$ is supported away from the origin. In this case, we need to do some local geometry. For each $n \ge 2$, there exists at least one $n$-dimensional Damek-Ricci space $S$ (in particular, the degenerate case of Real hyperbolic spaces).

We now consider the Riemannian exponential map at $e$, denoted by $\exp_e$ defined on $T_eS$, the tangent space of $S$ at the identity $e$. As Damek-Ricci spaces are connected, simply-connected, complete Riemannian manifolds of non-positive sectional curvature, $\exp_e$ defines a global diffeomorphism from $T_eS$ onto $S$. Now for a general Riemannian manifold, $\exp_e$ is always a local radial isometry. Hence, there exists $\varepsilon >0$, such that $\exp_e$ maps the ball $\mathcal{B}(0,\varepsilon) \subset T_eS$ (with respect to the norm $\tilde{d}(\cdot)$ induced by the Riemannian metric) diffeomorphically onto the geodesic ball $\mathscr{B}(e,\varepsilon) \subset S$, with the property that for any $p \in \mathcal{B}(0,\varepsilon)$, we have
\begin{equation} \label{isometry1}
d(e,exp_e(p))=\tilde{d}(p)\:,
\end{equation}
(see \cite[p. 636]{HRWW}). Moreover, since any two finite dimensional inner product spaces with the same dimension, are isometrically isomorphic, there exists a linear isometry $\mathcal{I}$ from $\R^n$ equipped with the flat metric onto $T_eS$ equipped with the Riemannian metric. Hence for any $x \in \R^n$, we have
\begin{equation} \label{isometry2}
\|x\|=\tilde{d}\left(\mathcal{I}(x)\right)\:.
\end{equation} 
We now restrict to the Euclidean ball $B(o,\varepsilon)$ and consider the composition,
\begin{equation*}
\mathcal{E}:= \exp_e \circ \: \mathcal{I}\:.
\end{equation*}
Thus combining (\ref{isometry1}) and (\ref{isometry2}), it follows that $\mathcal{E}$ maps the ball $B(o,\varepsilon) \subset \R^n$ diffeomorphically onto the geodesic ball $\mathscr{B}(e,\varepsilon) \subset S$, with the property that for any $x \in B(o,\varepsilon)$, we have,
\begin{equation} \label{local_isometry}
d\left(e,\mathcal{E}(x)\right) = \|x\|\:.
\end{equation}
As we are only interested in radial functions, identifying all the points on a geodesic sphere centered at $e$, contained in the geodesic ball $\mathscr{B}(e,\varepsilon)$, solely by their distance from $e$ and using (\ref{local_isometry}), we make the abuse of notation, 
\begin{equation} \label{abuse_of_notation}
\mathcal{E}(x) \sim d\left(e,\mathcal{E}(x)\right)=\|x\|,\:\: \text{ for } x \in B(o,\varepsilon).
\end{equation}
Now under the hypothesis of Theorem \ref{thm2}, a computation similar to Lemma \ref{curve_growth_lemma} using $(\mathscr{H}_3)$ and $(\mathscr{H}_4)$ yields that for all $s \in [0,\delta_2)$ and $t \in [-T,T]$,
\begin{equation*}
\|\gamma_s(t)\| < \frac{3C_6}{2}\delta_2\:.
\end{equation*}
Then furthermore assuming $\delta_2 \le (2\varepsilon)/(3C_6)$, we get that $\|\gamma_s(t)\| < \varepsilon$, for all $s \in [0,\delta_2)$ and $t \in [-T,T]$. Hence it makes sense to consider the pushforwards (by the map $\mathcal{E}$) of the curves $\gamma$ for $s \in [0,\delta_2)$ and $t \in [-T,T]$ , which we write by (\ref{abuse_of_notation}) as,
\begin{equation*}
{\left(\mathcal{E}_* \gamma\right)}_s(t):= \mathcal{E}(\gamma_s(t))\:.
\end{equation*}
We now summarize some important properties of these pushforwarded curves:
\begin{lemma} \label{pushfwd_lemma}
Under the hypothesis of Theorem \ref{thm2}, with $\delta_2 \le (2\varepsilon)/(3C_6)$, where $\varepsilon$ is as in (\ref{local_isometry}), for $s,s' \in [0,\delta_2)$ and $t,t' \in [-T,T]$, we have for $\alpha \in [\frac{1}{2},1]$,
\begin{eqnarray*}
&&(i) \left|d\left(e,{\left(\mathcal{E}_* \gamma\right)}_s(t)\right) - d\left(e,{\left(\mathcal{E}_* \gamma\right)}_s(t')\right)\right| \le  C_4 {|t-t'|}^\alpha \:, \\
&&(ii)\: C_5 |s-s'| \le \left|d\left(e,{\left(\mathcal{E}_* \gamma\right)}_s(t)\right) - d\left(e,{\left(\mathcal{E}_* \gamma\right)}_{s'}(t)\right)\right| \le C_6 |s-s'|\:.
\end{eqnarray*}
\end{lemma}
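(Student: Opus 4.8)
The plan is to reduce both assertions to the single radial isometry identity (\ref{local_isometry}) for the map $\mathcal{E}$. First I would recall the range bound established just before the statement: under the hypothesis of Theorem \ref{thm2} a computation in the spirit of Lemma \ref{curve_growth_lemma}, using $(\mathscr{H}_3)$ and $(\mathscr{H}_4)$, gives $\|\gamma_s(t)\| < \tfrac{3C_6}{2}\delta_2$ for all $s \in [0,\delta_2)$ and $t \in [-T,T]$, so the standing restriction $\delta_2 \le (2\varepsilon)/(3C_6)$ forces $\|\gamma_s(t)\| < \varepsilon$ on the whole admissible parameter range. Hence $\gamma_s(t) \in B(o,\varepsilon)$ for all such $s,t$, and (\ref{local_isometry}) applies to give
\[
d\left(e,{\left(\mathcal{E}_* \gamma\right)}_s(t)\right) = d\left(e,\mathcal{E}(\gamma_s(t))\right) = \|\gamma_s(t)\|,
\]
for every admissible $(s,t)$. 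In words: the distance from $e$ of the pushforwarded curve equals, pointwise, the Euclidean norm of the original curve.

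With this identity in hand, both parts are immediate. For $(i)$, differencing the displayed identity at $t$ and $t'$ (same $s$) gives
\[
\left|d\left(e,{\left(\mathcal{E}_* \gamma\right)}_s(t)\right) - d\left(e,{\left(\mathcal{E}_* \gamma\right)}_s(t')\right)\right| = \left|\,\|\gamma_s(t)\| - \|\gamma_s(t')\|\,\right|,
\]
and the right-hand side is $\le C_4 |t-t'|^\alpha$ by $(\mathscr{H}_3)$. For $(ii)$, differencing the identity at $s$ and $s'$ (same $t$) gives
\[
\left|d\left(e,{\left(\mathcal{E}_* \gamma\right)}_s(t)\right) - d\left(e,{\left(\mathcal{E}_* \gamma\right)}_{s'}(t)\right)\right| = \left|\,\|\gamma_s(t)\| - \|\gamma_{s'}(t)\|\,\right|,
\]
and $(\mathscr{H}_4)$ bounds this from below by $C_5|s-s'|$ and from above by $C_6|s-s'|$. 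Thus $\mathcal{E}_*\gamma$ satisfies $(\mathscr{H}_1)$ with the same exponent $\alpha$ and $(\mathscr{H}_2)$ with the same constants $C_4, C_5, C_6$.

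The only genuinely necessary care, and hence the sole \emph{obstacle}, is the bookkeeping that confines the curves to the domain $B(o,\varepsilon)$ on which (\ref{local_isometry}) is valid; once the range condition on $\delta_2$ is imposed, the proof is a one-line consequence of the radial-isometry property. I would emphasize in the write-up that no feature of $\mathcal{E}$ beyond (\ref{local_isometry}) is invoked: in particular the fact that $\mathcal{E}$ is only a \emph{radial} local isometry near $e$ (and not a global isometry) is irrelevant here, precisely because conditions $(\mathscr{H}_1)$–$(\mathscr{H}_4)$ are phrased solely in terms of distances to the base point.
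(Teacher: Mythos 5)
Your proposal is correct and matches the paper's own proof: both parts reduce immediately to the radial isometry identity $d(e,\mathcal{E}(\gamma_s(t)))=\|\gamma_s(t)\|$ together with $(\mathscr{H}_3)$ and $(\mathscr{H}_4)$, with the range bound $\|\gamma_s(t)\|<\varepsilon$ (guaranteed by the restriction on $\delta_2$) ensuring the identity applies. Your added emphasis that only the radial-isometry property of $\mathcal{E}$ is used is a fair observation, but the substance of the argument is the same as the paper's.
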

\begin{proof}
Using (\ref{local_isometry}) and $(\mathscr{H}_3)$, property $(i)$ is proved as follows,
\begin{eqnarray*}
\left|d\left(e,{\left(\mathcal{E}_* \gamma\right)}_s(t)\right) - d\left(e,{\left(\mathcal{E}_* \gamma\right)}_s(t')\right)\right| 
&=& \left|d\left(e,\mathcal{E}(\gamma_s(t))\right)- d\left(e,\mathcal{E}(\gamma_s(t'))\right)\right| \\
&=& \left|\|\gamma_s(t)\|-\|\gamma_s(t')\|\right| \\
& \le & C_4 {|t-t'|}^\alpha \:.
\end{eqnarray*}
Property $(ii)$ can be verified similarly.
\end{proof}

We next discuss some ideas on how to obtain a natural connection between spaces of radial Schwartz class functions on $\R^n$ and $S$ with Spherical Fourier transforms supported away from the origin. These ideas have also been explored recently in \cite{DR} by the author and Ray. For the sake of completion, we briefly mention them here.

By the well-known properties of the Abel transform, we have the following commutative diagram, where
\begin{itemize}
\item $\mathscr{A}_{S,\R}$ is the Abel transform defined from $\mathscr{S}^2(S)_{o}$ to ${\mathscr{S}(\R)}_{e}$\:.
\item $\mathscr{A}_{\R^n,\R}$ is the Abel transform defined from $\mathscr{S}(\R^n)_{o}$ to ${\mathscr{S}(\R)}_{e}$\:.
\item $\wedge$ denotes the Spherical Fourier transform from $\mathscr{S}^2(S)_{o}$ to ${\mathscr{S}(\R)}_{e}$\:.
\item $\mathscr{F}$ denotes the Euclidean Spherical Fourier transform from $\mathscr{S}(\R^n)_{o}$ to ${\mathscr{S}(\R)}_{e}$\:.
\item $\sim$ denotes the 1-dimensional Euclidean Fourier transform from ${\mathscr{S}(\R)}_{e}$ to itself.
\end{itemize}

\[
\begin{tikzcd}[row sep=1.4cm,column sep=1.4cm]
\mathscr{S}^2(S)_{o}\arrow[r,"\mathscr{A}_{S,\R}"] \arrow[dr,swap,"\wedge"] & {\mathscr{S}(\R)}_{e}
\arrow[d,"\sim"] &
\arrow[l,"\mathscr{A}_{\R^n,\R}",swap]  \mathscr{S}(\R^n)_{o} \arrow[dl,"\mathscr{F}"] \\
&  {\mathscr{S}(\R)}_{e}&
\end{tikzcd}
\]

Then, since all the maps above are topological isomorphisms, defining 
\begin{equation*}
\mathscr{A}:=\mathscr{A}^{-1}_{\R^n,\R} \circ \mathscr{A}_{S,\R}\:,
\end{equation*}
one can reduce matters to the following simplified commutative diagram:

\[
\begin{tikzcd}[row sep=1.4cm,column sep=1.4cm]
\mathscr{S}^2(S)_{o}\arrow[r,"\mathscr{A}"] \arrow[dr,swap,"\wedge"] & \mathscr{S}(\R^n)_{o}
\arrow[d,"\mathscr{F}"]   \\
&  {\mathscr{S}(\R)}_{e}&
\end{tikzcd}
\]

Hence, for $g \in \mathscr{S}^2(S)_{o}$, we get
\begin{equation} \label{abel_correspondence}
\widehat{g}(\lambda)= \mathscr{F}(\mathscr{A}g)(\lambda)\:.
\end{equation}  
Next let us define
\begin{equation*}
\mathscr{S}(\R)^\infty_{e} = \left\{\kappa \in \mathscr{S}(\R)_{e} \:|\: 0 \notin Supp(\kappa) \right\} \:.
\end{equation*}
So, $\mathscr{S}(\R)^\infty_{e}$ is the collection of all even Schwartz class functions on $\R$ which are supported outside an interval containing $0$. We now define a map $\mathfrak{m}: \mathscr{S}(\R)^\infty_{e}\to \mathscr{S}(\R)^\infty_{e}$, by the formula
\begin{equation*}
\mathfrak{m}(\kappa)(\lambda):= \frac{{|{\bf c}(\lambda)|}^{-2}}{\lambda^{n-1}} \kappa(\lambda).
\end{equation*}
Because of the derivative estimates of ${|{\bf c}(\lambda)|}^{-2}$ (see (\ref{c-fn_derivative_estimates})), the map $\mathfrak m$ is well defined and is a bijection with the inverse given by,
\begin{equation*}
\mathfrak{m}^{-1}(\kappa)(\lambda):= \frac{\lambda^{n-1}}{{|{\bf c}(\lambda)|}^{-2}} \kappa(\lambda).
\end{equation*}
We also define
\begin{equation*}
\mathscr{S}(\R^n)^\infty_{o}=\{h\in\mathscr{S}(\R^n)_{o}\mid \mathscr{F}h\in \mathscr{S}(\R)^\infty_{e}\},
\end{equation*}
so that $\mathscr{S}(\R^n)^\infty_{o} := \mathscr{F}^{-1}\left(\mathscr{S}(\R)^\infty_{e}\right)$. As the Euclidean Fourier transform $\mathscr{F}$ is a bijection between $\mathscr{S}(\R^n)_{o}$ and $\mathscr{S}(\R)_{e}$, we get an induced map $\mathcal{M}$ obtained by conjugating $\mathfrak{m}$ with $\mathscr{F}$, which is a bijection from $\mathscr{S}(\R^n)^\infty_{o}$ onto itself:

\[
\begin{tikzcd}[row sep=1.4cm,column sep=1.4cm]
\mathscr{S}(\R^n)^\infty_{o} \arrow[r,"\mathcal{M}"] \arrow[d,swap,"\mathscr{F}"] & \mathscr{S}(\R^n)^\infty_{o}
\arrow[d,"\mathscr{F}"]   \\
{\mathscr{S}(\R)}^\infty_{e}  \arrow[r,"\mathfrak{m}"]  &  {\mathscr{S}(\R)}^\infty_{e}&
\end{tikzcd}
\]

Thus for $h \in \mathscr{S}(\R^n)^\infty_{o}$, we get that
\begin{equation*} 
\mathscr{F}(\mathcal{M}h)(\lambda)=\frac{{|{\bf c}(\lambda)|}^{-2}}{\lambda^{n-1}}  \mathscr{F}h(\lambda)\:,
\end{equation*}
and similarly,
\begin{equation} \label{schwartz_multiplier}
\mathscr{F}(\mathcal{M}^{-1}h)(\lambda)=\frac{\lambda^{n-1}}{{|{\bf c}(\lambda)|}^{-2}}  \mathscr{F}h(\lambda)\:.
\end{equation}

\begin{lemma} \label{euclidean_large_freq_lemma}
Under the hypothesis of Theorem \ref{thm2}, with $\delta_2 \le \frac{2}{3C_6}\min\{\varepsilon, R_0\}$, where $\varepsilon$ is as in (\ref{local_isometry}) and $R_0$ is as in Lemma \ref{bessel_series_expansion}, we have for all $f \in \mathscr{S}(\R^n)_o$ with $Supp\left(\mathscr{F}(f)\right) \subset (1,\infty)$,
\begin{equation*} 
{\left(\bigintssss_{\delta_1}^{\delta_2} {\left|\tilde{T}_\gamma f(s)\right|}^2 \:s^{n-1}\:ds\right)}^{1/2} \lesssim \: {\|f\|}_{\dot{H}^{1/4}(\R^n)}\:,
\end{equation*}
where the implicit constant depends only on $\delta_1,\delta_2$ and $n$.
\end{lemma}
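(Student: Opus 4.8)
The idea is to transport the estimate to the auxiliary $n$-dimensional Damek--Ricci space $S$ fixed above and then appeal to Corollary~\ref{small_annuli_cor}. Since $\mathrm{Supp}(\mathscr{F}f)\subset(1,\infty)$ we have $f\in\mathscr{S}(\R^n)^\infty_{o}$, so the multiplier $\mathcal{M}^{-1}$ is available; I would set $g:=\mathscr{A}^{-1}\big(\mathcal{M}^{-1}f\big)$, which lies in $\mathscr{S}^2(S)_{o}$. By (\ref{abel_correspondence}) and (\ref{schwartz_multiplier}),
$$\widehat{g}(\lambda)=\mathscr{F}\big(\mathcal{M}^{-1}f\big)(\lambda)=\frac{\lambda^{n-1}}{{|{\bf c}(\lambda)|}^{-2}}\,\mathscr{F}f(\lambda),\qquad\text{equivalently}\qquad \widehat{g}(\lambda)\,{|{\bf c}(\lambda)|}^{-2}=\lambda^{n-1}\,\mathscr{F}f(\lambda),$$
and in particular $\mathrm{Supp}(\widehat{g})\subset(1,\infty)$.

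Next I would push the curve forward: by Lemma~\ref{pushfwd_lemma} the curve $\mathcal{E}_*\gamma$ satisfies $(\mathscr{H}_1)$--$(\mathscr{H}_2)$ on $S$ with $C_4,C_5,C_6$ playing the roles of $C_1,C_2,C_3$, while (\ref{local_isometry}) gives $d\big(e,(\mathcal{E}_*\gamma)_s(t(s))\big)=\|\gamma_s(t(s))\|=:\kappa_s$. Writing out the decomposition $T_\gamma=T_{\gamma,4}+T_{\gamma,5}$ preceding Corollary~\ref{small_annuli_cor} for the curve $\mathcal{E}_*\gamma$ and the data $g$, and substituting $\widehat{g}(\lambda)\,{|{\bf c}(\lambda)|}^{-2}=\lambda^{n-1}\mathscr{F}f(\lambda)$ into the integral defining $T_{\gamma,4}$, one obtains the pointwise identity
$$T^{(\mathcal{E}_*\gamma)}_{\gamma,4}g(s)=c_0\left(\frac{\kappa_s^{\,n-1}}{A(\kappa_s)}\right)^{1/2}e^{it(s)Q^2/4}\;\tilde{T}_\gamma f(s),\qquad s\in(\delta_1,\delta_2),$$
where $T^{(\mathcal{E}_*\gamma)}_{\gamma,4}$ denotes the operator $T_{\gamma,4}$ of Corollary~\ref{small_annuli_cor} for the curve $\mathcal{E}_*\gamma$. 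The hypothesis $\delta_2\le\frac{2}{3C_6}\min\{\varepsilon,R_0\}$ ensures, via Lemma~\ref{curve_growth_lemma} applied to $\mathcal{E}_*\gamma$, both that $\kappa_s<\varepsilon$ (so $\mathcal{E}_*\gamma$ is defined) and that $\kappa_s<R_0$ (so Lemma~\ref{bessel_series_expansion}, hence the decomposition, is valid there).

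Now I would pass to $L^2$. On $(\delta_1,\delta_2)$, Lemma~\ref{curve_growth_lemma} gives $\kappa_s\asymp s$ with both $s$ and $\kappa_s$ confined to a fixed compact subinterval of $(0,\infty)$, on which the density satisfies $A(\sigma)\asymp\sigma^{n-1}$; hence $A(\kappa_s)/\kappa_s^{\,n-1}\asymp 1$ and $A(s)\asymp s^{n-1}$, uniformly in $s$. Combining this with the pointwise identity and Corollary~\ref{small_annuli_cor} applied to $\mathcal{E}_*\gamma$ (whose hypotheses on $\delta_2$ and $T$ hold by Lemma~\ref{pushfwd_lemma} and the assumptions of Theorem~\ref{thm2}),
$$\int_{\delta_1}^{\delta_2}\big|\tilde{T}_\gamma f(s)\big|^2\,s^{n-1}\,ds\;\lesssim\;\int_{\delta_1}^{\delta_2}\big|T^{(\mathcal{E}_*\gamma)}_{\gamma,4}g(s)\big|^2\,A(s)\,ds\;\lesssim\;\|g\|_{\dot{H}^{1/4}(S)}^2.$$
It remains to compare the two Sobolev norms. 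Since $\widehat{g}$ is supported in $(1,\infty)$, where (\ref{plancherel_measure}) gives ${|{\bf c}(\lambda)|}^{-2}\asymp\lambda^{n-1}$, we get
$$\|g\|_{\dot{H}^{1/4}(S)}^2=\int_0^\infty\lambda^{1/2}\,\frac{\lambda^{2(n-1)}}{{|{\bf c}(\lambda)|}^{-2}}\,|\mathscr{F}f(\lambda)|^2\,d\lambda\;\asymp\;\int_1^\infty\lambda^{1/2}\,|\mathscr{F}f(\lambda)|^2\,\lambda^{n-1}\,d\lambda=\|f\|_{\dot{H}^{1/4}(\R^n)}^2,$$
which completes the proof. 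The only genuinely delicate point is this last comparison, and it is precisely where the frequency localization enters: the multiplier $\mathcal{M}^{-1}$ absorbs the ratio $\lambda^{n-1}/{|{\bf c}(\lambda)|}^{-2}$ of the two Plancherel densities, but this ratio is comparable to a constant only for $\lambda\gtrsim1$ (at low frequencies ${|{\bf c}(\lambda)|}^{-2}\asymp\lambda^2$, not $\lambda^{n-1}$, so the transplantation would shift the Sobolev exponent). Thus the hypothesis $\mathrm{Supp}(\mathscr{F}f)\subset(1,\infty)$ is the structural reason the order-$1/4$ homogeneous Sobolev norms on $S$ and $\R^n$ match; the complementary low-frequency piece is handled separately in Lemma~\ref{euclidean_small_freq_lemma}, after which Theorem~\ref{thm2} follows by a resolution of the identity.
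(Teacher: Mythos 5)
Your proposal is correct and follows essentially the same route as the paper: define $g=\mathscr{A}^{-1}\mathcal{M}^{-1}f$, push the curve forward via $\mathcal{E}$, identify $T_{\mathcal{E}_*\gamma,4}\,g$ with $\tilde{T}_\gamma f$ up to a bounded factor, apply Corollary~\ref{small_annuli_cor}, and compare the homogeneous Sobolev norms using (\ref{plancherel_measure}) on $(1,\infty)$. The only (minor, and welcome) difference is that you record the transplantation as an exact pointwise identity with the explicit prefactor $c_0\bigl(\kappa_s^{n-1}/A(\kappa_s)\bigr)^{1/2}e^{it(s)Q^2/4}$, whereas the paper states it directly as a two-sided comparison $\asymp$.
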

\begin{proof}
Let
\begin{equation*}
\mathscr{S}^2(S)^\infty_{o}=\{g\in\mathscr{S}^2(S)_{o}\mid \widehat{g} \in \mathscr{S}(\R)^\infty_{e}\},
\end{equation*}
so that $\mathscr{S}^2(S)^\infty_{o} := \wedge^{-1}\left(\mathscr{S}(\R)^\infty_{e}\right)$. Now as the Spherical Fourier transform $\wedge$ is a bijection from $\mathscr{S}^2(S)_{o}$ onto $\mathscr{S}(\R)_{e}$, its restriction also defines a bijection from $\mathscr{S}^2(S)^\infty_{o}$ onto $\mathscr{S}(\R)^\infty_{e}$. Then combining (\ref{abel_correspondence}), (\ref{schwartz_multiplier}) and the fact that $Supp\left(\mathscr{F}(f)\right) \subset (1,\infty)$, we define 
\begin{equation*}
g:= \left(\mathscr{A}^{-1} \circ \mathcal{M}^{-1} \right)f\:.
\end{equation*}
The interaction of $g$ with the Fourier transform is best understood via the following commutative diagram, where each arrow is a bijection: 
\[
\begin{tikzcd}[row sep=1.4cm,column sep=1.4cm]
\mathscr{S}(\R^n)^\infty_{o} \arrow[r,"\mathcal{M}^{-1}"] \arrow[d,swap,"\mathscr{F}"] & \mathscr{S}(\R^n)^\infty_{o}
\arrow[r,"\mathscr{A}^{-1}"]
\arrow[d,swap,"\mathscr{F}"] & \mathscr{S}^2(S)^\infty_{o} \arrow[dl,"\wedge"]  \\
{\mathscr{S}(\R)}^\infty_{e}  \arrow[r,"\mathfrak{m}^{-1}"]  &  {\mathscr{S}(\R)}^\infty_{e}&
\end{tikzcd}
\]

Thus $g \in \mathscr{S}^2(S)^\infty_{o}$ with \begin{equation} \label{euclidean_large_freq_lemma_eq1}
\widehat{g}(\lambda)=\mathscr{F}\left(\mathcal{M}^{-1} f\right)(\lambda)=\frac{\lambda^{n-1}}{{|{\bf c}(\lambda)|}^{-2}}  \mathscr{F}f(\lambda)\:.
\end{equation}
So in particular, $Supp(\widehat{g}) \subset (1,\infty)$.

\medskip

We also note that using the hypothesis, a computation similar to Lemma \ref{curve_growth_lemma} implies that $\|\gamma_s(t)\| < \min\{\varepsilon,R_0\}$, for all $s \in [0,\delta_2)$ and $t \in [-T,T]$. Hence by (\ref{local_isometry}), we see that
\begin{equation} \label{euclidean_large_freq_lemma_eq2}
d\left(e,{\left(\mathcal{E}_* \gamma\right)}_s(t)\right) = \|\gamma_s(t)\| < \min\{\varepsilon,R_0\}\:.
\end{equation} 
 
Then for $s \in (\delta_1,\delta_2)$, using Lemmata \ref{pushfwd_lemma} and \ref{curve_growth_lemma}, the local growth asymptotics of the density function of $S$ (\ref{density_function}), the relationship between $\widehat{g}$ and $\mathscr{F}f$ (\ref{euclidean_large_freq_lemma_eq1}) and the isometric relation (\ref{euclidean_large_freq_lemma_eq2}), we obtain
\begin{eqnarray*}
\left|T_{\mathcal{E}_* \gamma,4}\:g(s)\right| 
& \asymp & \left|\int_1^\infty \J_{\frac{n-2}{2}}\left(\lambda\: d\left(e,{\left(\mathcal{E}_* \gamma\right)}_s(t(s))\right)\right)\:e^{it(s)\lambda^2}\:\widehat{g}(\lambda)\: {|{\bf c}(\lambda)|}^{-2}\: d\lambda \right| \\
&=& \left|\int_1^\infty \J_{\frac{n-2}{2}}\left(\lambda\: \|\gamma_s(t(s))\|\right)\:e^{it(s)\lambda^2} \mathscr{F}f(\lambda)\: \lambda^{n-1}\: d\lambda \right| \\
&=& \left|\tilde{T}_\gamma f(s)\right|\:.
\end{eqnarray*}
Thus again by the local growth asymptotics of the density function of $S$, it follows that
\begin{equation} \label{euclidean_large_freq_lemma_eq3}
{\left(\bigintssss_{\delta_1}^{\delta_2} {\left| T_{\mathcal{E}_* \gamma,4}\: g(s)\right|}^2 A(s)\:ds\right)}^{1/2} \asymp {\left(\bigintssss_{\delta_1}^{\delta_2} {\left|\tilde{T}_\gamma f(s)\right|}^2 \:s^{n-1}\:ds\right)}^{1/2}\:.
\end{equation}
Using (\ref{euclidean_large_freq_lemma_eq1}) and the large frequency asymptotics of ${|{\bf c}(\lambda)|}^{-2}$ (\ref{plancherel_measure}), we also compare the Homogeneous Sobolev norms of $g$ and $f$:
\begin{eqnarray} \label{euclidean_large_freq_lemma_eq4}
{\|g\|}_{\dot{H}^{1/4}(S)} &=& {\left(\int_1^\infty \lambda^{1/2}\:{\left|\widehat{g}(\lambda)\right|}^2\: {|{\bf c}(\lambda)|}^{-2}\: d\lambda\right)}^{1/2} \nonumber\\
& = & {\left(\int_1^\infty \lambda^{1/2}\:{\left|\mathscr{F}f(\lambda)\right|}^2\:\frac{\lambda^{2(n-1)}}{{|{\bf c}(\lambda)|}^{-4}}\: {|{\bf c}(\lambda)|}^{-2}\: d\lambda\right)}^{1/2} \nonumber\\
& \asymp & {\left(\int_1^\infty \lambda^{1/2}\:{\left|\mathscr{F}f(\lambda)\right|}^2\:\lambda^{n-1}\: d\lambda\right)}^{1/2} \nonumber\\
&=& {\|f\|}_{\dot{H}^{1/4}(\R^n)}\:.
\end{eqnarray}
Then by Lemma \ref{pushfwd_lemma}, Corollary \ref{small_annuli_cor}, (\ref{euclidean_large_freq_lemma_eq3}) and (\ref{euclidean_large_freq_lemma_eq4}), the result follows.
\end{proof}
We now complete the proof of Theorem \ref{thm2}.
\begin{proof}[Proof of Theorem \ref{thm2}]
Let $f \in \mathscr{S}(\R^n)_{o}$. Then $\mathscr{F}f \in \mathscr{S}(\R)_{e}$. Now let us choose an auxiliary non-negative even function $\psi \in C^\infty_c(\R)$ such that $Supp(\psi) \subset \{\xi: \frac{1}{2} < |\xi| < 2\}$ and 
\begin{equation*}
\displaystyle\sum_{k=-\infty}^\infty \psi(2^{-k} \xi)=1\:,\: \xi \ne 0\:.
\end{equation*} 
We set
\begin{equation*}
\psi_1(\xi):= \displaystyle\sum_{k=-\infty}^0 \psi(2^{-k} \xi) \:,\: \text{ and } \psi_2(\xi):= \displaystyle\sum_{k=1}^\infty \psi(2^{-k} \xi) \:. 
\end{equation*}
Then both $\psi_1$ and $\psi_2$ are even non-negative $C^\infty$ functions with $Supp(\psi_1) \subset (-2,2)$, $Supp(\psi_2) \subset \R \setminus (-1,1)$ and $\psi_1+\psi_2 \equiv 1$. Then we consider,
\begin{equation*}
\mathscr{F}f = \left(\mathscr{F}f\right) \psi_1 + \left(\mathscr{F}f \right) \psi_2\:.
\end{equation*} 
As $\left(\mathscr{F}f\right) \psi_j$ for $j=1,2$, are even Schwartz class functions on $\R$, by the Schwartz isomorphism theorem, there exist $f_1,f_2 \in \mathscr{S}(\R^n)_{o}$ such that $\mathscr{F}f_j=\left(\mathscr{F}f\right) \psi_j$ for $j=1,2$. Then by Lemmata \ref{euclidean_small_freq_lemma} and \ref{euclidean_large_freq_lemma}, we get the desired estimate on the linearized maximal function,
\begin{eqnarray*}
&&{\left(\bigintssss_{\delta_1}^{\delta_2} {\left|\tilde{T}_\gamma f(s)\right|}^2 \:s^{n-1}\:ds\right)}^{1/2} \\
&\le & {\left(\bigintssss_{\delta_1}^{\delta_2} {\left|\tilde{T}_\gamma f_1(s)\right|}^2 \:s^{n-1}\:ds\right)}^{1/2} + {\left(\bigintssss_{\delta_1}^{\delta_2} {\left|\tilde{T}_\gamma f_2(s)\right|}^2 \:s^{n-1}\:ds\right)}^{1/2} \\
& \lesssim & {\left(\int_0^2 \lambda^{1/2}\:{\left|\mathscr{F}f_1(\lambda)\right|}^2\:\lambda^{n-1}\: d\lambda\right)}^{1/2} + {\left(\int_1^\infty \lambda^{1/2}\:{\left|\mathscr{F}f_2(\lambda)\right|}^2\:\lambda^{n-1}\: d\lambda\right)}^{1/2} \\
& \le & 2 \:{\|f\|}_{\dot{H}^{1/4}(\R^n)}\:.
\end{eqnarray*}
\end{proof}

\begin{proof}[Proof of Theorem \ref{thm3}]
By arguments mentioned in the Introduction, in order to prove the result on almost everywhere pointwise convergence, it suffices to prove that given any $0<r_1<r_2<\infty$, there exists some fixed $T>0$, such that one has for all $f \in \mathscr{S}(\R^n)_o$, the maximal estimate,
\begin{equation} \label{thm3_pf_eq1}
{\left(\bigintssss_{r_1}^{r_2} {\left(\displaystyle\sup_{t \in [-T,T]}\left|\tilde{S}_t f(\|\gamma_s(t)\|)\right|\right)}^2 \:s^{n-1}\:ds\right)}^{1/2} \lesssim \: {\|f\|}_{\dot{H}^{1/4}(\R^n)}\:,
\end{equation}
where the implicit constant can possibly depend only on $r_1,r_2$ and $n$. We first note that by compactness of $[0,r_2]$, there exists  $0<T< {\left(r_1/2C_7\right)}^2$, such that
\begin{equation*}
\|\gamma_s(t)\|=s+C_7t^{1/2}\:,
\end{equation*}
for all $s \in [0,r_2)$ and $t \in [-T,T]$. Then for all $s \in [0,r_2)$ and $t,t' \in [-T,T]$,
\begin{equation*}
\left|\|\gamma_s(t)\|-\|\gamma_s(t')\|\right| = C_7 |t^{1/2}- {t'}^{1/2}| \le C_7 {|t- {t'}|}^{1/2} \:.
\end{equation*} 
Also for all $s,s' \in [0,r_2)$ and $t \in [-T,T]$,
\begin{equation*}
\left|\|\gamma_s(t)\|-\|\gamma_{s'}(t)\|\right| = |s- s'| \:.
\end{equation*}
Therefore, $\gamma$ satisfies $(\mathscr{H}_3)$ for $\alpha=1/2$ and $(\mathscr{H}_4)$ for $s,s' \in [0,r_2)$ and $t,t' \in [-T,T]$, for some fixed $0<T< {\left(r_1/2C_7\right)}^2$.

Let $\delta \in (0,r_2)$ be sufficiently small and set $\eta=r_2/\delta$ and $\delta'=r_1/\eta$. For $f \in \mathscr{S}(\R^n)_o$, we consider 
\begin{equation*}
f_\eta(x):= f(\eta x)\:.
\end{equation*}
Now the solutions of the Schr\"odinger equation with  initial data $f$ and $f_\eta$ are related as follows: 
\begin{eqnarray*}
\tilde{S}_t f(\|\gamma_s(t)\|) &=&\int_0^\infty \J_{\frac{n-2}{2}}\left(\lambda \|\gamma_s(t)\|\right) e^{it\lambda^2}\:\mathscr{F}f(\lambda)\:\lambda^{n-1}\:d\lambda \\
&=& \eta^n \int_0^\infty \J_{\frac{n-2}{2}}\left(\lambda \|\gamma_s(t)\|\right) e^{it\lambda^2}\:\mathscr{F} f_\eta(\eta \lambda)\:\lambda^{n-1}\:d\lambda \\
&=& \int_0^\infty \J_{\frac{n-2}{2}}\left(\lambda \frac{\|\gamma_s(t)\|}{\eta}\right) e^{i\left(\frac{t}{\eta^2}\right)\lambda^2}\:\mathscr{F}f_\eta(\lambda)\:\lambda^{n-1}\:d\lambda \:.
\end{eqnarray*}
Now for $s \in [0,r_2)$ and $t \in [-T,T]$, we have 
\begin{equation*}
\frac{\|\gamma_s(t)\|}{\eta} = \frac{s}{\eta}+C_7 \frac{t^{1/2}}{\eta}= \frac{s}{\eta}+C_7 {\left(\frac{t}{\eta^2}\right)}^{1/2} = \|\gamma_{s/\eta}(t/\eta^2)\|\:.
\end{equation*}
Therefore, for $s \in [0,r_2)$ and $t \in [-T,T]$,
\begin{eqnarray*}
\tilde{S}_t f(\|\gamma_s(t)\|) &=& \int_0^\infty \J_{\frac{n-2}{2}}\left(\lambda \|\gamma_{s/\eta}(t/\eta^2)\|\right) e^{i\left(\frac{t}{\eta^2}\right)\lambda^2}\:\mathscr{F}f_\eta(\lambda)\:\lambda^{n-1}\:d\lambda \\
&=& \tilde{S}_{t/\eta^2} f_\eta(\|\gamma_{s/\eta}(t/\eta^2)\|)\:.
\end{eqnarray*}
Then as $T< {\left(r_1/2C_7\right)}^2$, we have 
\begin{equation*}
{\left(\frac{T}{\eta^2}\right)}^{1/2} < \frac{\delta'}{2C_7}\:,
\end{equation*}
and hence by Theorem \ref{thm2},
\begin{eqnarray*}
&&{\left(\bigintssss_{r_1}^{r_2} {\left(\displaystyle\sup_{t \in [-T,T]}\left|\tilde{S}_t f(\|\gamma_s(t)\|)\right|\right)}^2 \:s^{n-1}\:ds\right)}^{1/2} \\
&=& {\left(\bigintssss_{r_1}^{r_2} {\left(\displaystyle\sup_{t \in [-T,T]}\left|\tilde{S}_{t/\eta^2} f_\eta(\|\gamma_{s/\eta}(t/\eta^2)\|)\right|\right)}^2 \:s^{n-1}\:ds\right)}^{1/2} \\
&=& {\left(\bigintssss_{r_1}^{r_2} {\left(\displaystyle\sup_{t \in \left[-\frac{T}{\eta^2},\frac{T}{\eta^2}\right]}\left|\tilde{S}_{t} f_\eta(\|\gamma_{s/\eta}(t)\|)\right|\right)}^2 \:s^{n-1}\:ds\right)}^{1/2} \\
&=& \eta^{n/2}\: {\left(\bigintssss_{\delta'}^{\delta} {\left(\displaystyle\sup_{t \in \left[-\frac{T}{\eta^2},\frac{T}{\eta^2}\right]}\left|\tilde{S}_{t} f_\eta(\|\gamma_{s}(t)\|)\right|\right)}^2 \:s^{n-1}\:ds\right)}^{1/2} \\
& \lesssim &  \eta^{n/2}\:{\|f_\eta\|}_{\dot{H}^{1/4}(\R^n)}\:.
\end{eqnarray*}
We next note that
\begin{eqnarray*}
{\|f_\eta\|}_{\dot{H}^{1/4}(\R^n)} &=& {\left(\int_0^\infty \lambda^{1/2}\:{\left|\mathscr{F} f_\eta(\lambda)\right|}^2 \lambda^{n-1}\:d\lambda\right)}^{1/2} \\
&=& \eta^{\left(\frac{1}{4}-\frac{n}{2}\right)} {\left(\int_0^\infty \lambda^{1/2}\:{\left|\mathscr{F}f(\lambda)\right|}^2 \lambda^{n-1}\:d\lambda\right)}^{1/2} \\
&=& \eta^{\left(\frac{1}{4}-\frac{n}{2}\right)} {\|f\|}_{\dot{H}^{1/4}(\R^n)}\:, 
\end{eqnarray*}
which we plug in the above to obtain
\begin{eqnarray*}
{\left(\bigintssss_{r_1}^{r_2} {\left(\displaystyle\sup_{t \in [-T,T]}\left|\tilde{S}_t f(\|\gamma_s(t)\|)\right|\right)}^2 \:s^{n-1}\:ds\right)}^{1/2} & \lesssim & \eta^{\frac{1}{4}} {\|f\|}_{\dot{H}^{1/4}(\R^n)}\:.
\end{eqnarray*}
Thus we obtain (\ref{thm3_pf_eq1}), which completes the proof.
\end{proof}

\section{Concluding remarks}
In this section, we make some remarks and pose some new problems:
\begin{enumerate}
\item On $\mathbb{H}^3(-1) \cong SL(2,\C)/SU(2)$, for $\beta >3/2$, by Sobolev imbedding \cite[Theorem 4.7]{CGM} and standard arguments, the non-tangential convergence to the initial data holds almost everywhere. Whereas by our counter-example (Theorem \ref{counter-example}), the non-tangential convergence to the initial data fails on a set of positive ($SL(2,\C)$-invariant) Riemannian measure if $\beta \le 1/2$. This gives rise to the natural question of what happens when $\beta \in (1/2,3/2]$. The analogue of Theorem \ref{counter-example} and the above question in the great generality of Damek-Ricci spaces turns out to be even more interesting.

\medskip

\item In this article, the two Euclidean results (Theorems \ref{thm2} and \ref{thm3}) only give partial analogues of Theorem \ref{thm1} for $\R^n$. Thus seeking a complete analogue of Theorem \ref{thm1} for $\R^n$ is a natural question.

\end{enumerate}

\section*{Acknowledgements} 
The author would like to thank Prof. Swagato K. Ray for comments and suggestions. The author is supported by a research fellowship of Indian Statistical Institute.

\section*{Statements and Declarations}
{\bf Conflict of interest:} The author declares no conflict of interest related to this work.

\bibliographystyle{amsplain}

\end{document}